\newcommand\eps{\varepsilon}
\newcommand\del{\partial}
\renewcommand\ge{\geqslant}
\renewcommand\le{\leqslant}
\newcommand\abs[1]{\lvert #1 \rvert}
\newcommand\norm[1]{\| #1 \|}
\newcommand\scp[2]{\ensuremath{\langle #1 , #2 \rangle}}
\newcommand\R{\mathbb{R}}
\newcommand\N{\mathbb{N}}
\newcommand\C{\mathbb{C}}
\newcommand\Z{\mathbb{Z}}
\newcommand\mO{\mathcal{O}}
\newcommand\mS{\mathcal{S}}
\newcommand{\dd}{\mathop{}\!\mathrm{d}}
\newcommand\ce{\mathrel{\mathop:}=}
\DeclareMathOperator{\Id}{Id}
\DeclareMathOperator{\Vol}{Vol}
\DeclareMathOperator{\RealPart}{Re}
\renewcommand\Re{\RealPart}
\edef\int{\int\limits}
\edef\sum{\sum\limits}
\edef\prod{\prod\limits}
\edef\inf{\inf\limits}
\edef\sup{\sup\limits}
\numberwithin{equation}{section}
   \renewenvironment{abstract}{%
       \titlepage
       \null\vfil
       \@beginparpenalty\@lowpenalty
       \par\medskip\noindent{\bfseries\abstractname.}
         \@endparpenalty\@M
       }%
      {\par\vfil\null\endtitlepage}
   \renewenvironment{abstract}{%
       \if@twocolumn
         {\bfseries\abstractname.}%
       \else
         \quotation
         \small\noindent{\bfseries\abstractname.}
         \@endparpenalty\@M
       \fi}
       {\if@twocolumn\par\medskip\else\endquotation\fi}
    \renewcommand*{\@fnsymbol}[1]{\ensuremath{\ifcase#1\or \or \dagger\or \ddagger\or
        \mathsection\or \mathparagraph\or \|\or **\or \dagger\dagger
        \or \ddagger\ddagger \else\@ctrerr\fi}}
\theoremstyle{plain}
\newtheorem{theopr}{Theorem}
\newtheorem{theo}{Theorem}
\newtheorem{lemma}{Lemma}[section]
\newtheorem{prop}[lemma]{Proposition}
\newtheorem*{rem}{Remark}
\title{Dimension free $L^p$-bounds of maximal functions associated
  to products of Euclidean balls}
\author{Frederic Sommer\thanks{The author has been supported by
    the DFG grant DFG-Project MU 761/11-1}
}
\newcommand{\Addresses}{{% additional braces for segregating \footnotesize
  \bigskip
  \footnotesize

  \textsc{Mathematisches Seminar, Christian-Albrechts-Universität zu Kiel,
    Ludewig-Meyn-Str. 4, 24118 Kiel, Germany}\par\nopagebreak
  \textit{E-mail address}: \texttt{sommer@math.uni-kiel.de}

  % \medskip

  % M.~Dane (Corresponding author), \textsc{Atmospheric Research Station,
  %   Pala Lundi, Fiji}\par\nopagebreak
  % \textit{E-mail address}, M.~Dane: \texttt{DaneMark@@ffr.choice}

  % \medskip

  % J.~Jones, \textsc{Department of Philosophy, Freedman College,
  %   Periwinkle, Colorado 84320}\par\nopagebreak
  % \textit{E-mail address}, J.~Jones: \texttt{id739e@@oseoi44 (Bitnet)}

}}
\begin{document}

\maketitle

\begin{abstract}
A few years ago, Bourgain proved that the centered Hardy-Littlewood
maximal function for the cube has dimension free $L^p$-bounds for
$p>1$. We extend his result to products of Euclidean balls of
different dimensions. In addition, we provide dimension free
$L^p$-bounds for the maximal function associated to products of
Euclidean spheres for $p > \frac{N}{N-1}$ and $N \ge 3$, where $N-1$
is the lowest occurring dimension of a single sphere. The
aforementioned result is obtained from the latter one by applying the
method of rotations from Stein’s pioneering work on the spherical
maximal function. 
\end{abstract}
\tableofcontents

%\vspace{1cm}
\section{Introduction}
\label{part1}
For any convex body $B$ and any $f \in L^1_{\text{loc}}(\R^n)$,
denote the centered Hardy-Littlewood maximal function of $f$
associated to $B$ by
\begin{equation*}
  M_Bf(x) := \frac1{|B|} \sup\limits_{t>0} \int_{B} |f(x+ty)| \dd y.
\end{equation*}
The history of dimension free bounds starts with the celebrated result
by Stein \cite{ste}, who discovered that for $1 < p \le \infty$, the
centered Hardy-Littlewood maximal operator associated to the Euclidean
ball in $\R^n$ has an $L^p$-bound that does not depend on the
dimension $n$. This result has been obtainend by averaging over
spheres, and using the $L^p$-boundedness of the spherical maximal 
operator for $n >2$ and $p > \frac{n}{n-1}$ (see also
\cite{stestr}).\\
One can ask whether this holds for any convex body $B$, and several
more results have been obtained since then. Bourgain \cite{bou1}
proved that the centered maximal operator is $L^2$-bounded 
independently of $n$ for any convex, centrally symmetric body in
$\R^n$. To do this, he showed that there are a unique linear map $A
\colon \R^n \to \R^n$ with $\det A = 1$ and a constant $L=L(B)$ such
that for every $\xi \in S^{n-1}$, we have
\begin{equation}
  \label{eq:isotropy}
  \int_{A(B)} |\scp x \xi|^2 \dd x = L(B)^2.
\end{equation}
If $B = A(B)$, we say that $B$ is in isotropic position, which
can always be assumed since $\| M_B \|_{p \to p} = \| M_{A(B)} \|_{p
  \to p}$. The $L^2$ result then follows from exploiting the decay of
the Fourier transform of $\chi_B$, also proven in
\cite{bou1}. Precisely, there is a universal constant $C$ such that
for every $B$ in isotropic position, we have the following estimates.
\begin{equation}
  \label{eq:bourgaindecay}
  |\widehat{\chi_B}(\xi)| \le C |L \xi|^{-1}, \quad |1 - \widehat{
    \chi_B}(\xi)| \le C | L \xi |, \quad |\scp{\xi}{\nabla
    \widehat{\chi_B}(\xi)}| \le C
\end{equation}
The $L^2$ result has been improved to the range
$p>\nicefrac32$ in \cite{bou3}, and independently by Carbery
\cite{car}. Going further, M\"uller \cite{mue} showed that for $p>1$,
we have $\| M_B \|_{p \to p} \le C(p, \sigma, Q)$, with some geometric
invariants $\sigma=\sigma(B)$ and $Q=Q(B)$. For isotropic $B$, these
are defined as 
\begin{equation*}
  %\label{eq:sigmab}
  \sigma(B)^{-1} = \max \{ \Vol_{n-1}(\{ x \in B : \scp x \xi = 0\}) :
  \xi \in S^{n-1} \},
\end{equation*}
\begin{equation*}
  %\label{eq:qb}
  Q(B) = \max \{ \Vol_{n-1}(\pi_\xi(B)) : \xi \in S^{n-1} \},
\end{equation*}
where $\pi_\xi$ is the projection onto the subspace orthogonal to
$\xi$. M\"uller also showed that if $B$ is an $\ell^q$-ball for some
$1 \le q < \infty $, then $\sigma(B)$ and $Q(B)$ do not depend on $n$,
bounding the corresponding maximal function independently of the
dimension. However, in the case $q=\infty $,
i.e. $B = \mathopen[ -\nicefrac12, \nicefrac12 \mathclose]^n$, we have
$Q(B) = \sqrt n$, and the problem remained open until
recently. However, in \cite{bou2}, Bourgain succeeded to show that
also in the case of the cube, there exists a dimension free bound. A
survey of all these results, with attention to further details, has
recently been published by Deleaval, Gu{\'e}don, and Maurey
\cite{dgm}. Following the latest result by Bougain, the purpose of
this work is to prove the following theorem. 
\begin{theo}
  \label{theo:one}
  Let $B \ce B_1 \times \cdots \times B_\ell$ be a direct product of
  $\ell \ge 1$ Euclidean balls $B_k$ in $\R^{n_k}$, $n_k \ge 1$, and
  put $n \ce n_1 + \ldots + n_\ell$. Assume further that $1 < p \le
  \infty $. Then 
  \begin{equation}
  \label{eq:theone}
    \norm{M_B f}_p \le C_p \norm{f}_p
  \end{equation}
  for every $f \in L^p(\R^n)$, where $C_p$ depends only on $p$, but
  not on $\ell$ or the dimensions $n_k$ of the factors.
\end{theo}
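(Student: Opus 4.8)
The plan is to deduce Theorem~\ref{theo:one} from a dimension free bound for a spherical maximal operator attached to \emph{products} of spheres, via Stein's method of rotations, and then to recover the remaining exponents and the low-dimensional factors by a Fourier-analytic argument in the spirit of Bourgain's proof for the cube. For the method of rotations, note that $M_B$ is unaffected by translating each $B_k$ to be centred at the origin; write $B_k=\rho_k\B^{n_k}$ and pass to polar coordinates in every factor. Because $\int_0^1 n_k r^{n_k-1}\dd r=1$ and $\abs{S^{n_k-1}}=n_k\abs{\B^{n_k}}$, the integration in the radial and angular variables is against a probability measure, so pointwise
\begin{equation*}
  M_Bf(x)\le\mathcal M^{\mathrm{sph}}f(x)\ce\sup_{s_1,\dots,s_\ell>0}\ \frac1{\prod_k\abs{S^{n_k-1}}}\int_{\prod_k S^{n_k-1}}\bigl\lvert f(x_1+s_1\omega_1,\dots,x_\ell+s_\ell\omega_\ell)\bigr\rvert\,\dd\sigma(\omega),
\end{equation*}
and the radii $\rho_k$ are absorbed into the supremum. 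Hence $\norm{M_Bf}_p\le\norm{\mathcal M^{\mathrm{sph}}f}_p$, and everything reduces to a dimension free and $\ell$-free $L^p$-bound for the multiparameter product spherical maximal operator $\mathcal M^{\mathrm{sph}}$ — the spherical result announced in the abstract. I would prove that by adapting Stein's $g$-function argument: split the averaging operators dyadically in each of the $\ell$ dilation parameters on the Fourier side, use factorwise that the Fourier transform of the normalised surface measure on $S^{m-1}$ decays like $(1+\abs\eta)^{-(m-1)/2}$, and run a square function estimate via Plancherel and a Sobolev embedding in the dilation variables. The essential discipline is that every constant be tracked only through the slowest decay exponent — that is, through $N\ce\min_k n_k$ via $(N-1)/2$ — and through $p$, never through $\ell$ or the individual $n_k$: a naive tensorisation of the one-sphere spherical maximal bound costs a factor $C_p^{\ell}$ and is useless. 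This forces $N\ge 3$ (decay rate above $\tfrac12$) and $p>\tfrac N{N-1}$; interpolation with the trivial $L^\infty$-bound then gives $p\in(\tfrac N{N-1},\infty]$, with a restricted-type refinement covering the exponents just above $\tfrac N{N-1}$.

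It remains to treat $p\in(1,\tfrac N{N-1}]$, and the factors with $n_k\in\{1,2\}$; here I would work on the Fourier side, generalising Bourgain \cite{bou2} (of which the cube is the special case of all-$1$-dimensional factors). Write $\phi=\chi_B/\abs B$, so that $M_Bf=\sup_{t>0}\abs{\phi_t*f}$ with $\phi_t(y)=t^{-n}\phi(y/t)$, and note $\widehat\phi(\xi)=\prod_k\widehat{\chi_{B_k}}(\xi_k)$ with each $B_k$ in isotropic position. Because the $B_k$ are centrally symmetric, $\widehat{\chi_{B_k}}$ is real and even, the first-order term in \eqref{eq:bourgaindecay} vanishes, and one has the sharpened factorwise bounds $\abs{\widehat{\chi_{B_k}}(\xi_k)}\le C\min(1,\abs{L_k\xi_k}^{-1})$ and $\abs{1-\widehat{\chi_{B_k}}(\xi_k)}\le C\min(1,\abs{L_k\xi_k}^2)$, with $C$ absolute and $L_k=L(\B^{n_k})$ bounded above and below uniformly in $n_k$. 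Multiplying out,
\begin{equation*}
  \abs{1-\widehat\phi(\xi)}\le\sum_k\abs{1-\widehat{\chi_{B_k}}(\xi_k)}\lesssim\sum_k\min(1,\abs{L_k\xi_k}^2)\lesssim\min(1,\abs\xi^2),
\end{equation*}
and likewise $\abs{\scp\xi{\nabla\widehat\phi(\xi)}}\le\sum_k\abs{\scp{\xi_k}{\nabla\widehat{\chi_{B_k}}(\xi_k)}}\prod_{j\ne k}\abs{\widehat{\chi_{B_j}}(\xi_j)}\le C$, both dimension and $\ell$ free. Feeding these into Bourgain's Littlewood--Paley scheme — the near-critical-scale part handled by the $g$-function built from $\scp\xi{\nabla\widehat\phi(\xi)}$, the high-frequency tail by the decay of $\widehat\phi$ — gives $\norm{M_Bf}_2\le C\norm f_2$, and the refinements of Bourgain and Carbery \cite{bou3,car} push this to $\norm{M_Bf}_p\le C_p\norm f_p$ for all $p>1$, uniformly; the $2$-dimensional factors are harmless since the estimates above do not see $n_k$. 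Interpolating with the spherical range of the first paragraph then yields \eqref{eq:theone}.

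The main obstacle, common to both steps, is $\ell$-uniformity: the spherical $g$-function estimate and the Fourier estimate each have to be arranged so that assembling $\ell$ factors costs a constant independent of $\ell$, which is feasible only because one never localises to a single factor (whence no $C_p^\ell$) and because the underlying decay and Taylor estimates are genuinely dimension free — the latter resting on the central symmetry of the $B_k$, which is exactly what fails for general convex bodies. The numerology $N\ge3$, $p>\tfrac N{N-1}$ in the rotations step is precisely the threshold at which the factorwise surface-measure decay $(N-1)/2$ is strong enough to survive the square function, so that the complementary range and the low-dimensional factors genuinely require the harder Bourgain-type argument rather than rotations.
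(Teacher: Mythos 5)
Your high-level plan --- spheres plus rotations for the high-dimensional factors in a restricted $p$-range, a Bourgain-for-the-cube-type Fourier argument for the remaining factors and exponents --- is the right one and does match the paper, but both legs of the reduction have genuine gaps.

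First, the pointwise inequality $M_B f \le \mathcal M^{\mathrm{sph}} f$ is arithmetically correct but proves too little, because the operator on the right has $\ell$ \emph{independent} dilation parameters $s_1,\dots,s_\ell$. That is a multi-parameter (product) spherical maximal operator, a genuinely harder object: your proposed $g$-function argument would require dyadic decomposition and a Sobolev gain in $\ell$ independent dilation variables, and there is no indication that such an estimate can be closed with a constant independent of $\ell$. The paper explicitly points out that a pointwise bound $M_Bf\le M_Sf$ is \emph{not} available for $\ell>1$ and works around it: in Lemma~\ref{lemma:mbms} one pulls the single supremum $\sup_{t>0}$ \emph{inside} the integral over the radial variables $s\in[0,1]^\ell$, so that for each fixed $s$ one gets a \emph{single-parameter} spherical operator $M_{S^{n_1-1}_{s_1}\times\cdots\times S^{n_\ell-1}_{s_\ell}}$, and then Minkowski's integral inequality (not a pointwise bound) gives $\|M_Bf\|_p\le\|M_Sf\|_p$. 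Your reduction forfeits the single-parameter structure on which the Carbery-type interpolation of Section~\ref{part4} (fractional derivatives $(\scp\xi\nabla)^z m$, analytic family $m_z$, Stein interpolation) depends.

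Second, the assertion that ``the refinements of Bourgain and Carbery push this to $\|M_Bf\|_p\le C_p\|f\|_p$ for all $p>1$, uniformly'' is false: \cite{bou3} and \cite{car} give $p>\tfrac32$ for arbitrary symmetric convex bodies, and the Fourier bounds you write down ($|1-\widehat\phi|\lesssim\min(1,|\xi|^2)$, $|\scp\xi{\nabla\widehat\phi}|\le C$) are exactly what underlies that result and no more. Reaching $p>1$ is precisely the hard part: it is all of Section~\ref{part2} of the paper, which adapts Bourgain's cube argument (Lemmas~\ref{lemma:lemma3}--\ref{lemma:lemma9}) and lands on a constant $C_{p,N}$ that depends on $N=\max_k n_k$, not only on $p$. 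This is also why your remark that ``the $2$-dimensional factors are harmless since the estimates above do not see $n_k$'' misses the point: for $p$ near $1$ the argument very much sees the factor dimensions. What saves the paper is the split $B=B'\times B''$ in Section~\ref{part3}: set $N_0=\lceil p/(p-1)\rceil$, put the factors with $n_k\ge N_0$ into $B'$ (treated by rotations and Theorem~\ref{theo:twopr}, where now $p>\tfrac{N_0}{N_0-1}$), and put the factors with $n_k<N_0$ into $B''$, where Proposition~\ref{prop:bouN} applies with $\max n_k\le N_0-1$ so that $C_{p,N_0-1}$ depends only on $p$. You gesture at this split (``low-dimensional factors genuinely require the harder Bourgain-type argument''), but without the observation that the Bourgain-type constant depends on the maximal factor dimension, and without the $p$-dependent threshold $N_0$ that caps that dimension, the two pieces do not assemble into an $\ell$-free bound.
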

We will see that, similarly to the case of the cube, the invariant
$Q(B)$ from \cite{mue} grows like $\sqrt \ell$, so that we cannot make
use of M{\"u}ller's bounds.\\
An outline of our approach goes as follows. A na{\"i}ve way would be
to estimate 
\begin{IEEEeqnarray*}{rCl}
  M_Bf(x) &\leq& \frac1{|B|} \sup_{t_1, \ldots, t_\ell>0}
  \int_{B_\ell} 
  \cdots \int_{B_1} |f(x + (t_1y^{(1)}, \ldots, t_\ell y^{(\ell)}))|
  \dd y^{(1)} \ldots \dd y^{(\ell)} \\
  &\le& \frac1{|B_\ell|} \sup_{t_\ell > 0} \int_{B_\ell} \cdots
  \frac1{|B_1|} \sup_{t_1 > 0} \int_{B_1} |f(x + (t_1y^{(1)}, \ldots,
  t_\ell y^{(\ell)}))| \dd y^{(1)} \ldots \dd y^{(\ell)},
\end{IEEEeqnarray*}
getting iterated maximal functions, where we write $x = (x^{(1)},
\ldots, x^{(\ell)})$ with $x^{(k)} \in \R^{n_k}$. Due to Stein's
dimension free bound for the Euclidean ball, we can estimate each
iterated maximal function to get 
\begin{equation}
  \label{eq:uselessbound}
  \| M_Bf \|_p \le C_p^\ell \| f \|_p
\end{equation}
for $1 < p < \infty$. Let $B^{(N)}$ be the Euclidean ball in $\R^N$
with Lebesgue measure $1$. Since the Fourier transform satisfies
$\chi_{B^{(N)}} = \mO(|\xi|^{-\frac{N+1}2})$ as $|\xi| \to \infty$,
we see that the Fourier transform of $\chi_B$ has a decay of at least
this rate (with $N = \min\limits_{1 \le k \le \ell} n_k$) on certain
subspaces of $\R^n$, while the decay is even better elsewhere,
behaving in a way very similar to the cube. Hence, in order to show
Theorem \ref{theo:one}, we shall make use of some of the central
arguments in Bourgain's approach for the cube to attain a bound that
depends on $p$ and $\max\limits_{1 \le k \le \ell} n_k$, but not on
$\ell$. From here on, we aim to combine this with
\eqref{eq:uselessbound} to achieve a bound as in Theorem
\ref{theo:one}. For this, we will provide a similar theorem for
spheres. Let $S \ce S_1 \times \cdots \times S_\ell$ be a product of
Euclidean spheres $S_k$ in $\R^{n_k}$. Let $\sigma_S$ be the product
of the spherical Lebesgue measures of the $S_k$ and define the maximal operator
$M_S$ on $\mS(\R^n)$ by 
\begin{equation*}
  M_S f(x) \ce \frac1{|S|} 
  \sup\limits_{t>0} \int_{S} |f(x + t \omega)| \dd \sigma_S(\omega).
\end{equation*}
Here, $|S|$ denotes the $(n - \ell)$-dimensional volume of $S$.
\begin{theo}
  \label{theo:two}
  Let $S$ be as above and $n_k \ge 3$ for each $k$. Put $N =
  \min\limits_{1 \le k \le \ell} n_k$ and assume that $p > \frac
  N{N-1}$. Then we have
  \begin{equation}
  \label{eq:thetwo}
    \norm{M_S f}_p \le C_p \norm{f}_p
  \end{equation}
  for every $f \in \mS(\R^n)$, where $C_p$ only depends on $p$.
\end{theo}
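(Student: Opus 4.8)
The plan is to adapt Stein's Fourier-analytic treatment of the spherical maximal function to the product setting, using a square-function/Littlewood--Paley argument combined with analytic interpolation of families of measures. First I would set up the multiparameter supremum: since $\omega \in S = S_1 \times \cdots \times S_\ell$ and the dilation is by a single parameter $t > 0$, the operator $M_S$ convolves $f$ with the normalized surface measure $\mu = \sigma_{S_1} \otimes \cdots \otimes \sigma_{S_\ell}$ on the product, dilated by $t$. The key object is the Fourier transform $\widehat{\mu}(\xi) = \prod_{k=1}^{\ell} \widehat{\sigma_k}(\xi^{(k)})$, where $\xi = (\xi^{(1)}, \ldots, \xi^{(\ell)})$ and $\widehat{\sigma_k}$ is (a Bessel function giving) the Fourier transform of normalized surface measure on $S_k \subset \R^{n_k}$. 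Each factor decays like $|\xi^{(k)}|^{-(n_k-1)/2}$, so $\widehat{\mu}$ decays at least like $\prod_k |\xi^{(k)}|^{-(n_k-1)/2}$; in the worst case where only one coordinate block is large this is $|\xi^{(k)}|^{-(N-1)/2}$, exactly the decay rate that governs the single-sphere maximal operator in $\R^N$.

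Next I would introduce, as in Stein--Wainger and Rubio de Francia, an analytic family of measures $\mu_z$ obtained by multiplying $\widehat{\mu}$ by a regularizing factor that kills the decay, say $\widehat{\mu_z}(\xi) = \psi(\xi)\, m_z(\xi)\, \widehat{\mu}(\xi) + (\text{smooth compactly supported piece})$, chosen so that at $z = 0$ one recovers $\mu$, and along $\Re z = 1$ the associated maximal operator $\sup_{t>0} |f * (\mu_z)_t|$ is $L^2$-bounded via the standard square-function estimate
\begin{equation*}
  \Bigl\| \sup_{t>0} |f * (\mu_z)_t| \Bigr\|_2^2 \le \int_0^\infty \Bigl\| \frac{d}{dt}\bigl(f * (\mu_z)_t\bigr) \Bigr\|_2 \, \bigl\| f * (\mu_z)_t \bigr\|_2 \, \frac{dt}{t},
\end{equation*}
which by Plancherel reduces to checking that $|\widehat{\mu_z}(\xi)|^2$ and $|\xi \cdot \nabla \widehat{\mu_z}(\xi)|\,|\widehat{\mu_z}(\xi)|$ are controlled uniformly. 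Here the product structure helps: one only needs the decay in the single block of largest norm, so the effective dimension is $N$, not $n$. Along $\Re z = -\varepsilon$ one needs the trivial $L^1 \to L^{1,\infty}$ (or $L^2$) bound for a maximal operator against a finite measure, with at most polynomial growth in $|\Im z|$. Stein's analytic interpolation theorem for the maximal operators then yields $L^p$-boundedness of $M_S = $ the $z=0$ operator for $p$ in the range dictated by where the interpolation line $z = 0$ sits between $\Re z = 1$ (giving $p = 2$, or better) and $\Re z = -\varepsilon$; optimizing gives exactly $p > \tfrac{N}{N-1}$.

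The main obstacle I anticipate is making the $L^2$ estimate along $\Re z = 1$ genuinely dimension-free: one must handle the regions of frequency space where several coordinate blocks $\xi^{(k)}$ are simultaneously of moderate size, and ensure that the decomposition into these regions (dyadic in each block) does not introduce an $\ell$-dependent constant when summed. The resolution is that in any such region the product decay $\prod_k |\xi^{(k)}|^{-(n_k-1)/2}$ is at least as strong as the single-block decay, so a single Littlewood--Paley decomposition adapted to $|\xi| = \max_k |\xi^{(k)}|$ suffices, and the near-origin behavior $|1 - \widehat{\mu}(\xi)| \lesssim |\xi|$ (together with $|\widehat{\mu}(\xi)| \le 1$) controls the low frequencies uniformly in $\ell$. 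A secondary technical point is that the dilation parameter $t$ is one-dimensional even though the measure lives on a product, so no multiparameter maximal theory is needed — the standard one-parameter square function argument applies verbatim, which is what keeps the constant $C_p$ independent of $\ell$.
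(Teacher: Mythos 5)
Your proposal follows the Stein--Wainger/Rubio de Francia philosophy (analytic family of measures, square-function estimate at one endpoint, trivial bound at the other, analytic interpolation), which is close in spirit to what the paper does, but it diverges structurally and has a real gap in the step you single out as the ``main obstacle.''

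Structurally, the paper does not attack mixed-dimension products directly. It first uses Stein's method of rotations (Lemma~\ref{lemma:stein}) to reduce Theorem~\ref{theo:two} to Theorem~\ref{theo:twopr}, the case $S = (S^{N-1}_R)^\ell$ in which every factor has the same lowest dimension $N-1$; only then does it run the interpolation. That reduction is not cosmetic: it is what lets all the later estimates be expressed in terms of a single scalar function $\tilde m = \widehat{\sigma_{S^{N-1}_R}}$ and its derivatives. Your proposal treats the general product and would need uniform-in-$k$ versions of every estimate. Also, the paper's interpolation is of Carbery type, estimating the fractional radial derivatives $(\scp{\xi}{\nabla})^z m$ of the \emph{multiplier} as $L^1$- and $L^2$-multipliers (via Lemmas~\ref{lemma:derisofsphere}--\ref{lemma:44}, Lemma~7.3--7.5 of \cite{dgm}), rather than introducing an analytic family of \emph{measures}; the range $p > \frac{N}{N-1}$ falls out of the explicit computation $\alpha = \frac{N-1}{2}\theta - \eps > \nicefrac1p$, not an unspecified optimization.

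The genuine gap is in your claimed resolution of the $\ell$-uniformity issue. You write that ``in any such region the product decay $\prod_k |\xi^{(k)}|^{-(n_k-1)/2}$ is at least as strong as the single-block decay, so a single Littlewood--Paley decomposition adapted to $|\xi| = \max_k |\xi^{(k)}|$ suffices.'' This is false in the relevant region. If all $\ell$ blocks satisfy $|\xi^{(k)}| \approx 1$, the factors $\widehat{\sigma_k}(\xi^{(k)})$ are \emph{not} in their asymptotic regime and individually exhibit no power decay at all; the quantity $\prod_k |\xi^{(k)}|^{-(n_k-1)/2}$ equals $1$ there and is not an upper bound for the product multiplier. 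The decay one actually needs in that region is the exponential smallness of a product of $\ell$ Bessel factors, each strictly less than $1$ in modulus by a fixed amount; this is precisely the content of Lemma~\ref{lemma:derisofsphere}(ii)--(iv), which establishes $|\tilde m(r\zeta)| \le e^{-c(2\pi R|\zeta|)^2}$ near the origin and $|\tilde m(r\zeta)| \le C' < 1$ on the moderate range, with explicit constants. Moreover, the $\sup$-norm decomposition you suggest is incompatible with the derivative bound: the radial derivative $\scp{\xi}{\nabla}\widehat{\mu}(\xi)$ is a sum over $k$ of terms that are each $O(|\xi^{(k)}|^2)$ near the origin, and its uniform bound relies on $\sum_k |\xi^{(k)}|^2 = |\xi|_2^2$ being the normalized Euclidean norm (this is exactly how Lemma~\ref{lemma:zweitletzt} obtains \eqref{eq:viertedgm}); if one normalizes by $\max_k |\xi^{(k)}|$ instead, that sum can be as large as $\ell$ and the constant in the derivative estimate is not dimension-free. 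So the obstacle you identify would not, in fact, be resolved by your suggested decomposition; the heavy lifting the paper does in Section 4 (the explicit Bessel manipulations and the careful ranges in Lemma~\ref{lemma:derisofsphere}, the Leibniz-splitting arguments in Lemma~\ref{lemma:44}) is irreducible.
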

As in the well-known case of $\ell=1$, the lower bound for $p$ in
Theorem \ref{theo:two} is optimal. We will prove Theorem
\ref{theo:two} by using Stein's approach to see that we can increase
the dimension of each factor of $S$ without increasing $C_p$. With
Stein's argument, we also get $\|M_B f\|_p \le \|M_S f\|_p$ if $B$ is
the convex hull of $S$, hence Theorem \ref{theo:two} is sufficient for
$N$ large enough, depending on a fixed value of $p$. To show Theorem
\ref{theo:two}, we proceed with applying an interpolation similar as
in Carbery's proof for $p > \nicefrac32$, in which he makes use of the
fact that $\scp{\xi}{\nabla \widehat {\chi_B}(\xi)}$ has bounded
$L^2$-multiplier norm for a general convex body $B$. In his final
remark, he states that if this derivative has bounded $L^q$-multiplier
norm for a bigger range of $q$, we would obtain a better bound on $p$
than $\nicefrac32$, which is the case if we can bound the higher
fractional derivatives 
\begin{equation*}
  \bigg(\frac{\dd}{\dd r}\bigg)^z \widehat{ \sigma_{S^{N-1}}}(r\xi)
\end{equation*}
of the Fourier transforms of spherical measures, where
$\Re(z)=\frac{N+1}2$. Bounding these uses several ideas from Müller's
proof, where he proceeded similarly for arbitrarily higher
derivatives to bound them in terms of certain geometric
invariants. Also, a lot of the calculations will rely on the explicit
forms and the decay of $\widehat{ \sigma_{S^{N-1}}}$ and $\widehat
{\sigma_{S}}$.\\
Hence, if we fix $p > 1$, we have attained a dimension free bound, for
$N$ large enough, by generalizing Stein's and Carbery's ideas. The
remaining cases are covered by our generalization of Bourgain's
arguments for the cube, achieving a bound only depending on the
finitely many remaining $N$ and thus only on $p$.\\\\
Throughout this paper, we write $B_r^N$ for the $N$-dimensional
Euclidean ball of radius $r$, and $B^N$ if $r=1$. We put $r_N \ce
\pi^{-\nicefrac12} \Gamma(\frac N2+1)^{\nicefrac1N}$. Similarly,
$S^{N-1}_r$ denotes the $(N-1)$-dimensional sphere of radius $r$ in
$\R^N$, with $S^{N-1}$ being the sphere of radius $1$. The Fourier
transform of $f \in L^1(\R^n)$ is written as
\begin{equation*}
  \hat f(\xi) = \int_{\R^n} e^{-2\pi i \scp x \xi} f(x) \dd x,
\end{equation*}
and it will also be used in the distributional sense.
% and $B^{(N)} \ce B_{r_N}^{(N)}$,
% which is the Euclidean ball of measure $1$. This notation is a bit
% unusual, but it will turn out more practical for our proof.
\section{Independence of the number of factors}
\label{part2}
This part is mainly a walkthrough of \cite{bou2}, where we will
omit any proof that does not need any further modification.\\
Let $B$, $\ell$, and $n$ be as in Theorem \ref{theo:one} and let
$m(\xi) = \widehat{\chi_B}(\xi)$. We group the variables by setting 
\begin{equation*}
  V_k \ce \bigg\{ \sum_{j=1}^{k-1} n_j + 1, \ldots, \sum_{j=1}^k n_j
  \bigg\}, \quad k \in \ito \ell.
\end{equation*}
The goal of this section is to show the following weaker result.
\begin{prop}
  \label{prop:bouN}
  Let $N \ce \max\limits_{1 \le k \le \ell} n_k$. Then
  \begin{equation}
    \label{eq:propboun}
    \| M_B f \|_p \le C_{p, N} \| f \|_p
  \end{equation}
  for every $f \in L^p(\R^n)$, $1 < p \le \infty$.
\end{prop}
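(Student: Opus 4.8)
The plan is to follow, essentially step by step, Bourgain's proof for the cube in \cite{bou2}, the product of balls being for this purpose the natural replacement of the cube (a product of intervals). First I would set up the multiplier and record the estimates it satisfies. Since $\norm{M_B}_{p\to p}$ is unchanged under an invertible linear change of the body, I may bring $B$ into isotropic position and, by an overall dilation, assume $\abs B=1$; because $B$ is a product of balls, these maps act on each block $V_k$ by a scalar comparable to $1$ uniformly in $\ell$ and the $n_k$, so $B$ stays a product of balls. Writing $\xi=(\xi^{(1)},\dots,\xi^{(\ell)})$ with $\xi^{(k)}\in\R^{n_k}$ indexed by $V_k$, the multiplier of the averaging operator $A_tf=\bigl(m(t\,\cdot)\hat f\bigr)^\vee$ factorises,
\begin{equation*}
  m(\xi)=\widehat{\chi_B}(\xi)=\prod_{k=1}^{\ell}\mu_k\bigl(\xi^{(k)}\bigr),\qquad
  \mu_k\bigl(\xi^{(k)}\bigr)=\abs{B_k}^{-1}\,\widehat{\chi_{B_k}}\bigl(\xi^{(k)}\bigr),
\end{equation*}
each $\mu_k$ being expressible through a Bessel function of order $n_k/2$ and satisfying, with a constant $C$ depending only on $N=\max_k n_k$: the near-origin bound $\abs{\mu_k(\eta)-1}\le C\abs\eta^2$ for $\abs\eta\le 1$; the decay $\abs{\mu_k(\eta)}\le C\abs\eta^{-(n_k+1)/2}\le C\abs\eta^{-1}$ for $\abs\eta\ge 1$; and the derivative bounds $\abs\eta\,\abs{\nabla\mu_k(\eta)}\le C$ and $\abs{\scp{\eta}{\nabla\mu_k(\eta)}}\le C$ (the last two being the block-analogues of \eqref{eq:bourgaindecay}, which itself applies to $m$ with an absolute constant since $B$ is isotropic and $\abs B=1$). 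The point is that each block $V_k$ plays the role that a single coordinate plays in \cite{bou2}, with Fourier decay that is in general even stronger, and with constants that carry a controlled dependence on $N$ instead of being absolute.

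With these inputs I would import the machinery of \cite{bou2}. A standard argument reduces $M_B$ to the dyadic maximal operator $\sup_{j\in\Z}\abs{A_{2^j}f}$, the intermediate scales being absorbed by a square function whose multiplier is $\scp{t\xi}{\nabla m(t\xi)}=\sum_k\scp{t\xi^{(k)}}{\nabla\mu_k(t\xi^{(k)})}\prod_{j\ne k}\mu_j(t\xi^{(j)})$; by \eqref{eq:bourgaindecay} this is bounded uniformly and, this being the whole point, independently of $\ell$. One then inserts a Littlewood--Paley decomposition in the frequency variable and treats, as in \cite{bou2}, the low-frequency part (where $m=1+\mO(\abs{2^j\xi}^2)$, compared against a smooth maximal operator that is $L^p$-bounded for $p>1$ with a dimension-free constant) and the high-frequency pieces $\sup_j\abs{A_{2^j}\Delta_i f}$, $i\ge 1$, separately. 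The high-frequency pieces are the substance: following \cite{bou2} I would partition each dyadic annulus $\{\abs{2^j\xi}\sim 2^i\}$ according to the set of blocks $k$ on which $\abs{2^j\xi^{(k)}}$ is large, use the block decay on those blocks and the near-origin expansion on the others, and sum the resulting geometrically small contributions over the families of large blocks and over $i$. It is precisely this summation that beats the $\sqrt\ell$-growth of the invariant $Q(B)$ (as noted after Theorem \ref{theo:one}), so that one does not merely recover the useless bound \eqref{eq:uselessbound}.

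I expect the main obstacle to lie in this last step once the blocks are heterogeneous. In \cite{bou2} every slot is an identical unit interval, whereas here the dimensions $n_k$ may be anything in $\{1,\dots,N\}$, so one must verify that Bourgain's combinatorial and numerical estimates still close when the factors $\mu_k$ are Bessel functions of different orders, and --- more delicately --- that the $N$-dependent constants attached to the individual $\mu_k$ do not compound when multiplied over all $\ell$ factors. I anticipate that they do not: in the decomposition above a block is estimated through its decay only while $\abs{2^j\xi^{(k)}}$ is large, and each such occurrence contributes a genuine negative power of that quantity, whereas a block with $\abs{2^j\xi^{(k)}}$ of order $1$ or smaller contributes a factor at most $1$; hence the products that arise stay uniformly summable and one obtains \eqref{eq:propboun} with $C_{p,N}$ independent of $\ell$.
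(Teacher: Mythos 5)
Your high-level strategy---adapt Bourgain's proof for the cube, treating each block $V_k$ as a single slot in the product and recording the block-wise Bessel estimates on the factor multipliers $\mu_k$---is exactly the route the paper takes. But you misidentify both where the difficulty lies and where the $N$-dependence enters, and you leave as ``imported machinery'' precisely the lemmas that need new proofs.

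First, the paper's opening reduction is not isotropic normalization but a reduction to the \emph{homogeneous} case $B=(B_{r_N}^N)^\ell$: group the factors by dimension, prove the bound for $(B_{r_j}^j)^{\ell_j}$ uniformly in $\ell_j$, and iterate over the at most $N$ values of $j$ as in \eqref{eq:uselessbound}. Once this is done, heterogeneity---where you say you ``expect the main obstacle to lie''---is gone before the hard analysis starts, and $L(B)=L(B_{r_N}^N)$ with $B$ automatically isotropic. (Incidentally, a heterogeneous product of unit-volume balls is generally \emph{not} isotropic, since $\sum_k L(B_k)^2\abs{\xi^{(k)}}^2$ is not a multiple of $\abs\xi^2$ when the $L(B_k)$ differ, so your ``overall dilation'' claim needs a block-diagonal map.)

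Second, the $N$-dependence does not compound through the product of Bessel decays---those decays only \emph{improve} with $n_k$. It enters through the translation-stability estimate in the adaptation of Bourgain's Lemma~9: $\Phi_{t_0}(x-y)\le C_N t_0^{-4N}\Phi_{t_0}(x)$ for $\abs y\le r_N$, which costs $R^{24N\eps}$ and produces the $R^{12N\eps}$ of Lemma~\ref{lemma:lemma3}. Third, ``partition each dyadic annulus according to the set of blocks on which $\abs{2^j\xi^{(k)}}$ is large, and sum the geometrically small contributions'' has the right picture but no mechanism: the number of such families is exponential in $\ell$, the ``projections'' you envisage are not $L^p$-bounded uniformly for free, and nothing obviously sums. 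Bourgain's actual device is the holomorphic family of operators $A_k$ of \eqref{eq:holo2}, built from tensor products of tent-function convolutions, whose uniform $L^q$-bound (Lemma~\ref{lemma:lemma5}) rests on Pisier's theorem for contractive semigroups---not on Littlewood--Paley theory---followed by a stochastic decoupling that collapses everything to $k=0,1$ and the square-function Lemmas~\ref{lemma:lemma7}--\ref{lemma:lemma9}. In the product-of-balls setting these require genuinely new ingredients (the surface-measure representation \eqref{eq:distderi} of $\del_j\chi_B$, the operators $\tau_j$, and the modified $L_k,L^{(k)}$ acting block-wise on $\R^N$), none of which ``import'' from the interval case.
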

It is enough to consider the case $B = (B_{r_N}^N)^\ell$: Since $\| M_B
\|_{p \to p}$ is invariant under linear 
transformations of $B$ (as already mentioned in \cite{bou1}), we can
assume that $|B_k| = 1$ for every $k \in \ito \ell $ (hence $B_k =
B_{r_{n_k}}^{n_k}$). By change of coordinates, we can assume $B =
\prod_{j=1}^N (B_{r_j}^{j})^{\ell_j}$ for certain $\ell_j \in \N$, where,
without loss of generality, we allow that $\ell_j=0$. Suppose that we
already found constants $C_{p, j}$ independent of $\ell_j$ such that
\begin{equation*}
  \| M_{(B_{r_j}^{j})^{\ell_j}} f \|_p \le C_{p, j} \| f \|_p.
\end{equation*}
Then we can argue similarly as in \eqref{eq:uselessbound} to get
\begin{equation*}
  \| M_B f \|_p \le \prod_{j=1}^\ell C_{p, j} \|f\|_p.
\end{equation*}
Let $B = (B_{r_N}^N)^\ell$, i.e. $n = N\ell$. Then $B$ is in isotropic
position, with 
\begin{IEEEeqnarray*}{rCl}
  L(B)^2
  = \int_B |\scp x \xi|^2 \dd x
  &=& \sum_{k=1}^\ell \int_B |\scp{x^{(k)}}{\xi^{(k)}}|^2 \dd x +
  \underbrace{\sum_{k, k'=1}^\ell \int_B \scp{x^{(k)}}{\xi^{(k)}}
  \scp{x^{(k')}}{\xi^{(k')}} \dd x}_{=0} \\
  &=& \sum_{k=1}^\ell \int_{B_{r_N}^N} |\scp{y}{\xi^{(k)}}|^2 \dd y \\
%  &=& \sum_{k=1}^\ell |\xi^{(k)}|^2 L(B_{r_N}^N)^2\\
  &=& L(B_{r_N}^N)^2
\end{IEEEeqnarray*}
for every $\xi \in S^{n-1}$, since $|B_{r_N}^N|=1$. Of course, any
Euclidean ball itself is in isotropic position due to rotation
invariance. We will first show that M{\"u}ller's bounds won't apply to
even this special case of our situation.
\begin{lemma}
  We have
  \begin{equation*}
    Q(B) = \sqrt \ell \cdot \pi^{-\nicefrac12} \frac{\Gamma(\nicefrac
      N2 + 1)}{\Gamma(\frac{N+1}2)} .
  \end{equation*}
\end{lemma}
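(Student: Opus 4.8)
The plan is to compute the shadow volume $\Vol_{n-1}(\pi_\xi(B))$ exactly for every $\xi \in S^{n-1}$ and then optimise over $\xi$. First I would exploit symmetry: since $B = (B_{r_N}^N)^\ell$ is invariant under the factorwise action of $O(N)^\ell \subset O(n)$ and $\pi_{g\xi}(gK) = g\,\pi_\xi(K)$ for every isometry $g$, the function $\xi \mapsto \Vol_{n-1}(\pi_\xi(B))$ is $O(N)^\ell$-invariant and hence depends on $\xi = (\xi^{(1)}, \dots, \xi^{(\ell)})$ only through the norms $a_k \ce \abs{\xi^{(k)}}$, which satisfy $\sum_{k=1}^\ell a_k^2 = 1$. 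So one may assume $\xi^{(k)} = a_k e$ for all $k$, with $e \in \R^N$ a fixed unit vector.

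Next I would evaluate the shadow by Cauchy's projection formula, $\Vol_{n-1}(\pi_\xi(B)) = \frac12 \int_{\partial B} \abs{\scp{\xi}{\nu_B(x)}} \dd \mathcal H^{n-1}(x)$, where $\nu_B$ is the outer unit normal, defined $\mathcal H^{n-1}$-almost everywhere. The boundary decomposes, up to an $\mathcal H^{n-1}$-null set, into the $\ell$ faces $F_k = (B_{r_N}^N)^{k-1} \times \partial B_{r_N}^N \times (B_{r_N}^N)^{\ell-k}$; on $F_k$ the outer normal has the block form $(0, \dots, 0, \nu, 0, \dots, 0)$ with $\nu \in S^{N-1}$, so that $\scp{\xi}{\nu_B} = a_k \scp{e}{\nu}$, while $\mathcal H^{n-1}$ restricted to $F_k$ is the product of the $(N-1)$-dimensional surface measure on $\partial B_{r_N}^N$ with the Lebesgue measures on the remaining $\ell-1$ ball factors. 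Integrating over $F_k$, using $\abs{B_{r_N}^N} = 1$ together with Cauchy's formula once more inside a single copy of $\R^N$ — namely $\frac12 \int_{\partial B_{r_N}^N} \abs{\scp{e}{\nu}} \dd \mathcal H^{N-1} = \abs{B_{r_N}^{N-1}}$, since the orthogonal shadow of a ball onto a hyperplane is the ball of the same radius one dimension lower — the $k$-th face contributes $a_k \abs{B_{r_N}^{N-1}}$, and summing gives $\Vol_{n-1}(\pi_\xi(B)) = \abs{B_{r_N}^{N-1}} \sum_{k=1}^\ell a_k$.

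It then remains to optimise. By the Cauchy--Schwarz inequality the maximum of $\sum_{k=1}^\ell a_k$ subject to $\sum_{k=1}^\ell a_k^2 = 1$ equals $\sqrt\ell$, attained at $a_k \equiv \ell^{-\nicefrac12}$, which corresponds to a unit vector $\xi$; hence $Q(B) = \sqrt\ell \, \abs{B_{r_N}^{N-1}}$. Substituting $\abs{B_{r_N}^{N-1}} = r_N^{N-1}\,\pi^{(N-1)/2}/\Gamma(\frac{N+1}{2})$ with $r_N = \pi^{-\nicefrac12}\,\Gamma(\frac N2 + 1)^{\nicefrac1N}$ and simplifying the Gamma factors then yields the stated value. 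The step I expect to need the most care is the middle one: one must make sure that Cauchy's formula and the product description of the surface measure really apply to the merely piecewise smooth boundary $\partial B$ — but the edges along which two faces $F_k, F_{k'}$ meet form a set of dimension at most $n-2$, hence are $\mathcal H^{n-1}$-negligible and contribute nothing. The symmetry reduction and the Cauchy--Schwarz optimisation are routine, and the growth $Q(B) \sim \sqrt\ell$ is exactly what prevents an application of M\"uller's criterion in the present situation.
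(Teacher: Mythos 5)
Your approach is essentially the same as the paper's: express the shadow volume by Cauchy's projection formula, split $\partial B$ into the $\ell$ spherical faces, reduce each face's contribution to a one-factor integral (by Fubini and $\abs{B_{r_N}^N}=1$), and optimise over $\xi$ by Cauchy--Schwarz.

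There is, however, a discrepancy in your final step that you should chase down. Your intermediate formula
\[
  \abs{\pi_\xi(B)} \;=\; \abs{B_{r_N}^{N-1}} \sum_{k=1}^\ell a_k,
  \qquad Q(B) \;=\; \sqrt\ell\,\abs{B_{r_N}^{N-1}},
\]
is correct: you use the honest outer unit normal $\omega/r_N$ on the spherical face. But substituting $\abs{B_{r_N}^{N-1}} = r_N^{N-1}\pi^{(N-1)/2}/\Gamma(\tfrac{N+1}{2})$ with $r_N = \pi^{-\nicefrac12}\Gamma(\tfrac N2+1)^{\nicefrac1N}$ gives
\[
  Q(B) \;=\; \sqrt\ell\cdot \frac{\Gamma\big(\tfrac N2+1\big)^{1-\nicefrac1N}}{\Gamma\big(\tfrac{N+1}{2}\big)},
\]
which is \emph{not} the value $\sqrt\ell\,\pi^{-\nicefrac12}\Gamma(\tfrac N2+1)/\Gamma(\tfrac{N+1}{2})$ asserted in the lemma --- the two differ by exactly the factor $r_N$. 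Your version is the one that passes the sanity checks: for $N=1$ it recovers the known cube value $Q=\sqrt\ell$ (the stated formula gives $\sqrt\ell/2$, contradicting the introduction's $Q=\sqrt n$ for the cube), and for $\ell=1$ it gives $Q=\abs{B_{r_N}^{N-1}}$, the correct $(N-1)$-volume of the shadow of a single ball. The paper's own proof passes from $\scp{n(x)}{\xi}$ to $\scp{\omega}{\xi^{(k)}}$ without dividing $\omega$ by its length $r_N$, which is where the spurious factor enters. So trust your middle step and redo the Gamma simplification honestly rather than forcing it to match the statement; the qualitative conclusion $Q(B)\sim\sqrt\ell$, and hence the inapplicability of M\"uller's criterion, is unaffected.
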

\begin{proof}
  Let $\xi \in S^{n-1}$. We will estimate $|\pi_\xi(B)|$, the
  $(n-1)$-dimensional volume of the orthogonal projection of $B$ onto
  $\xi^\perp$. The geometric arguments from \cite{bf} and a limiting
  argument show that
  \begin{equation*}
    |\pi_\xi(B)| = \int_{\del B} (\scp{n(x)}{\xi})_+ \dd \sigma(x),
  \end{equation*}
  where $\sigma$ is the Lebesgue surface measure of $\del B$ and
  $n(x)$ is the corresponding normal vector (well-defined
  $\sigma$-almost everywhere on $\del B$). Write
  \begin{equation*}
    \del B = \bigcup_{k=1}^\ell H_k, \quad H_k = (B^N_{r_N})^{k-1}
    \times S^{N-1}_{r_N} \times (B^N_{r_N})^{\ell-k-1}.
  \end{equation*}
  Then the $H_k$ are the pendants to the faces of a cube, and by
  Fubini and rotation invariance of the spherical measure, we get
  \begin{IEEEeqnarray*}{rCl}
    |\pi_\xi(B)|
    &=& \sum_{k=1}^\ell \int_{H_k} (\scp{n(x)}{\xi})_+ \dd \sigma(x) \\
    &=& \sum_{k=1}^\ell \int_{S^{N-1}_{r_N}} (\scp{\omega}{\xi^{(k)}})_+
    \dd \sigma_{S^{N-1}_{r_N}}(\omega) \\
    &=& \sum_{k=1}^\ell |\xi^{(k)}| \int_{\omega_1 > 0} \omega_1 \dd
    \sigma_{S^{N-1}_{r_N}}(\omega) \\
    &=& r_N |B^{N-1}_{r_N}| \sum_{k=1}^\ell |\xi^{(k)}|.
  \end{IEEEeqnarray*}
  We can maximize $|\pi_\xi(B)|$ by choosing $\xi^{(k)} =
  \ell^{-\nicefrac12} e_1 \in \R^N$. Since
  \begin{equation*}
    r_N |B^{N-1}_{r_N}| = r_N^N |B^{N-1}| = \pi^{-\nicefrac12}
    \frac{\Gamma(\nicefrac N2 + 1)}{\Gamma(\frac{N+1}2)},
  \end{equation*}
  the lemma follows.
\end{proof}
Consider the same decomposition as
in Bourgain's proof. Let $H \colon \R^n \to \R$ such that $\hat H(\xi)
= e^{-|\xi|^2}$ and put $\Omega^{(s)} = \chi_B \ast (H_{2^s} -
H_{2^{-s+1}})$ for $s \ge 1$. Then
\begin{equation*}
  % \label{eq:decomp}
  \chi_B = (\chi_B \ast H) + \sum_{s=1}^\infty \Omega^{(s)}.
\end{equation*}
Using only the
well-known estimates $|\xi||m(\xi)| < CL(B)^{-1} < C'$ and
$|\scp{\xi}{\nabla m(\xi)}| < C$ for general convex
bodies, Lemma 3  from \cite{bou1} and the exponential decay of $H$
give us (see (1.16)
in \cite{bou2}) 
\begin{equation*}
  % \label{eq:116b2}
  \big\|\sup_{t>0} |f\ast (\Omega^{(s)})_t| \big\|_2 < C
  2^{-\nicefrac s2}\|f\|_2
\end{equation*}
for $s \ge 1$ and
\begin{equation*}
  % \label{eq:116b222}
  \big\|\sup_{t>0} |f\ast (H \ast \chi_B)_t| \big\|_2 < C \|f\|_2.
\end{equation*}
For $1 < p < 2$, it suffices to find a bound
\begin{equation*}
  % \label{eq:chiasths}
  \big\| \sup_{t>0} | f \ast ( \chi_B \ast H_{2^{-s}} )_t | \big\|_p \le
  C_{p,s} \|f\|_p,
\end{equation*}
$s \in \N$, so that $C_{p,s}$ is suitable for interpolation with the
$L^2$-estimates. For this, Bourgain takes the ideas from \cite{mue} to
conclude that it suffices to find an $L^p$-bound for the operator $T$,
defined by
\begin{equation}
  \label{eq:lem2op}
  \widehat{Tf}(\xi) = \abs\xi m(\xi) e^{-4^{-s}|\xi|^2} \hat f(\xi),
\end{equation}
and to estimate
\begin{equation}
  \label{eq:l2bounds2}
  \sup_{\abs\xi = 1} \int_{\R^n} | \scp x \xi |^k (\chi_B \ast
  H_{2^{-s}}) (x) \dd x < C_k, \quad k\ge 1.
\end{equation}
For \eqref{eq:l2bounds2}, we can make use of the fact that $B$ is
symmetric in each coordinate, applying Khinchin's inequality as in
\cite[p.~279]{bou2}.\\
To estimate the operator $T$ in \eqref{eq:lem2op}, Bourgain uses a
duality argument as in \cite[p.~306]{mue} and Stein's dimension free bound on
the Riesz transforms (see \cite{ste}), which leaves him with proving
Lemma 3 from \cite{bou2}. In our situation, Proposition
\ref{prop:bouN} follows from the following, similar Lemma.
\begin{lemma}
  \label{lemma:lemma3}
  Let $N \ce \max\limits_{1 \le k \le \ell} n_k$. For $R\ge 2$ and $j
  \in \ito n$, let $\mu_j = \del_j (\chi_B \ast
  H_{\nicefrac1R})$. Then for every $2 \le p < \infty$, $0 < \eps <
  1$, and $f \in L^p$ we have 
  \begin{equation}
    \label{eq:lemma3}
    \bigg\| \bigg( \sum_{j=1}^n | f \ast \mu_j |^2
    \bigg)^{\nicefrac12} \bigg\|_p \le C_{p,\eps, N} R^{12N \cdot
      \eps} \| f \|_p, 
  \end{equation}
  with $C_{p,\eps, N}$ independent of $R$ and $\ell$.
\end{lemma}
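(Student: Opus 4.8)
The plan is to follow Bourgain's argument for the cube (\cite{bou2}, Lemma~3) as closely as possible, tracking how the single dimension $n$ there gets replaced by the pair $(N,\ell)$ in our product setting, and verifying that $\ell$ never enters the final constant. The starting point is the Littlewood--Paley / square-function machinery: after reducing to $B=(B_{r_N}^N)^\ell$ as in the text, write $\mu_j=\del_j(\chi_B\ast H_{1/R})$ and note that $\widehat{\mu_j}(\xi)=2\pi i\,\xi_j\, m(\xi)\,e^{-|\xi|^2/R^2}$, where $m=\widehat{\chi_B}$ factorizes as $m(\xi)=\prod_{k=1}^\ell \widehat{\chi_{B_{r_N}^N}}(\xi^{(k)})$. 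The key structural input is the decay \eqref{eq:bourgaindecay} applied factorwise, together with the sharper rate $\widehat{\chi_{B^{N}}}(\eta)=\mO(|\eta|^{-(N+1)/2})$; so on the block $V_k$ the symbol behaves like a smooth multiplier of order roughly $|\xi^{(k)}|^{-(N-1)/2}$, while across blocks the product structure yields cube-like behaviour. First I would split, as Bourgain does, according to which blocks $V_k$ carry a "large" portion of $\xi$: fix a threshold and separate the frequency space into regions where at most a controlled number of blocks are active.

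Second, on each piece I would run the vector-valued singular-integral estimate. The standard route is to show that the operator $f\mapsto(f\ast\mu_j)_{j=1}^n$, viewed as a map into $\ell^2(\{1,\dots,n\})$, has a Calderón--Zygmund kernel with constants controlled by $R^{CN\eps}$ after summing the Gaussian-truncated pieces dyadically in $R$; this is exactly where the "$12N\eps$" exponent is produced, by interpolating a crude $L^2$ bound (from Plancherel, using $\sum_j|\xi_j|^2|m(\xi)|^2 e^{-2|\xi|^2/R^2}\le C$) against a logarithmically-lossy $L^p$ bound obtained blockwise. The per-block analysis invokes Müller-type estimates \cite{mue}: the Fourier transform of $\chi_{B^N_{r_N}}$ and its derivatives decay with rates governed by the geometric invariants $\sigma$ and $Q$ of a \emph{single} Euclidean ball, which are dimension-independent in $N$ only up to the finitely many $N$ we have fixed — hence $C$ may depend on $N$ but the number $\ell$ of identical factors is irrelevant, since each factor contributes the \emph{same} bounded multiplier and one only multiplies finitely many "active" ones while the inactive blocks integrate out to $1$ in the relevant norm. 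The symmetry of $B$ in each coordinate lets us feed in Khinchin's inequality exactly as in \cite[p.~279]{bou2} to control the moments \eqref{eq:l2bounds2}, which is what makes the blockwise estimates summable.

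Third, I would assemble the pieces: the number of "active-block" configurations is controlled combinatorially, each configuration contributes a bound of the form $C_{p,N}R^{aN\eps}$, and choosing the splitting thresholds as powers of $R^\eps$ (à la Bourgain) makes the number of terms $\lesssim R^{\eps}$-bounded, so after summation one lands at $C_{p,\eps,N}R^{12N\eps}\|f\|_p$. The interpolation endpoints are the trivial $L^\infty\to L^\infty$-type bound (really an $\ell^2$-valued $L^2$ bound, extrapolated) and the $L^2$ bound from Plancherel; the Marcinkiewicz / Stein--Weiss vector-valued interpolation then gives the range $2\le p<\infty$.

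The main obstacle I expect is \textbf{bookkeeping the dependence on $N$ versus $\ell$ in the blockwise multiplier estimates} — precisely, showing that when several blocks $V_k$ are simultaneously "active" the product of their symbols does not accumulate an $\ell$-dependent constant. This is the analogue of the point in \cite{bou2} where the cube's $Q(B)=\sqrt n$ growth is absorbed; here $Q(B)=\sqrt\ell\cdot c_N$ (by the Lemma just proved), so a naive estimate would cost a power of $\ell$, and the whole content of the argument is to organize the sum over active blocks — using the $(N+1)/2$ decay on each active block plus Khinchin to handle the moment conditions uniformly — so that these powers of $\ell$ cancel against the combinatorial count, exactly as $\sqrt n$ is tamed in Bourgain's treatment of the cube. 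Everything else is a faithful, if lengthy, transcription of \cite{bou2} and \cite{mue} with $n$ replaced by $N$ on each factor.
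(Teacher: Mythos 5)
Your macro-strategy---adapt the proof of Bourgain's Lemma~3 from \cite{bou2}, tracking how a single dimension $n$ splits into the block size $N$ and the number of blocks $\ell$, and show $\ell$ drops out---is indeed what the paper does. But the micro-structure you sketch does not match the actual argument, and the steps you leave as ``faithful transcription'' are precisely the ones where the $\ell$-dependence has to be killed, so the proposal is a gap rather than a proof.

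Concretely: there is no Calder\'on--Zygmund / vector-valued singular-integral estimate anywhere in this part of Bourgain's argument, and ``Marcinkiewicz / Stein--Weiss vector-valued interpolation'' between an $L^2$ bound and an $L^\infty$-type bound is not what is used either. The actual engine is completely different. First, one proves the crude bound $\| (\sum_j |f\ast\mu_j|^2)^{1/2} \|_p \le C_p R^{1-2/p}\|f\|_p$ by direct interpolation. Second---and this is the step you omit entirely---one introduces a holomorphic semigroup of contractions via Pisier's theorem (here the blockwise operators $S_k=\prod_{j\in V_k}T_j$), builds the band operators $A_k$, and proves $\|A_k\|_{q\to q}\le C_q^k$ with $C_q$ independent of $k$ and $\ell$; this is what allows the decomposition $f=\sum_{k\le K}A_kf + g$ with $K$ chosen depending only on $p$ and $\eps$. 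Third, the sharp $L^2$ gain on $g$ uses a product-form exponential decay of $m$ (the paper's Lemma~\ref{lemma:lemma6}, which needs the pointwise estimates $|\widehat{\chi_{B_{r_N}^N}}(\zeta)|\le e^{-c|\zeta|^2}$ and $\le e^{-c'}$). Fourth, a stochastic decoupling reduces the $A_k$ terms to $k=0$ and $k=1$, and the $k=1$ case relies on the analogues of Bourgain's Lemmas~7--9, whose key device is the convolution with the polynomially decaying function $\varphi(x)=c/(1+x^4)$, exploiting that $\varphi$ is stable under small translations so that averaging over a window of size $t_0=R^{-3\eps}$ produces the controllable $R^{O(N\eps)}$ factors. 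Fifth and finally, a bootstrap in the optimal constant $B_p$ closes the argument and produces the exponent $12N\eps$. None of these mechanisms---Pisier semigroup bound, the band decomposition $A_k$, the translation-stable $\varphi$, the $B_p$ bootstrap---appear in your proposal.

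You also correctly single out the $Q(B)\sim\sqrt\ell$ growth as the quantitative danger, but you do not explain how it is defeated beyond saying the ``powers of $\ell$ cancel against the combinatorial count.'' They do not cancel combinatorially; they never appear, because the semigroup bound on $A_k$ and the product structure in Lemma~\ref{lemma:lemma6} are formulated so that each estimate depends on $K$ and $N$ but not on $\ell$. Until those steps are written out in the product-ball setting (in particular the per-block convolution operators and the translation-stability estimate for $\Phi_{t_0}$ on $\R^N$), the proposal does not establish the lemma.
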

Fix $2 \le p < \infty $, $R\ge 2$, and $0 < \eps < 1$. The direct
interpolation from \cite[p.~280]{bou2} shows that
\begin{equation}
  \label{eq:mujp}
  \bigg\| \bigg(\sum_{j=1}^n |f \ast \mu_j|^2 \bigg)^{\nicefrac12}
  \bigg\|_p
  \le C_p R^{1-\nicefrac2p} \| f \|_p.
\end{equation}
 We proceed
with Bourgain's Fourier localization. By means of Pisier's result
on contractive semigroups \cite[p.~390]{pis}, we get the following.
\begin{lemma}
  \label{lemma:lemma5}
  Let $\eta = (1 - |x|)_+$, $t>0$ and let $T_j\colon
  L^p(\R^n) \to L^p(\R^n)$ be the convolution by $\eta_t$ in the $j$-th
  variable. For every $k \in \{1,\ldots,\ell\}$ let
  \begin{equation*}
    S_k \ce \prod_{j \in V_k} T_j.
  \end{equation*}
  Furthermore, put for every $k \in \{0,\ldots,\ell\}$
  \begin{equation}
    \label{eq:holo2}
    A_k \ce \sum_{\substack{ A\subset \ito \ell\\ |A|=k}} \prod_{j \notin
      A} S_j \prod_{j \in A} (\Id - S_j).
  \end{equation}
  Then for $1 < q < \infty$, $\| A_k \|_{q\to q} \le C_q^k$ with $C_q$
  independent of $k$.
\end{lemma}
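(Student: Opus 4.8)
The plan is to reduce $A_k$ --- the sum over the $\binom\ell k$ subsets $A\subseteq\{1,\dots,\ell\}$ with $|A|=k$ of the commuting products $\prod_{j\in A}(\Id-S_j)\prod_{j\notin A}S_j$ --- to a ``level-$k$'' Fourier projection in a model situation, and then to bound the latter dimension-freely in $\ell$ by hypercontractivity; this is where Pisier's result on contractive semigroups \cite[p.~390]{pis} enters. First I would record the structural facts. As $\eta$ is a probability density on $\R$, the operator $T_j$ --- convolution in the $j$-th variable with the even kernel $\eta_t$ --- is a positive contraction on $L^q(\R^n)$ for every $1\le q\le\infty$ and is self-adjoint on $L^2$; hence each $S_k=\prod_{j\in V_k}T_j$ is a positive contraction on every $L^q$, is self-adjoint on $L^2$ with $0\le S_k\le\Id$, and $S_1,\dots,S_\ell$ commute, $S_k$ acting only on the block $V_k$. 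In particular, for every $k$ the operators $e^{-u(\Id-S_k)}=e^{-u}\sum_{m\ge0}\frac{u^m}{m!}S_k^m$, $u\ge0$, form a strongly continuous semigroup of positive contractions on $L^q$ (of norm $\le e^{-u}e^{u}=1$), self-adjoint on $L^2$. Writing $\delta_k\ce\Id-S_k$, the identity $\prod_{k=1}^\ell(S_k+z\delta_k)=\sum_{k=0}^\ell A_k z^k$ shows that $A_k$ depends on $(S_k)_k$ only through the ``Boolean'' pattern $\eins_{\{|A|=k\}}$.

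Next I would carry out the transference. Because each $S_k$ is a positive contraction and is a tensor factor (it equals $\Id$ off $V_k$), Akcoglu's dilation theorem, applied block by block --- dilate $S_k$ on $L^q(\R^{n_k})$ to an invertible positive isometry $U_k$ of a larger $L^q$-space --- yields a simultaneous dilation of the whole commuting family: there are norm-one maps $J,P$ with $PJ=\Id$, and, since in each product the factors $\Id-U_j$ ($j\in A$) and $U_j$ ($j\notin A$) act on disjoint tensor factors, $\prod_{j\in A}\delta_j\prod_{j\notin A}S_j=P\bigl(\bigotimes_{j\in A}(\Id-U_j)\otimes\bigotimes_{j\notin A}U_j\bigr)J$ for each such $A$. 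Summing over $|A|=k$ gives $A_k=P\,\widetilde A_k\,J$ with $\widetilde A_k$ built from the commuting isometries $U_k$, so $\|A_k\|_{q\to q}\le\|\widetilde A_k\|_{q\to q}$; by the Banach--Lamperti description of positive isometries of $L^q$ the $U_k$ are (weighted) shift-type operators, and a further transference reduces $\|\widetilde A_k\|_{q\to q}$ to the $L^q\to L^q$ norm of the level-$k$ Walsh--Fourier projection $P_k$ on the model probability space $\{0,1\}^{\ell}$ (equivalently $\mathbb T^\ell$).

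Finally, this last norm is bounded independently of $\ell$: by Pisier's result \cite[p.~390]{pis} --- the $K$-convexity of $L^q$ for $1<q<\infty$, equivalently the (hyper)contractivity of the Ornstein--Uhlenbeck noise semigroup --- one has, for $q\ge2$, $\|P_k f\|_q\le(q-1)^{k/2}\|P_k f\|_2\le(q-1)^{k/2}\|f\|_q$ (Bonami--Beckner), and the case $1<q<2$ follows by duality, so that $\|P_k\|_{q\to q}\le C_q^k$ with $C_q=(\max(q,q')-1)^{1/2}$ independent of $\ell$. Combining the three steps gives $\|A_k\|_{q\to q}\le C_q^k$ for $k\ge1$, while $\|A_0\|_{q\to q}=\|\prod_k S_k\|_{q\to q}\le1$. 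I expect the main obstacle to be the transference itself: carrying the level-$k$ combinatorial structure from the concrete family $(S_k)$ down to the model projection $P_k$ while keeping every constant independent of $\ell$ is precisely where Pisier's theorem on contractive semigroups is indispensable, whereas the dilation bookkeeping and the hypercontractivity estimate on the model are comparatively routine. (A more naive route --- extracting $A_k$ as a Cauchy integral of $z\mapsto\prod_k(S_k+z\delta_k)$ over a small circle about $0$ --- does not work, since for $q\ne2$ that operator-valued polynomial need not be bounded on $L^q$ uniformly in $\ell$ on any such contour.)
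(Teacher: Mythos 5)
Your route --- Akcoglu dilation, transference to the discrete cube, then Bonami--Beckner hypercontractivity for the level-$k$ Walsh projection --- is genuinely different from the paper's, but it has a real gap exactly where you anticipate one. Akcoglu's theorem dilates each positive contraction $S_j$ to an invertible positive $L^q$-isometry $U_j$, and by Banach--Lamperti each $U_j$ is a weighted composition operator; but such operators need not be involutions and carry no $\pm1$/Bernoulli structure, so there is no level-$k$ Walsh projection sitting inside $\widetilde A_k = \sum_{|A|=k}\prod_{j\in A}(\Id - U_j)\prod_{j\notin A}U_j$. The sentence claiming ``a further transference reduces $\|\widetilde A_k\|_{q\to q}$ to \dots the level-$k$ Walsh--Fourier projection'' is precisely where the argument breaks, and you flag it yourself as ``the main obstacle.'' Without that step the transference chain is not established, and the Bonami--Beckner endgame, though correct in the model, has nothing to apply to.

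Separately, your closing parenthetical inverts the role of Pisier's theorem. The paper gives no proof of this lemma: just as Bourgain does for his Lemma~5, it states the estimate ``by means of Pisier's result on contractive semigroups'' and moves on. The mechanism that the citation is invoking is exactly the Cauchy-integral route you dismiss. The operators $S_1,\dots,S_\ell$ are commuting positivity-preserving contractions on every $L^q$, self-adjoint with $0\le S_j\le\Id$ on $L^2$ (here because $\hat\eta\ge 0$: $\eta$ is the Fej\'er kernel), which is precisely the hypothesis of Pisier's theorem; that theorem furnishes a holomorphic extension of the analytic family $z\mapsto\prod_{j=1}^\ell\bigl(S_j + z(\Id - S_j)\bigr)=\sum_{k=0}^\ell z^k A_k$ with an $L^q$-operator bound that is \emph{uniform in $\ell$} on a suitable fixed region, and the estimate $\|A_k\|_{q\to q}\le C_q^k$ then drops out of Cauchy's formula. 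So it is not that ``the operator-valued polynomial need not be bounded uniformly in $\ell$ on any such contour''; it is that one needs Pisier's theorem to see that it \emph{is}, and that is the entire content of the lemma's (unwritten) proof.
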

Fix $t \ce R^{-\eps}$ and let $A_k$ be as in \eqref{eq:holo2}. Then
$A_k \ge 0$ and $\sum_{k=0}^\ell A_k = \Id $. For some $K \ge 1 $ to
be chosen later, we decompose $f \in L^p$ as
\begin{equation*}
  % \label{eq:fdecomp}
  f = \sum_{k=0}^K A_kf + g.
\end{equation*}
To achieve a good $L^2$-estimate on $\big( \sum_{j=1}^n | g \ast
\mu_j |^2 \big)^{\nicefrac12}$, we poof a variant of Lemma 6 in
\cite{bou2}, where we write $\xi = (\zeta_1, \ldots, \zeta_\ell)$
with each $\zeta_k \in \R^N$.
\begin{lemma}
  \label{lemma:lemma6}
  For every $\delta>0$ and $k \ge 1$, we have 
  \begin{equation}
    \label{eq:lemma6}
    |m(\xi)| \le C_{k, N} \bigg( 1 + \sum_{|\zeta_j| \le
      R^\delta} |\zeta_j|^2 \bigg)^{-\nicefrac k2} R^{\delta k}. 
  \end{equation}
\end{lemma}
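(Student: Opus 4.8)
The plan is to reduce \eqref{eq:lemma6}, which is the variant of Lemma~6 of \cite{bou2} announced above, to a single‑factor Gaussian majorant. Recall that here $B=(B^N_{r_N})^\ell$, so, writing $\psi(v)\ce\widehat{\chi_{B^N_{r_N}}}(v\xi_0)$ for a fixed unit vector $\xi_0\in\R^N$, the function $\psi$ is real and even (since $\chi_{B^N_{r_N}}$ is radial) with $\psi(0)=1$ (since $\abs{B^N_{r_N}}=1$), and
\[
m(\xi)=\widehat{\chi_B}(\xi)=\prod_{j=1}^\ell\psi(\abs{\zeta_j}),\qquad \xi=(\zeta_1,\dots,\zeta_\ell),\ \zeta_j\in\R^N.
\]
The crucial ingredient is the following.

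\begin{beh}
There is a constant $c_N>0$, depending only on $N$, such that $\abs{\psi(v)}\le e^{-c_N\min(v^2,1)}$ for every $v\ge0$.
\end{beh}

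For $0\le v\le v_0(N)$ with $v_0(N)$ small, this follows from the Taylor expansion $\psi(v)=1-2\pi^2L(B^N_{r_N})^2v^2+\mO(v^4)$, whose linear term vanishes by central symmetry and whose quadratic coefficient is pinned down by the isotropy identity \eqref{eq:isotropy}; hence $0<\psi(v)\le1-\pi^2L(B^N_{r_N})^2v^2\le e^{-\pi^2L(B^N_{r_N})^2v^2}$ on $[0,v_0(N)]$. For $v\ge v_0(N)$ one combines the strict inequality $\abs{\psi(v)}<1$ for $v>0$ (valid because $\chi_{B^N_{r_N}}\ge0$ is not a point mass) with the Bessel decay $\widehat{\chi_{B^N_{r_N}}}(\zeta)=\mO(\abs{\zeta}^{-\frac{N+1}2})$ as $\abs{\zeta}\to\infty$ to obtain $\rho_N\ce\sup\{\abs{\psi(v)}:v\ge v_0(N)\}<1$; then any sufficiently small $c_N>0$, depending only on $N$ through $L(B^N_{r_N})$, $v_0(N)$ and $\rho_N$, satisfies the claim on $[v_0(N),\infty)$ as well.

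Granting the Claim, fix $\delta>0$, $k\ge1$, $R\ge2$, and put $P\ce\{j:\abs{\zeta_j}\le R^\delta\}$ and $S\ce\sum_{j\in P}\abs{\zeta_j}^2$. Since $\abs{\psi}\le1$ we may discard the factors with $j\notin P$, so
\[
\abs{m(\xi)}\le\prod_{j\in P}\abs{\psi(\abs{\zeta_j})}\le\exp\Big(-c_N\sum_{j\in P}\min(\abs{\zeta_j}^2,1)\Big).
\]
For $j\in P$ we have $\max(\abs{\zeta_j}^2,1)\le R^{2\delta}$ (this is where $R\ge2$ enters), hence $\min(\abs{\zeta_j}^2,1)=\abs{\zeta_j}^2/\max(\abs{\zeta_j}^2,1)\ge R^{-2\delta}\abs{\zeta_j}^2$, and therefore $\abs{m(\xi)}\le\min\big(1,\exp(-c_NR^{-2\delta}S)\big)$. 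The elementary inequality $\min(1,e^{-x})\le\tilde C_k(1+x)^{-k/2}$ (for all $x\ge0$ and $k\ge1$), applied with $x=c_NR^{-2\delta}S$, gives $\abs{m(\xi)}\le\tilde C_k(1+c_NR^{-2\delta}S)^{-k/2}$. Finally, distinguishing the cases $c_NR^{-2\delta}\ge1$ and $c_NR^{-2\delta}<1$ one checks that $1+c_NR^{-2\delta}S\ge\min(1,c_N)R^{-2\delta}(1+S)$ in both, whence
\[
\abs{m(\xi)}\le\tilde C_k\min(1,c_N)^{-k/2}R^{\delta k}(1+S)^{-k/2},
\]
which is \eqref{eq:lemma6} with $C_{k,N}\ce\tilde C_k\min(1,c_N)^{-k/2}$, a constant independent of $R$ and $\ell$.

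Up to the substitution of $\psi$ for the sine cardinal, this is exactly Bourgain's proof in \cite{bou2}, so the only genuinely new point — and the step that needs care — is the Claim: one must ensure that the Gaussian majorant for $\abs{\widehat{\chi_{B^N_{r_N}}}}$ has a constant $c_N$ depending only on $N$, and not on the number of factors $\ell$. This is unproblematic precisely because the Claim concerns a single factor, and both of its inputs are classical: the strict quadratic maximum of $\widehat{\chi_{B^N_{r_N}}}$ at the origin (forced by $\abs{B^N_{r_N}}=1$ together with \eqref{eq:isotropy}, cf.\ also \eqref{eq:bourgaindecay}), and the $\abs{\zeta}^{-(N+1)/2}$ decay of $\widehat{\chi_{B^N_{r_N}}}$ at infinity coming from the Bessel asymptotics.
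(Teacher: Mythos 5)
Your proof is correct and follows essentially the same strategy as the paper: establish a Gaussian-type single-factor bound $\abs{\widehat{\chi_{B^N_{r_N}}}(\zeta)}\le e^{-c_N\min(\abs\zeta^2,1)}$ (the paper splits this into \eqref{eq:l61}/\eqref{eq:l62}, proved via the Bessel series and the integral representation, whereas you derive the near-origin quadratic decay from a Taylor expansion and the isotropy constant and the away-from-origin bound from continuity plus Bessel decay), and then pass from the exponential estimate $\exp(-cR^{-2\delta}\sum_{\abs{\zeta_j}\le R^\delta}\abs{\zeta_j}^2)$ to the polynomial one by the same elementary step. The only cosmetic difference is that you discard the large-$\abs{\zeta_j}$ factors outright and absorb the unit scale into $\min(v^2,1)$, while the paper keeps the split $I_0=\{j:\abs{\zeta_j}>1\}$ explicit; the substance is identical.
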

\begin{proof}
  To adapt the original proof to our setting, we need to show
  \begin{equation}
    \label{eq:l61}
    | \widehat{ \chi_{B_{r_N}^N}}(\zeta)| \le e^{-c|\zeta|^2}
  \end{equation}
  for $|\zeta| \le 1$ and
  \begin{equation}
    \label{eq:l62}
    | \widehat {\chi_{B_{r_N}^N}}(\zeta)| \le C = e^{-c'}
  \end{equation}
  for $|\zeta| > 1$, with $c' > 0$, $C < 1$, $\zeta \in \R^N$.
  Since
  \begin{equation*}
    \widehat {\chi_{B_{r_N}^N}}(\zeta)
    = r_N^{\nicefrac N2} |\zeta|^{-\nicefrac N2} J_{\frac N2}( 2
    \pi r_N |\zeta|)
  \end{equation*}
  with $J_\nu$ being the Bessel function of order $\nu$, we use the
  well-known series expansion
  \begin{equation}
    \label{eq:bessel}
    J_\nu(x) = \pi^{ -\nicefrac12} \frac{x^\nu}{2^\nu}
    \sum_{k=0}^\infty (-1)^k \frac{\Gamma(k + \nicefrac12)}{\Gamma(k + \nu
      + 1)} \frac{x^{2k}}{(2k)!}
  \end{equation}
  to get
  \begin{IEEEeqnarray*}{rCl}
    \widehat {\chi_{B_{r_N}^N}}(\zeta) 
    &=& \pi^{\frac{N-1}2} r_N^N \sum_{k=0}^\infty (-1)^k
    \frac{\Gamma(k + \nicefrac12)}{\Gamma(k + \nicefrac N2 + 1)}
    \frac{(2\pi r_N |\zeta|)^{2k}}{(2k)!} \\
    &=& \pi^{-\nicefrac12} \Gamma(\nicefrac N2+1) \sum_{k=0}^\infty
    (-1)^k \frac{\Gamma(k + \nicefrac12)}{\Gamma(k + \nicefrac N2 +
      1)} \frac{(2\pi r_N |\zeta|)^{2k}}{(2k)!} \\
    &=& \sum_{k=0}^\infty (-1)^k \frac{(2\pi r_N |\zeta|)^{2k}}{(2k)!}
    \prod_{j=0}^{k-1} \frac{j + \nicefrac12}{j + \nicefrac N2}.
    \yesnumber \label{eq:zusatzding}
  \end{IEEEeqnarray*}
  Let $f \colon \R \to \R$,
  \begin{equation*}
    f(t) = \sum_{k=0}^\infty (-1)^k \frac{(2\pi r_N t)^{2k}}{(2k)!}
    \prod_{j=0}^{k-1} \frac{j + \nicefrac12}{j + \nicefrac N2}.
  \end{equation*}
  Then we have
  \begin{equation*}
    f(0) = 1, \quad \frac{\dd}{\dd t} (f(t) - e^{-c t^2}) \bigg\vert_{t=0} = 0,
  \end{equation*}
  and
  \begin{equation}
    \label{eq:2ndderis}
    \frac{\dd^2}{\dd t^2} (f(t) - e^{-c t^2}) \bigg\vert_{t=0} = - 2 \pi r_N
    \frac1N + 2c
  \end{equation}
  for each $c>0$. Hence $f(t) - e^{-c t^2}$ has a local minimum at
  $t=0$ for $c > \frac{\pi r_N}N$, implying that \eqref{eq:l61}
  holds near $0$ with such a choice of $c$. Thus we are left with
  showing that $|\widehat {\chi_{B_{r_N}^N}}(\zeta)| < 1$ if $\zeta
  \neq 0$, which implies $|\widehat {\chi_{B_{r_N}^N}}(\zeta)| \le
  C_a < 1$ on $\R_{\ge a}$ for each $a > 0$. For that, we use that
  \begin{IEEEeqnarray*}{rCl}
    \widehat {\chi_{B_{r_N}^N}}(\zeta)
    &=& \frac{\Gamma(\frac N2 +
      1)}{\Gamma(\frac{N+1}2)\pi^{\nicefrac12}} 
    \int_{-1}^1 e^{it \cdot 2\pi r_N |\zeta|} (1-t^2)^{\frac{N-1}2}
    \dd t \\
    &=& \frac{\Gamma(\frac N2 +
      1)}{\Gamma(\frac{N+1}2)\pi^{\nicefrac12}} \cdot 
    2 \int_0^1 \cos(t \cdot 2\pi r_N |\zeta|) (1-t^2)^{\frac{N-1}2}
    \dd t. \yesnumber \label{eq:vdcint}
  \end{IEEEeqnarray*}
  Since the integrand in \eqref{eq:vdcint} is continuous and $|\cos(t
    \cdot 2\pi r_N |\zeta|)| < 1$ for all but finitely many $t \in
  \mathopen[ 0, 1 \mathclose]$ (if $|\zeta| \neq 0$), we get
  \begin{equation*}
    |\widehat {\chi_{B_{r_N}^N}}(\zeta)| 
    < \frac{\Gamma(\frac N2 +
      1)}{\Gamma(\frac{N+1}2)\pi^{\nicefrac12}} \cdot 2 \int_0^1
    (1-t^2)^{\frac{N-1}2} \dd t
    = 1.
  \end{equation*}
  Thus \eqref{eq:l61} and \eqref{eq:l62} hold, allowing us to conclude
  the Lemma as in \cite{bou2}. We will recall the argument.\\
  Let $I_0 = \{j \in \ito \ell : |\zeta_j|>1 \}$. Since for $j \notin I_0$,
  we have $|\zeta_j| \le 1$ and thus $|m(\zeta_j)| <
  e^{-c|\zeta_j|^2}$ by \eqref{eq:l61}, we get
  \begin{equation*}
    \prod_{j\notin I_0} |m(\zeta_j)|
    \le \exp\bigg(-C\sum_{j\notin I_0} |\zeta_j|^2 \bigg),
  \end{equation*}
  and also, by \eqref{eq:l62}
  \begin{equation*}
    \prod_{j\in I_0} |m(\zeta_j)| \le e^{-c'|I_0|}.
  \end{equation*}
  Together with the obvious estimate
  \begin{equation*}
    \sum_{|\xi_j|\le R^\delta} |\zeta_j|^2
    \le R^{2\delta}|I_0| + \sum_{j\notin I_0} |\zeta_j|^2 
    \le R^{2\delta} \bigg(|I_0| + \sum_{j\notin I_0} |\zeta_j|^2
    \bigg),
  \end{equation*}
  this leads to
  \begin{equation}
    \label{eq:mloc}
    |m(\xi)|
    \le \exp \bigg(-c R^{-2\delta} \sum_{|\zeta_j|\le R^\delta}
      |\zeta_j|^2 \bigg).
  \end{equation}
  But $e^{-C|x|} = \mO((1+|x|)^{-\nicefrac k2})$ for every $k \in
  \N_{>0}$, and hence, \eqref{eq:mloc} implies
  \begin{equation*}
    |m(\xi)|
    \le C_k \bigg( 1 + R^{-2\delta}\sum_{|\zeta_j|\le R^\delta} |\zeta_j|^2
    \bigg)^{-\nicefrac k2} 
    \le C_k \bigg( R^{-2\delta} + R^{-2\delta} \sum_{|\zeta_j|\le
      R^\delta} |\zeta_j|^2 \bigg)^{-\nicefrac k2},
  \end{equation*}
  immediately concluding the Lemma.
\end{proof}
With Lemma \ref{lemma:lemma6} we can establish a bound
\begin{equation}
  \label{eq:g2}
  \bigg\| \bigg( \sum_{j=1}^n | g \ast \mu_j |^2 \bigg)^{\nicefrac12}
  \bigg\|_2
  \le C_K R^{1-\frac{\eps K}{10}} \| f \|_2,
\end{equation}
with $C_K$ only depending on $K$. To achieve \eqref{eq:g2}, one
simply has to replace $\xi_j$ by $\zeta_j$ and $\hat \eta(t
\xi_j)$ by $\hat \eta(t\zeta_{j,1}) \cdots \hat \eta(t\zeta_{j,N})$ in
the corresponding proofs from \cite{bou2}. By interpolation with
\eqref{eq:mujp}, the choice $K = \big\lceil \frac{10(p-1)}\eps
\big\rceil$ gives us
\begin{equation*}
  \bigg\| \bigg(\sum_{j=1}^n |g \ast \mu_j|^2 \bigg)^{\nicefrac12}
  \bigg\|_p
  \le C_{p, \eps} \| f \|_p.
\end{equation*}
Hence it only remains to estimate
\begin{equation}
  \label{eq:akp}
  \bigg\| \bigg(\sum_{j=1}^n |A_k f \ast \mu_j|^2 \bigg)^{\nicefrac12}
  \bigg\|_p
  \le C_{p, \eps} R^{12N \cdot\eps} \| f \|_p
\end{equation}
for $0 \le k \le K$. For $S \subset \ito n$, put
\begin{equation*}
  \Gamma_S \ce \prod_{j\notin S} S_j \prod_{j\in S} (\Id - S_j). 
\end{equation*}
Then, by the triangle inequality, we have
\begin{IEEEeqnarray}{rCl}
  \bigg(\sum_{j=1}^n |A_k f \ast \mu_j|^2 \bigg)^{\nicefrac12}
%  &=& \bigg(\sum_{j=1}^n \bigg| \sum_{|S|=k} \Gamma_S f \ast \mu_j
%  \bigg|^2 \bigg)^{\nicefrac12} \\
  &\le& \bigg(\sum_{j=1}^n \bigg| \sum_{\substack{|S|=k\\
        j\notin S}} \Gamma_S f \ast \mu_j \bigg|^2
    \bigg)^{\nicefrac12} \label{eq:decouple1} \\
  && +\> \bigg(\sum_{j=1}^n \bigg| \sum_{\substack{|S|=k\\ j\in S}}
  \Gamma_S f \ast \mu_j \bigg|^2
  \bigg)^{\nicefrac12}. \IEEEeqnarraynumspace
  \label{eq:decouple2} 
\end{IEEEeqnarray}
Bourgain applies a stochastic method to
decouple the variables, which reduces \eqref{eq:decouple1} to the case
$k=0$ and \eqref{eq:decouple2} to the case $k=1$. We can acquire the
same by replacing $T_j$ by $S_j$ in that procedure. With that, we only
have to find suitable constants $b_0=b_0(R)$, $b_1=b_1(R)$ such that
\begin{equation}
  \label{eq:b00}
  \bigg\| \bigg( \sum_{j=1}^n | A_0 f \ast \mu_j |^2
  \bigg)^{\nicefrac12} \bigg\|_p
  \le b_0 \| f \|_p
\end{equation}
and
\begin{equation}
  \label{eq:b11}
  \bigg\| \bigg(\sum_{k=1}^\ell \big| \Gamma_k G_k f \big|^2
  \bigg)^{\nicefrac12} \bigg\|_p 
  \le b_1 \| f \|_p,
\end{equation}
where $\Gamma_k = (\Id-S_k)\prod\limits_{j\neq k} S_j$ and
\begin{equation*}
  G_kf = \bigg( \sum_{j \in V_k} | f \ast \mu_j |^2 \bigg)^{\nicefrac12}.
\end{equation*}
Let $B_p=B_{p,R}$ be minimal such that 
\begin{equation}
  \label{eq:ap}
  \bigg\| \bigg( \sum_{j=1}^n | f \ast \mu_j |^2 \bigg)^{\nicefrac12}
  \bigg\|_p
  \le B_p \|f \|_p.
\end{equation}
To estimate $b_1$, we need to rely on Lemmas 7-9 in \cite{bou2}. The
proofs of these will become more complicated in our setting, with
estimates that will depend on $N$. For the proofs, we will provide
slightly more details than in \cite{bou2}. Instead of using the
properties of the convolution operators $T_j$, we need to convolve
with a function that is roughly stable under small
translations. Bourgain considers the function
\begin{equation*}
  % \label{eq:phi2}
  \varphi(x) \ce \frac c{1+x^4}
\end{equation*}
with $c$ so that $\int_\R \varphi(x) \dd x = 1$. Then
$C^{-1}\varphi(x-y) \le \varphi(x) \le C\varphi(x-y)$ for every $x \in
\R$ and $|y| \le 1$, $\varphi(x) =\mO(e^{-C|x|})$ as $|x| \to
\infty$, and $|1 - \hat \varphi(x)| < Cx^2$ for every $x \in \R$.\\
Fix $t_0 \ce R^{-3\eps}$, and let $\widetilde L_j \colon L^p(\R^n) \to
L^p(\R^n)$ be the convolution by $\varphi_{t_0}$ in the $j$-th
variable, $j \in \ito n$. For $k \in \ito \ell$, let
\begin{equation*}
  L_k \ce \prod_{j \in V_k} \widetilde L_j \quad \text{and} \quad L^{(k)}
  \ce \prod_{j \notin V_k} \widetilde L_j = \prod_{k' \neq k} L_{k'}.
\end{equation*}
With these notions, we show the following version of Lemma 7 from
\cite{bou2}.
\begin{lemma}
  \label{lemma:lemma7}
  Let $M\in \N$, $q=2^M$, and let $f_1, \ldots, f_\ell \in L^q(\R^n)$ be
  positive functions. Then
  \em
  \begin{IEEEeqnarray}{rCl}
    \bigg\| \sum_{j=1}^\ell L^{(j)} f_j \bigg\|_q
    &\le& C_{q, N} \sum_{k=0}^{M-1} \bigg\| \bigg( \prod_{j=1}^\ell L_j
    \bigg) \bigg( \sum_{j=1}^\ell f_j^{2^k} \bigg)
    \bigg\|_{2^{M-k}}^{2^{-k}}
    + C_{q, N} \bigg( \sum_{j=1}^\ell \| f_j \|_q^q 
    \bigg)^{\nicefrac1q} \label{eq:35} \\
    &\le& C_{q, N} \bigg\| \sum_{j=1}^\ell f_j \bigg\|_q \label{eq:352}.
  \end{IEEEeqnarray}  
  \em
\end{lemma}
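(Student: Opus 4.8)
\emph{Plan.} I would follow the proof of Lemma~7 in \cite{bou2}, making two systematic substitutions throughout: every convolution operator that appears is now a one-variable convolution by $\varphi_{t_0}$, and wherever \cite{bou2} invokes special features of the kernel $\eta=(1-\abs x)_+$ I would instead use the two-sided stability $C^{-1}\varphi(x-y)\le\varphi(x)\le C\varphi(x-y)$ for $\abs y\le1$, i.e.\ stability of $\varphi_{t_0}$ at scale $t_0$. Since a single block operator $L_k=\prod_{j\in V_k}\widetilde L_j$ is a product of only $n_k\le N$ such one-variable convolutions, this stability is used one block at a time and yields constants depending on $N$ but never on $\ell$ --- that uniformity in $\ell$ is precisely why $\varphi$ replaces the compactly supported $\eta$. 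The inequality \eqref{eq:352} is the soft part: each $L_k$, hence $\prod_{k=1}^\ell L_k$, is a convolution with a probability density, hence an $L^r$-contraction for every $r\ge1$; combined with $\sum_j f_j^{2^k}\le\big(\sum_j f_j\big)^{2^k}$ and $\sum_j\norm{f_j}_q^q\le\big\|\sum_j f_j\big\|_q^q$ for positive $f_j$, each of the $M+1$ summands on the right of \eqref{eq:35} is $\le\big\|\sum_j f_j\big\|_q$, which gives \eqref{eq:352}.

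For \eqref{eq:35} I would induct on $M$. The case $M=0$ is immediate: the $k$-sum is empty and $\big\|\sum_j L^{(j)}f_j\big\|_1=\sum_j\norm{L^{(j)}f_j}_1=\sum_j\norm{f_j}_1$ because each $L^{(j)}$ preserves mass. For $q=2^M$, $M\ge1$, I would square and split:
\begin{equation*}
  \Big\|\sum_j L^{(j)}f_j\Big\|_q^2=\Big\|\Big(\sum_j L^{(j)}f_j\Big)^2\Big\|_{q/2}\le\Big\|\sum_j(L^{(j)}f_j)^2\Big\|_{q/2}+\Big\|\sum_{j\neq j'}(L^{(j)}f_j)(L^{(j')}f_{j'})\Big\|_{q/2},
\end{equation*}
using Minkowski in $L^{q/2}$ (note $q/2=2^{M-1}\ge1$) and then $\sqrt{a+b}\le\sqrt a+\sqrt b$. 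For the diagonal term, Jensen gives $(L^{(j)}f_j)^2\le L^{(j)}(f_j^2)$ because $L^{(j)}$ is a probability average; applying the induction hypothesis at level $M-1$ to the positive functions $f_j^2$, reindexing $k\mapsto k+1$, and using $\norm{f_j^2}_{2^{M-1}}=\norm{f_j}_q^2$ together with $\sqrt{\sum_i a_i^2}\le\sum_i a_i$, this produces after the square root exactly the summands $k=1,\dots,M-1$ of \eqref{eq:35} and the tail term $\big(\sum_j\norm{f_j}_q^q\big)^{1/q}$.

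It remains to recover the $k=0$ summand from the off-diagonal part, and this is the crux. For $j\neq j'$ the operator $L^{(j)}=\prod_{m\neq j}L_m$ contains the factor $L_{j'}$, so $L^{(j)}f_j$ is slowly varying at scale $t_0$ in block $j'$ (with constant $\le C^{n_{j'}}\le C^N$), and symmetrically; equivalently, writing $\sum_{j'\neq j}L^{(j')}f_{j'}=L_jG_j$ with $G_j\ge0$, the factor $L_jG_j$ is slowly varying in block $j$. I would exploit this exactly as in the decoupling of \cite{bou2}, with the $S_j$ there replaced by the $\widetilde L_j$: insert the missing block smoothings so as to replace each $L^{(j)}f_j$ by $\big(\prod_{m=1}^\ell L_m\big)f_j$ at the cost of a factor depending only on $N$, after which the cross terms reassemble into $\big\|\big(\prod_{m=1}^\ell L_m\big)\sum_j f_j\big\|_q^2$, with no $\ell$-dependence. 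The model case $M=1$ already shows the mechanism transparently: by self-adjointness of the $\widetilde L_j$ and the identity $L^{(j)}L^{(j')}=\big(\prod_{m=1}^\ell L_m\big)\prod_{m\neq j,j'}L_m$ one rewrites $\scp{L^{(j)}f_j}{L^{(j')}f_{j'}}=\scp{\big(\prod_m L_m\big)f_j}{\big(\prod_{m\neq j,j'}L_m\big)f_{j'}}$; since $\big(\prod_m L_m\big)f_j$ is slowly varying in both blocks $j$ and $j'$, one has $L_jL_{j'}\big(\prod_m L_m\big)f_j\ge C_N^{-1}\big(\prod_m L_m\big)f_j$ pointwise, whence $\scp{L^{(j)}f_j}{L^{(j')}f_{j'}}\le C_N\scp{\big(\prod_m L_m\big)f_j}{\big(\prod_m L_m\big)f_{j'}}$, and summing over $j\neq j'$ gives the $M=1$ bound. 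Combined with the diagonal estimate, this would close the induction.

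The step I expect to be the main obstacle is exactly this off-diagonal decoupling for general $M$: one has to arrange the argument so that at each stage only the smoothing in one block is undone or inserted, so that the accumulated constant depends only on $N=\max_k n_k$ and never on $\ell$ --- a crude Cauchy--Schwarz in the sum over $j$, for instance, would already lose a factor $\ell$ and is useless here. Everything else (Jensen for $L^{(j)}$, the reindexing identities, the explicit dependence of the stability constants on $N$, the elementary inequalities for positive functions) is routine bookkeeping.
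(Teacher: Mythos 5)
Your induction scheme, diagonal treatment via Jensen, and the soft inequality \eqref{eq:352} are all fine and match the paper's. The gap is in the off-diagonal step, where you claim
\[
\bigg\|\sum_{j\neq j'}(L^{(j)}f_j)(L^{(j')}f_{j'})\bigg\|_{q/2}\le C_N\bigg\|\bigg(\prod_m L_m\bigg)\sum_j f_j\bigg\|_q^{2}.
\]
This is false as soon as $q\ge 4$. Take $\ell=2$, $n_1=n_2=1$, $f_1=f_2=f$ with $f=\eps^{-2}\chi_{[-\eps,\eps]^2}$ and $\eps\ll t_0$. Then $L_1f\approx\eps^{-1}\varphi_{t_0}(x_1)\chi_{[-\eps,\eps]}(x_2)$ and $L_2f$ symmetrically, so $\|(L_1f)(L_2f)\|_2\sim\eps^{-1}t_0^{-2}$, while $\|L_1L_2(2f)\|_4^2\sim t_0^{-3}$; the ratio diverges as $\eps\to 0$. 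The mechanism you describe cannot repair this: there is no pointwise inequality $L^{(j)}f_j\lesssim(\prod_m L_m)f_j$, and the self-adjointness trick that carries the $M=1$ case relies on the $L^2$ pairing and has no analogue for the $L^{q/2}$ norm once $q/2\ge 2$.

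The paper sidesteps this by \emph{not} squaring. It expands $\|\sum_j L^{(j)}f_j\|_q^q$ as a $q$-fold sum over ordered tuples $j_1\le\cdots\le j_q$ and distinguishes $j_1=j_2$ from $j_1<j_2$. In the second case it inserts the missing block smoothing on the \emph{single} factor $f_{j_1}$ (legitimate because every other factor already contains $L_{j_1}$ and is therefore slowly varying in block $j_1$, so one can shift and average in that block at cost $C_N$), and then a single application of H\"older gives a bound
\[
C_{q,N}\,\bigg\|\bigg(\prod_j L_j\bigg)\sum_j f_j\bigg\|_q\,\bigg\|\sum_j L^{(j)}f_j\bigg\|_q^{q-1}.
\]
Combined with the diagonal term this yields $x^q\le a\,x^{q-2}+b\,x^{q-1}$ for $x=\|\sum_j L^{(j)}f_j\|_q$, i.e.\ $x^2\le a+bx$, hence $x\le\sqrt a+b$. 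That absorption is precisely what replaces your off-diagonal claim; without it the term $\|\sum_j L^{(j)}f_j\|_q^{q-1}$ never appears on the right and there is nothing to bootstrap against. You will not be able to close the induction along the lines of your sketch unless you adopt this (or an equivalent) absorption; the cross-term bound you posit is simply too strong.
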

\begin{proof}
  The proof of \eqref{eq:352} is easy. We will show \eqref{eq:35} by
  induction on $M$, with the case $M=0$ ($q=1$) being obvious. Fix
  $M>0$ and assume that 
  \begin{equation*}
    % \label{eq:353}
    \bigg\| \sum_{j=1}^n L^{(j)} f_j \bigg\|_{\nicefrac q2}
    \le C_q \sum_{k=0}^{M-2} \bigg\| \bigg( \prod_{j=1}^n L_j
    \bigg) \bigg( \sum_{j=1}^n f_j^{2^k} \bigg)
    \bigg\|_{2^{M-1-k}}^{2^{-k}}
    + C_q \bigg( \sum_{j=1}^n \| f_j \|_{\nicefrac q2}^{\nicefrac q2}
    \bigg)^{\nicefrac 2q}.
  \end{equation*}
  Then we have
  \begin{equation*}
    \bigg\| \sum_{j=1}^n L^{(j)} f_j \bigg\|_q^q
    \le q! \sum_{1\le j_1 \le \ldots \le j_q \le \ell}\, \int_{\R^n} \prod_{k=1}^q
    \big( L^{(j_k)} f_{j_k}(x) \big) \dd x
  \end{equation*}
  If we split
  \begin{IEEEeqnarray*}{rCl}
    \sum_{1\le j_1 \le \ldots \le j_q \le \ell} \,\int_{\R^n} \prod_{k=1}^q
    \big( L^{(j_k)} f_{j_k}(x) \big) \dd x
    &=& \sum_{\substack{j_1 \le \ldots \le j_q \\ j_1 = j_2}}
    \,\int_{\R^n} \prod_{k=1}^q \big( L^{(j_k)} f_{j_k}(x)
    \big) \dd x \\
    && +\> \sum_{\substack{j_1 \le \ldots \le j_q \\ j_1 < j_2}}
    \,\int_{\R^n} \prod_{k=1}^q \big( L^{(j_k)} f_{j_k}(x)
    \big) \dd x, %\yesnumber \label{eq:366}
  \end{IEEEeqnarray*}
  we can estimate the first sum as follows, using Hölder's
  inequality with $\frac q2$ and $\frac q{q-2}$
  \begin{IEEEeqnarray*}{rCl}
    \sum_{\substack{j_1 \le \ldots \le j_q \\ j_1 = j_2}}
    \,\int_{\R^n} \prod_{k=1}^q \big( L^{(j_k)} f_{j_k}(x)
    \big) \dd x
    &\le& \int_{\R^n} \bigg( \sum_{j=1}^n \big( L^{(j)}f_j(x) \big)^2
    \bigg) \cdot \bigg( \sum_{j=1}^n L^{(j)}f_j(x) \bigg)^{q-2} \dd x
    \\
    &\le& \bigg\| \sum_{j=1}^n \big( L^{(j)}f_j \big)^2
    \bigg\|_{\nicefrac q2} \bigg\| \sum_{j=1}^n L^{(j)}f_j
    \bigg\|_q^{q-2}. \yesnumber \label{eq:3661}
  \end{IEEEeqnarray*}
  In the case $j_1 < j_2$, we estimate
  \[
  \int_{\R^n} \prod_{k=1}^q \big( L^{(j_k)} f_{j_k}(x) \big) \dd x
  \]
  directly. Without loss of generality, assume $j_1=1$ and put 
  \begin{equation*}
    g_j = \bigg( \prod_{\substack{1\le k\le \ell \\ k \notin \{1,
        j\}}} L_k \bigg) f_j
  \end{equation*}
  for $j \in \ito \ell$. Denote $x = (x^{(1)}, x')$ with $x^{(1)} \in
  \R^N$ and $x' \in \R^{n-N}$, and let $\Phi \colon \R^N \to \R$,
  $\Phi(y) = \prod_{k=1}^N \varphi(y_k)$. Then 
  \begin{equation*}
    \Phi_{t_0}(y) = \prod_{k=1}^N \varphi_{t_0}(y_k),
  \end{equation*}
  and we get
  \begin{IEEEeqnarray*}{rCl}
    \IEEEeqnarraymulticol{3}{l}{
      \int_{\R^n} \prod_{k=1}^q \big( L^{(j_k)} f_{j_k}(x)
      \big) \dd x } \\\quad 
    &=& \int_{\R^{n-N}} \int_{\R^N} g_1(x^{(1)}, x')
    \cdot \prod_{k=2}^q L_1g_{j_k}(x^{(1)}, x') \dd x^{(1)} \dd x' \\
    &=& \int_{\R^{n-N}} \int_{(\R^N)^{q-1}} \int_{\R^N}
    g_1(x^{(1)}, x') \cdot \prod_{k=2}^q \big(
    g_{j_k}(x^{(1)}-y^{(k)}, x') \Phi_{t_0}(y^{(k)}) \big) \dd x^{(1)}
    \dd(y^{(2)}, \ldots, y^{(q)}) \dd x'.
  \end{IEEEeqnarray*}
  Now fix $x'$. Using that $\varphi_{t_0}(\tau) \ge \varphi_{t_0}(t_0)
  = \frac c{2t_0}$, averaging over $|\tau| \le t_0$ implies 
  \begin{IEEEeqnarray*}{rCl}
    \IEEEeqnarraymulticol{3}{l}{
      \int_{(\R^N)^{q-1}} \int_{\R^N}
    g_1(x^{(1)}, x') \cdot \prod_{k=2}^q \big(
    g_{j_k}(x^{(1)}-y^{(k)}, x') \Phi_{t_0}(y^{(k)}) \big) \dd x^{(1)}
    \dd(y^{(2)}, \ldots, y^{(q)}) }\\\quad
    &\le& c^{-N} \int_{\mathopen[-t_0, t_0 \mathclose]^N}
    \Phi_{t_0}(\tau) \int_{(\R^N)^{q-1}} \int_{\R^N} g_1(x^{(1)}, x')
    \\ 
    && \cdot\> \prod_{k=2}^q \big( g_{j_k}(x^{(1)}-y^{(k)}, x')
    \Phi_{t_0}(y^{(k)}) \big) \dd x^{(1)} \dd(y^{(2)}, \ldots,
    y^{(q)}) \dd \tau\\
    &=& c^{-N} \int_{\mathopen[-t_0, t_0 \mathclose]^N}
    \Phi_{t_0}(\tau) \int_{(\R^N)^{q-1}} \int_{\R^N} g_1(x^{(1)}-\tau,
    x') \\
    && \cdot\> \prod_{k=2}^q \big( g_{j_k}(x^{(1)}-y^{(k)}, x')
    \Phi_{t_0}(y^{(k)} + \tau) \big) \dd x^{(1)} \dd(y^{(2)}, \ldots,
    y^{(q)}) \dd \tau\\
    &\le& C^{N(q-1)} c^{-N} \int_{\mathopen[-t_0, t_0 \mathclose]^N}
    \Phi_{t_0}(\tau) \int_{(\R^N)^{q-1}} \int_{\R^N} g_1(x^{(1)}-\tau,
    x') \\
    && \cdot\> \prod_{k=2}^q \big( g_{j_k}(x^{(1)}-y^{(k)}, x')
    \Phi_{t_0}(y^{(k)}) \big) \dd x^{(1)} \dd(y^{(2)}, \ldots,
    y^{(q)}) \dd \tau\\
    &\le& C_{q, N} \int_{\R^N} \prod_{k=1}^q \big(L_1g_{j_k}(x^{(1)},
    x') \big) \dd x^{(1)}.
  \end{IEEEeqnarray*}
  Altogether, we have
  \begin{IEEEeqnarray*}{rCl}
    \int_{\R^n} \prod_{k=1}^q \big( L^{(j_k)} f_{j_k}(x)
    \big) \dd x
    &\le& C_{q, N} \int_{\R^n} \prod_{k=1}^q \big(L_1g_{j_k}(x)
    \big) \dd x \\
    &=& C_{q, N} \int_{\R^n} \bigg(\prod_{j=1}^n L_j \bigg) f_1(x)
    \cdot \prod_{k=2}^q L^{(j_k)} f_{j_k}(x) \dd
    x.
  \end{IEEEeqnarray*}
  The same argument holds if $j_1 \neq 1$, and hence by Hölder's
  inequality with $q$ and $\frac{q-1}q$,
  \begin{IEEEeqnarray*}{rCl}
    \sum_{\substack{j_1 \le \ldots \le j_q \\ j_1 < j_2}}
    \int \prod_{k=1}^q \big( L^{(j_k)} f_{j_k}(x)
    \big) \dd x
    &\le& C_{q, N} \sum_{j=1}^n \sum_{1\le j_2, \ldots, j_q \le \ell}
    \,\int_{\R^n} \bigg(\prod_{j=1}^n L_j \bigg) f_j(x) \cdot
    \prod_{k=2}^q L^{(j_k)} f_{j_k}(x) \dd x \\
    &\le& C_{q, N} \bigg\| \bigg(\prod_{j=1}^n L_j \bigg) \bigg(
    \sum_{j=1}^n f_j \bigg) \bigg\|_q \bigg\| \sum_{j=1}^n L^{(j)} f_j
    \bigg\|_q^{q-1}. \yesnumber \label{eq:3662}
  \end{IEEEeqnarray*}
  With \eqref{eq:3661} and \eqref{eq:3662}, we get
  \begin{IEEEeqnarray*}{rCl}
    \bigg\| \sum_{j=1}^n L^{(j)} f_j \bigg\|_q^q
    &\le& C_q \bigg\| \sum_{j=1}^n \big( L^{(j)}f_j \big)^2
    \bigg\|_{\nicefrac q2} \bigg\| \sum_{j=1}^n L^{(j)}f_j
    \bigg\|_q^{q-2}
    \\
    && +\> C_q \bigg\| \bigg(\prod_{j=1}^n L_j \bigg) \bigg(
    \sum_{j=1}^n f_j \bigg) \bigg\|_q \bigg\| \sum_{j=1}^n L^{(j)} f_j
    \bigg\|_q^{q-1}, 
  \end{IEEEeqnarray*}
  Since 
  \begin{equation*}
%    \label{eq:l7nomma}
    \bigg\| \sum_{j=1}^n \big( L^{(j)}f_j \big)^2 \bigg\|_{\nicefrac
      q2}^{\nicefrac12} \le \bigg\| \sum_{j=1}^n L^{(j)}f_j \bigg\|_q
  \end{equation*}
  and $(L^{(j)}f_j)^2 \le L^{(j)}f_j^2$, this leads to
  \begin{equation*}
    \bigg\| \sum_{j=1}^n L^{(j)} f_j \bigg\|_q
    \le C_q \bigg\| \sum_{j=1}^n L^{(j)}f_j^2
    \bigg\|_{\nicefrac q2}^{\nicefrac12}
    + C_q \bigg\| \bigg(\prod_{j=1}^n L_j \bigg) \bigg( \sum_{j=1}^n
    f_j \bigg) \bigg\|_q,
  \end{equation*}
  and our induction hypothesis concludes the lemma.
\end{proof}
We directly show a version of Lemma 9 from \cite{bou2}, which is a
corollary of Lemma 8.
\begin{lemma}
  \label{lemma:lemma9}
  Let $M \ge 1$, $q = 2^M$, and $f_1, \ldots, f_\ell \in
  L^q(\R^n)$. Then
  \begin{equation}
    \label{eq:39}
    \bigg\| \bigg( \sum_{k=1}^\ell |L^{(k)} G_k f_k |^2
    \bigg)^{\nicefrac12} \bigg\|_q
    \le C_{q, N} R^{12 N \cdot \eps} \bigg\| \bigg( \sum_{k=1}^\ell | f_k
    |^2 \bigg)^{\nicefrac12} \bigg\|_q.
  \end{equation}
\end{lemma}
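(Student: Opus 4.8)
The plan is to reduce Lemma~\ref{lemma:lemma9} to a mollifier-free square-function estimate — this reduction is exactly Bourgain's passage from his Lemma~8 to his Lemma~9 — and then to prove that estimate by combining the product structure of $B=(B^N_{r_N})^\ell$ with $N$-dimensional Bessel bounds in place of the one-dimensional estimates of \cite{bou2}. The only genuinely new feature is that the distinguished block $V_k$ is now $N$-dimensional, which is why the constants depend on $N$; the combinatorics are unchanged. For the first step I would use that $q=2^M$ is even. In the variables outside $V_k$, the operator $L^{(k)}$ is convolution by the probability density $\prod_{j\notin V_k}\varphi_{t_0}$, so Jensen's inequality gives the pointwise bound $|L^{(k)}G_kf_k|^2\le L^{(k)}(|G_kf_k|^2)$. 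Using $\|h\|_q^2=\|h^2\|_{2^{M-1}}$ for $h\ge 0$ and then Lemma~\ref{lemma:lemma7}, inequality \eqref{eq:352}, at the exponent $2^{M-1}$ with the nonnegative functions $|G_kf_k|^2$ in the role of $f_j$,
\begin{align*}
\bigg\|\bigg(\sum_{k=1}^\ell|L^{(k)}G_kf_k|^2\bigg)^{\nicefrac12}\bigg\|_q^2
&=\bigg\|\sum_{k=1}^\ell|L^{(k)}G_kf_k|^2\bigg\|_{2^{M-1}}
\le\bigg\|\sum_{k=1}^\ell L^{(k)}\big(|G_kf_k|^2\big)\bigg\|_{2^{M-1}}\\
&\le C_{q,N}\bigg\|\sum_{k=1}^\ell|G_kf_k|^2\bigg\|_{2^{M-1}}
= C_{q,N}\bigg\|\bigg(\sum_{k=1}^\ell|G_kf_k|^2\bigg)^{\nicefrac12}\bigg\|_q^2 .
\end{align*}
It therefore suffices to establish the mollifier-free estimate
\[
\bigg\|\bigg(\sum_{k=1}^\ell|G_kf_k|^2\bigg)^{\nicefrac12}\bigg\|_q
\le C_{q,N}\,R^{12N\eps}\,\bigg\|\bigg(\sum_{k=1}^\ell|f_k|^2\bigg)^{\nicefrac12}\bigg\|_q ,
\]
which is the analogue of Lemma~8 of \cite{bou2}.

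For this estimate I would exploit that, since $B=(B^N_{r_N})^\ell$ and $H$ is a Gaussian, $H_{1/R}$ factors over the blocks $V_1,\dots,V_\ell$. Writing $\psi\ce\chi_{B^N_{r_N}}\ast H_{1/R}$ for the ($N$-dimensional) nonnegative, rapidly decaying bump with $\int\psi=1$, one checks that for $j\in V_k$ the function $\mu_j$ equals $\del_j\psi$ in the block $V_k$ tensored with $\psi$ in each block $V_{k'}$, $k'\neq k$. As the kernels $\psi$ are nonnegative with $\int\psi=1$, Minkowski's integral inequality yields the pointwise bound
\[
G_kf_k\le\bigg(\prod_{k'\neq k}\Psi_{k'}\bigg)(\sigma f_k),\qquad
\sigma g\ce\bigg(\sum_{i=1}^N|g\ast\del_i\psi|^2\bigg)^{\nicefrac12},
\]
where $\Psi_{k'}$ is convolution by $\psi$ in the variables of $V_{k'}$ and $\sigma$ acts in the variables of $V_k$. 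The crucial single-block fact is that $\sigma$ is bounded on $L^s(\R^N)$ for every $1<s<\infty$, with norm $C_{s,N}$ uniform in $R\ge 2$: its symbol $2\pi|\zeta|\,\widehat{\chi_{B^N_{r_N}}}(\zeta)\,e^{-|\zeta|^2/R^2}$ obeys Mikhlin--H\"ormander bounds uniformly in $R$ — for small $|\zeta|$ this follows from the series \eqref{eq:bessel}, handled exactly as in the proof of Lemma~\ref{lemma:lemma6}, for large $|\zeta|$ from the oscillatory asymptotics behind $\widehat{\chi_{B^N_{r_N}}}(\zeta)=\mO(|\zeta|^{-(N+1)/2})$, while the Gaussian factor only improves matters — and a Rademacher averaging turns this into the square-function bound for $\sigma$.

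What remains is to pass from the uniform single-block bound for $\sigma$ to the $\ell$-fold square function without any dependence on $\ell$, and this is where I expect the real work to lie. Applying the Jensen/squaring device once more reduces the matter to an estimate of the shape $\big\|\sum_k\big(\prod_{k'\neq k}\Psi_{k'}\big)h_k\big\|_{2^{M-1}}\le C_{q,N}R^{12N\eps}\big\|\sum_k h_k\big\|_{2^{M-1}}$ for nonnegative $h_k$ — the analogue of \eqref{eq:352} with $\Psi_{k'}$ in place of $L_{k'}$. Here $\psi$ cannot be used directly: unlike $\varphi$, the bump $\psi$ is not comparable to its translates by unit vectors near $\del B^N_{r_N}$, which is exactly why $\varphi$ was introduced in \cite{bou2}. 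Following \cite{bou2}, I would split $G_kf_k$ according to whether $|\zeta_{k'}|\le R^{3\eps}$ for every $k'\neq k$ or not; on the low-frequency part the $\Psi_{k'}$-smoothings can be fed into the $\varphi_{t_0}$-machinery of Lemma~\ref{lemma:lemma7} (and its companions from \cite{bou2}, whose $N$-dimensional versions go through with the same Bessel estimates), while the remainder is negligible because every block $V_{k'}$ with $|\zeta_{k'}|\gtrsim R^{3\eps}$ contributes a factor $\mO\big(R^{-3\eps(N+1)/2}\big)$ through the decay of $\widehat\psi$, a gain one iterates over the number of such blocks as in Lemma~\ref{lemma:lemma6}. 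Tracking the $N$ derivatives needed for the Mikhlin bounds and the localization at scale $R^{-3\eps}$ is what produces the exponent $12N\eps$; reproducing this decoupling bookkeeping with every one-dimensional ingredient of \cite{bou2} replaced by its $N$-dimensional counterpart, and with all constants genuinely free of $\ell$, is the main obstacle.
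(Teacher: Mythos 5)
Your very first step — using Jensen and \eqref{eq:352} at exponent $2^{M-1}$ to delete the operators $L^{(k)}$ entirely — is formally correct, but it is the strategic move that leaves the rest of your argument without the tool it needs. In the paper's proof the $L^{(k)}$ are never discarded: Minkowski and Jensen are used only to strip off the Gaussian factor $H_{\nicefrac1R}$, leaving $L^{(k)}$ acting on $|G_k'f_k|^2$ with $G_k'$ built from the singular measures $\partial_j\chi_B$. One then invokes Lemma~\ref{lemma:lemma7} in its full force \eqref{eq:35} (not the cruder consequence \eqref{eq:352}), so that $\prod_k L_k$ survives in the dominant term, and the entire mechanism hinges on the pointwise absorption estimate
\begin{equation*}
\Phi_{t_0}\ast\int_{S^{N-1}_{r_N}}(\tau_\omega g)\cdot\frac{|\omega_j|}{r_N}\,\dd\sigma(\omega)
\;\le\; C_N\,R^{24N\eps}\cdot\big(\chi_{B_{r_N}^N}\ast\Phi_{t_0}\big)\ast g ,
\end{equation*}
which rests on the translation stability of $\varphi_{t_0}$ (namely $\Phi_{t_0}(x-y)\le C_N\,t_0^{-4N}\Phi_{t_0}(x)$ for $|y|\le r_N$, applied twice with $t_0=R^{-3\eps}$). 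That, and nothing about frequency localization, is the source of the power $R^{12N\eps}$.

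Once you have reduced to the $L^{(k)}$-free statement
\[
\Big\|\big(\sum_k|G_kf_k|^2\big)^{\nicefrac12}\Big\|_q\le C_{q,N}R^{12N\eps}\Big\|\big(\sum_k|f_k|^2\big)^{\nicefrac12}\Big\|_q,
\]
you are obliged — as you correctly observe — to prove an analogue of Lemma~\ref{lemma:lemma7} with $\Psi_{k'}$ (convolution by $\psi=\chi_{B^N_{r_N}}\ast H_{\nicefrac1R}$) in place of $L_{k'}$, and this is precisely what cannot be done: $\psi$ is not comparable to its translates by distances of order $r_N$ near $\partial B^N_{r_N}$, which is why Bourgain introduced $\varphi$ in the first place. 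The repair you sketch (splitting each $\prod_{k'\neq k}\Psi_{k'}$ by whether $|\zeta_{k'}|\le R^{3\eps}$, then ``feeding the low-frequency part into the $\varphi_{t_0}$-machinery'' and treating high blocks by decay) is not a proof: no operator decomposition is specified, no mechanism is given for exchanging $\Psi_{k'}$ for $L_{k'}$ on the low piece with a controlled loss, and the $\mO(R^{-3\eps(N+1)/2})$ gain from a single high-frequency block does not by itself yield an $\ell$-independent bound once one has to sum over $2^\ell$ frequency patterns. You name this ``the main obstacle'' yourself; the obstacle is created by step one of your proof and is not removed by anything that follows. Your intermediate facts — the factorization $\mu_j=(\partial_j\psi)\otimes\psi^{\otimes(\ell-1)}$ for $j\in V_k$, the Minkowski bound $G_kf_k\le(\prod_{k'\neq k}\Psi_{k'})(\sigma f_k)$, and the uniform-in-$R$ single-block $L^s$-bound for $\sigma$ — are correct and could survive in a complete proof, but as written the argument reduces the lemma to a claim that is at least as hard as the lemma itself and leaves that claim unproved.
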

\begin{proof}
  Since $\mu_j = \del_j(\chi_B \ast H_{\nicefrac1R}) = \del_j(\chi_B)
  \ast H_{\nicefrac1R}$ by taking distributional derivatives and
  convolutions, and $H_{\nicefrac1R}$ is the density function of a
  probability measure, we can use Jensen's inequality to estimate
  \begin{IEEEeqnarray*}{rCl}
    \bigg\| \bigg( \sum_{k=1}^\ell |L^{(k)} G_k f_k |^2
    \bigg)^{\nicefrac12} \bigg\|_q
    &\le& \bigg\| H_{\nicefrac1R} \ast \bigg( \sum_{k=1}^\ell |L^{(k)}
    G_k' f_k|^2 \bigg)^{\nicefrac12} \bigg\|_q
    \\ 
    &\le& \bigg\| \bigg( \sum_{k=1}^\ell L^{(k)}
    |G_k' f_k|^2 \bigg)^{\nicefrac12} \bigg\|_q,
  \end{IEEEeqnarray*}
  where
  \begin{equation*}
    G_k' f_k = \bigg( \sum_{j \in V_k} | (\del_j \chi_B) \ast f_k |^2
    \bigg)^{\nicefrac12}.
  \end{equation*}
  Hence it suffices to show
  \begin{equation}
    \label{eq:399}
    \bigg\| \bigg( \sum_{k=1}^\ell L^{(k)}
    |G_k' f_k|^2 \bigg)^{\nicefrac12} \bigg\|_q
    \le C_q R^{24 N \eps} \bigg\| \bigg( \sum_{k=1}^\ell | f_k |^2
    \bigg)^{\nicefrac12} \bigg\|_q.
  \end{equation}
  Take $\psi \in \mS(\R^n)$. We want to establish $\scp{\del_j\chi_B}
  {\psi}$ for every $j \in \ito n$. First, assume $j = \ell = 1$. Then
  we have
  \begin{IEEEeqnarray*}{rCl}
    -\scp{\del_j\chi_B}{\psi}
    = \int_{B_{r_N}^N} \del_1 \psi(x) \dd x
    &=& \int_{B^{N-1}_{r_N}} \int_{-\sqrt{r_N^2 -
        |x'|^2}}^{\sqrt{r_N^2 - |x'|^2}} \del_1 \psi(x_1, x') \dd x_1
    \dd x' \\
    &=& \int_{B^{N-1}_{r_N}} \psi \big(\sqrt{r_N^2 - |x'|^2}, x'
    \big) - \psi \big({-}{\sqrt{r_N^2 - |x'|^2}}, x' \big) \dd x' \\
    &=& \int_{S^{N-1}_{r_N}} \psi(\omega) \frac{\omega_1}{r_N} \dd
    \sigma(\omega).
  \end{IEEEeqnarray*}
  For general $j$ and $\ell$, choose the unique $k=k(j)$ with $j \in
  V_{k}$. Then
  \begin{equation}
    \label{eq:distderi}
    -\scp{\del_j\chi_B}{\psi}
    = \int_{B^{\hat k}} \int_{S^{N-1}_{r_N}} \psi(x^{(1)}, \ldots,
    \underbrace{\omega}_{k \text{-th}}, \ldots, x^{(\ell)})
    \frac{\omega_{j-(k-1)N}}{r_N} \dd \sigma(\omega) \dd x',
  \end{equation}
  where $B^{\hat k} = \prod\limits_{k' \neq k} B_{k'}$ and $x' =
  (x_1, \ldots, x_{(k-1)N},  x_{kN+1}, \ldots, x_n)$.
  Now let
  \begin{equation*}
    \tau_j f(x) = \int_{S^{N-1}_{r_N}} f(x^{(1)}, \ldots,
    x^{(k)}+\omega, \ldots, x^{(\ell)}) \frac{|\omega_{j-(k-1)N}|}{r_N}
    \dd \sigma(\omega)
  \end{equation*}
  Then
  \begin{equation*}
    \| \tau_j\|_{1 \to 1} = 2 |B^{N-1}_{r_N}|,
  \end{equation*}
  and hence
  \begin{equation*}
    \sum_{j \in V_k} |(\del_j \chi_B) \ast f_k|^2
    \le \sum_{j \in V_k} \big| \chi_{B^{\hat k}} \ast \tau_j|f_k| \big|^2
    \le 2 |B^{N-1}_{r_N}| \sum_{j \in V_k}\tau_j(|f_k|^2 \ast
    \chi_{B^{\hat k}}),
  \end{equation*}
  taking the last convolution only in the variables of $B^{\hat
    k}$. Application of Lemma \ref{lemma:lemma7} gives us
  \begin{IEEEeqnarray*}{rCl}
    \bigg\| \bigg( \sum_{k=1}^\ell L^{(k)}
    |G_k' f_k|^2 \bigg)^{\nicefrac12} \bigg\|_q
    &\le& 2 |B^{N-1}_{r_N}| \bigg\| \sum_{k=1}^\ell L^{(k)}
    \bigg(\sum_{j \in V_k} \tau_j \big( |f_j|^2 \ast \chi_{B^{\hat k}}
    \big) \bigg) \bigg\|_{\nicefrac q2}^{\nicefrac12} \\ 
    &\le& C_{q, N} \sum_{i=1}^{M-1} \bigg\| \bigg( \prod_{k=1}^\ell L_k
    \bigg) \bigg( \sum_{k=1}^\ell \bigg( \sum_{j \in V_k} \tau_j (
    |f_k|^2 \ast \chi_{B^{\hat k}} ) \bigg) ^{2^{i-1}} \bigg)
    \bigg\|_{2^{M-i}}^{2^{-i}} \\
    && +\> C_{q, N} \bigg( \sum_{k=1}^\ell \bigg\| \sum_{j \in V_k}
    \tau_j ( |f_k|^2 \ast \chi_{B^{\hat k}}) \bigg\|_{\nicefrac
      q2}^{\nicefrac q2} \bigg)^{\nicefrac 1q}.
    \yesnumber \label{eq:310} 
  \end{IEEEeqnarray*}
  We can easily estimate
  \begin{equation}
    \label{eq:311}
    \bigg( \sum_{k=1}^\ell \bigg\| \sum_{j \in V_k}
    \tau_j ( |f_k|^2 \ast \chi_{B^{\hat k}}) \bigg\|_{\nicefrac
      q2}^{\nicefrac q2} \bigg)^{\nicefrac 1q}
    \le (2 N |B^{N-1}_{r_N}|)^{\nicefrac12} \cdot \bigg\| \bigg(
    \sum_{k=1}^\ell |f_k(x)|^2 \bigg)^{\nicefrac12} \bigg\|_q.
  \end{equation}
  Note that from the properties of $\varphi$, we can deduce
  \begin{equation*}
    \varphi_{t_0}(x+\rho) = \frac1{t_0} \frac{c}{1 +
      \frac{(x+\rho)^4}{t_0^4}}
    \le \frac1{t_0} \varphi(x+\rho)
    \le \frac {C^{\lceil r_N \rceil}}{t_0} \varphi(x)
    = \frac {C_N}{t_0^5} \frac1{\frac{1+x^4}{t_0^4}}
    \le \frac {C_N}{t_0^4} \varphi_{t_0}(x)
  \end{equation*}
  for $x \in \R$ and $|\rho| \in \mathopen[-r_N, r_N
  \mathclose]$. Hence
  \begin{equation*}
    \Phi_{t_0}(x-y) \le C_N t_0^{-4N} \Phi_{t_0}(x)
  \end{equation*}
  for every $x, y \in \R^N$ with $|y| \le r_N$. Thus for any positive
  $g \in L^q(\R^N)$, we have 
  \begin{IEEEeqnarray*}{rCl}
    \bigg(\Phi_{t_0} \ast \int_{S^{N-1}_{r_N}} (\tau_\omega g) \cdot
    \frac{|\omega_j|}{r_N} \dd \sigma(\omega) \bigg) (x)
    &\le& 2 |B^{N-1}_{r_N}| C_N t_0^{-4N}  \big( \Phi_{t_0} \ast g(x)
    \big) \\
    &=& C_N t_0^{-4N} \int_{\R^N} \int_{B_{r_N}^N} \Phi_{t_0}(y) \dd
      z\, g(x-y) \dd y\\
    &\le& C_Nt_0^{-8N} \int_{\R^N} \int_{B_{r_N}^N} \Phi_{t_0}(y-z) \dd
      z\, g(x-y) \dd y\\ 
    &=& C_NR^{24N \cdot \eps} \cdot (\chi_{B_{r_N}^N} \ast
    \Phi_{t_0}) \ast g(x). %\yesnumber \label{eq:313}
  \end{IEEEeqnarray*}
  Taking $1 \le i \le N-1$, this implies
  \begin{IEEEeqnarray*}{rCl}
    \IEEEeqnarraymulticol{3}{l}{
      \bigg\| \bigg( \prod_{k=1}^\ell L_k \bigg) \bigg(
      \sum_{k=1}^\ell \bigg( \sum_{j \in V_k} \tau_j ( |f_k|^2 \ast
      \chi_{B^{\hat k}} ) \bigg) ^{2^{i-1} } \bigg)
      \bigg\|_{2^{M-i}}^{2^{-i}} } \\ \quad 
    &\le& \bigg\| \bigg( \sum_{k=1}^\ell C_N N R^{24N
     \cdot \eps} \cdot |f_k|^2 \ast \chi_B ) \bigg) ^{2^{i-1}}
   \bigg\|_{2^{M-i}}^{2^{-i}} \\  
   &\le& C_N R^{12N \cdot \eps} \bigg\|\bigg( \sum_{k=1}^\ell
   |f_k|^2 \bigg)^{\nicefrac12} \bigg\|_q. \yesnumber \label{eq:313}
  \end{IEEEeqnarray*}
  The estimates \eqref{eq:310}, \eqref{eq:311}, and \eqref{eq:313}
  conclude the proof.
\end{proof}
Now, we can estimate \eqref{eq:b00} and \eqref{eq:b11} by arguing as
in \cite[Section~4]{bou2}. Since
\begin{equation*}
  A_0 = \prod_{k=1}^\ell S_k = \prod_{j=1}^n T_j,
\end{equation*}
we can establish
\begin{equation*}
  %\label{eq:b0fin}
  b_0 \le C_p(R^{3\eps} + B_p R^{-\frac{2\eps}p})
\end{equation*}
with $B_p$ as in \eqref{eq:ap}.
For \eqref{eq:b11}, we can use Lemmas
\ref{lemma:lemma7} and \ref{lemma:lemma9} to obtain
\begin{equation*}
  %\label{eq:b1fin}
  b_1 \le C_{p, N}(R^{12N \cdot \eps} + B_p R^{-\frac{2\eps}p}).
\end{equation*}
This leads to
\begin{equation*}
  B_p \le C_{p, \eps}(1 + b_0 + b_1) < C_{p, \eps, N}(R^{12N \cdot
    \eps} + B_p R^{-\frac{2\eps}p}),
\end{equation*}
giving us
\begin{equation}
  \label{eq:bpfinal}
  B_p \le C_{p, \eps, N} R^{12N \cdot \eps}
\end{equation}
and thus proving Lemma \ref{lemma:lemma3} and Proposition \ref{prop:bouN}.

\section{Stein's approach revisited}
\label{part3}
Let $B = B_1 \times \cdots \times B_\ell$, $n = n_1 + \ldots + n_\ell$
be as in Theorem \ref{theo:one}. Let $N \ce \min\limits_{1 \le k \le
  \ell} n_k$. Fix $1 < p \le \infty$. We show that if $N >
\frac{p}{p-1}$, i.e. $p> \frac{N}{N-1}$, we can deduce
\eqref{eq:theone} in Theorem \ref{theo:one} from Theorem
\ref{theo:two}, and that \eqref{eq:thetwo} from Theorem
\ref{theo:two} holds if we show the following theorem.
\setcounter{theopr}{1}
\begin{theopr}
  \label{theo:twopr}
  Let $S \ce \big(S^{N-1}_R\big)^\ell$, with $R$ so that $|S| =
  1$. Then
  \begin{equation}
  \label{eq:theneededone}
    \norm{M_S f}_p \le C_{p, N} \norm{f}_p.
  \end{equation}
\end{theopr}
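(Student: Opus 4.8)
The plan is to run Carbery's $g$-function interpolation for the maximal operator, feeding it sharp bounds on fractional radial derivatives of $\widehat{\sigma_{S^{N-1}}}$ obtained in the spirit of M\"uller, while keeping every constant independent of $\ell$ by recycling the $\ell$-uniform square-function machinery of Section~\ref{part2}.

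\emph{Setup and reduction to a derivative $g$-function.} Let $\mu$ be the normalized uniform measure on $S = (S^{N-1}_R)^\ell$, so that, writing $\xi = (\zeta_1, \ldots, \zeta_\ell)$ with $\zeta_k \in \R^N$, one has $\widehat\mu(\xi) = \prod_{k=1}^\ell \Sigma(R\abs{\zeta_k})$, where $\Sigma(r) = c_N r^{-\nu} J_\nu(2\pi r)$, $\nu = \frac{N-2}2$, is the Fourier transform of normalized surface measure on $S^{N-1}$, with $\abs{\Sigma(r)} \le C_N (1+r)^{-\frac{N-1}2}$. Put $A_t f = f \ast \mu_t$, so $\widehat{A_t f}(\xi) = m(t\xi)\hat f(\xi)$ with $m(\xi) = \prod_k \Sigma(R\abs{\zeta_k})$. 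As in Stein's treatment of the spherical maximal function and Carbery's refinement, one bounds $\sup_{t>0}\abs{A_t f(x)}$ through the Sobolev embedding $H^{\nicefrac12}(\R_{>0}) \hookrightarrow L^\infty(\R_{>0})$: after removing, via a Littlewood--Paley projection, the part of $A_t f$ of frequency $\lesssim 1/t$ (which leaves a smooth approximate-identity maximal term), one has
\begin{equation*}
  \sup_{t>0}\abs{A_t f(x)}^2 \le 2\, G_0 f(x)\, G_1 f(x) + (\text{smooth term}), \qquad G_1 f(x) = \Big(\int_0^\infty \abs{t\del_t A_t f(x)}^2 \tfrac{\dd t}t\Big)^{\nicefrac12}.
\end{equation*}
Here $G_0$ is $L^2$-bounded with a constant depending only on $N$ (Plancherel: $\int_0^\infty \abs{m(t\xi)}^2 \tfrac{\dd t}t$ converges uniformly on $\{\abs\xi \ge 1\}$ once low frequencies are removed, the decay exponent $\tfrac{N-1}2 > 0$ and the product structure only helping), and the smooth term is controlled by the $\ell$-uniform estimates $\abs\xi\,\abs{m(\xi)} \le C_N$ and $\abs{\scp\xi{\nabla m(\xi)}} \le C_N$ (which hold for $N \ge 3$ because activating more factors of the product only improves the decay) together with the $L^p$-machinery of Section~\ref{part2}. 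Everything is thereby reduced to proving, for $p$ slightly above $\tfrac N{N-1}$, that $\norm{G_1 f}_p \le C_{p,N}\norm f_p$, where $t\del_t A_t f$ has multiplier $t\del_t m(t\xi) = \sum_{k=1}^\ell Rt\abs{\zeta_k}\,\Sigma'(Rt\abs{\zeta_k})\prod_{k'\neq k}\Sigma(Rt\abs{\zeta_{k'}})$.

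\emph{Carbery interpolation with M\"uller-type fractional derivatives.} From the Bessel form of $\Sigma$ and the classical asymptotic expansion of $J_\nu$ one builds, for $z \in \C$, an analytic family $\Sigma^{(z)}$ of ``$z$-th radial derivatives'' of $\Sigma$ and a matching analytic family of $g$-functions $G^{(z)}$ with $G^{(1)}$ comparable to $G_1$, such that for $0 \le \Re z \le \tfrac{N+1}2$
\begin{equation*}
  \abs{\Sigma^{(z)}(r)} \le C_N (1+\abs{\Im z})^{C_N} (1+r)^{-\frac{N-1}2};
\end{equation*}
that is, differentiating up to $\tfrac{N+1}2$ times costs only a polynomial factor in $\abs{\Im z}$ and no loss in the decay rate --- this is exactly the bound on $\big(\tfrac{\dd}{\dd r}\big)^z\widehat{\sigma_{S^{N-1}}}(r\xi)$ with $\Re z = \tfrac{N+1}2$ referred to in the Introduction, proved by differentiating through the asymptotic expansion and controlling every error term as M\"uller does. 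One then proves, uniformly in $\ell$: at $\Re z = 0$, $\norm{G^{(z)} f}_2 \le C_N (1+\abs{\Im z})^{C_N}\norm f_2$ by Plancherel (since $\abs{m(t\xi)} \le 1$ and no genuine derivative is taken); and at $\Re z = \tfrac{N+1}2$, $\norm{G^{(z)} f}_p \le C_{p,N} (1+\abs{\Im z})^{C_N}\norm f_p$ for \emph{every} $1 < p < \infty$, because the full decay $r^{-\frac{N+1}2}$ now absorbed turns $G^{(z)}$ into a square function built from kernels that behave like a one-dimensional fractional integral transverse to each sphere; for this one expands $(t\del_t)^z$ by a fractional Leibniz rule over the $\ell$ factors, exploits the coordinate symmetry of $S$ via Khinchin's inequality, and re-runs the Fourier-localization and vector-valued arguments of Lemmas~\ref{lemma:lemma3}, \ref{lemma:lemma7} and~\ref{lemma:lemma9}, which were designed to be $\ell$-uniform. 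Stein's complex interpolation between these endpoints, with $\Re z$ pushed from $0$ toward $\tfrac{N+1}2$ as far as the second estimate permits, yields $\norm{G_1 f}_p \le C_{p,N}\norm f_p$ in the range $p > \tfrac N{N-1}$, hence \eqref{eq:theneededone}.

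\emph{Main obstacle.} The crux is the $\Re z = \tfrac{N+1}2$ endpoint with an $\ell$-independent constant: one must marry M\"uller's bookkeeping of all error terms in the Bessel asymptotics of $\widehat{\sigma_{S^{N-1}}}$ (including the polynomial-in-$\abs{\Im z}$ control that Stein interpolation demands) with the $\ell$-uniform square-function technology of Section~\ref{part2}. Concretely, the invariant $Q$ of $(B^N)^\ell$ grows like $\sqrt\ell$ (see the computation in Section~\ref{part2}), and the point is that this growth is exactly absorbed by the $\ell^{-\nicefrac12}$-type gain of the $\ell$-fold square function $\big(\sum_k\abs{\cdot}^2\big)^{\nicefrac12}$, precisely as in Bourgain's treatment of the cube. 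The low-frequency, $t\to 0$ regime is comparatively routine, handled by the smooth-maximal estimate above.
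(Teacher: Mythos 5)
Your proposal and the paper both pursue a Carbery-style analytic interpolation, but the key technical claim on which your argument rests is not correct, and the interpolation scheme differs in a way that matters.

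The paper's proof reduces, via Lemma~\ref{lemma:dgm615} and part (ii) of Carbery's proposition, to showing that some fractional-derivative multiplier $m_\alpha$ with $\nicefrac1p < \alpha < 1$ is an $L^p$-multiplier, and it obtains this by an $L^1$-to-$L^2$ Stein interpolation: $m_z$ is an $L^1$-multiplier at $\Re z = -\eps$ and an $L^2$-multiplier at $\Re z = \tfrac{N-1}{2}-\eps$. The critical upper derivative order is $\tfrac{N-1}{2}$ (rounded up, $K=\lceil\tfrac{N-1}{2}\rceil$ in Lemma~\ref{lemma:44}), \emph{not} $\tfrac{N+1}{2}$; the sentence in the Introduction mentioning $\tfrac{N+1}{2}$ should not be taken at face value. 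The reason is in Lemma~\ref{lemma:derisofsphere}(i): for the spherical measure the $\alpha$-th radial derivative satisfies
\[
  \Big|\Big(\tfrac{\dd}{\dd r}\Big)^\alpha \tilde m(r\zeta)\Big| \le \tilde C_{\alpha,N}\,(2\pi R|\zeta|)^{\alpha - \frac{N-1}{2}},
\]
so each derivative \emph{costs} a factor $|\zeta|$ in decay. Your central display,
\[
  |\Sigma^{(z)}(r)| \le C_N (1+|\Im z|)^{C_N} (1+r)^{-\frac{N-1}{2}} \quad\text{for } 0 \le \Re z \le \tfrac{N+1}{2},
\]
is therefore false: at $\Re z = \tfrac{N+1}{2}$ the multiplier actually grows like $r^{+1}$. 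With the wrong bound the $\Re z=\tfrac{N+1}{2}$ endpoint fails even as an $L^2$-multiplier estimate, let alone ``$L^p$ for every $1<p<\infty$''; nothing in your sketch explains how to absorb the genuine growth there, and this is precisely where the interpolation angle is computed. With the \emph{correct} decay exponent you could at most go to $\Re z = \tfrac{N-1}{2}$ as an $L^2$-endpoint, which is exactly the paper's scheme, but then you also need an $L^1$-endpoint near $\Re z=0$ rather than an $L^2$-one to reach $p>\tfrac{N}{N-1}$ by the usual arithmetic.

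There is a second, subtler gap. You propose to obtain $\ell$-uniformity by ``re-running'' Lemmas~\ref{lemma:lemma3}, \ref{lemma:lemma7} and \ref{lemma:lemma9}, but those square-function lemmas are used in Section~\ref{part2} to prove Proposition~\ref{prop:bouN} and play no role in the paper's proof of Theorem~\ref{theo:twopr}. In Section~\ref{part4} the $\ell$-uniformity comes from a completely different mechanism: via Lemma~\ref{lemma:altform}, the $K$-th ordinary $r$-derivative of $m(r\xi)=\prod_k\tilde m(r\zeta_k)$ is expanded over sequences distributing the derivatives among the factors, and Lemma~\ref{lemma:derisofsphere} supplies a three-regime dichotomy (exponential decay $e^{-ct_k^2}$ for moderate $t_k = 2\pi R|\zeta_k|$, a uniform bound $\le \nicefrac12$ for large $t_k$ when underdifferentiated, and $t_k^2$-gain for small $t_k$), after which Lemma~\ref{lemma:44} assembles everything with a geometric-series count over the ``large'', ``moderate'', and ``small'' groups of factors. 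That bookkeeping, not Section~\ref{part2}'s Fourier-localization machinery, is what makes the constant independent of $\ell$. As written, your argument does not identify this structure, so even setting aside the endpoint error it would not close the $\ell$-uniformity.
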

We can freely change the radii of each sphere because for every
$r, s > 0$ and $n > 1$, we have
\begin{equation*}
  \int_{S^{n-1}_r} f(s \omega) \dd \sigma_{S^{n-1}_r}(\omega)
  = \frac1{s^{n-1}} \int_{S^{n-1}_{sr}} f(\omega) \dd
  \sigma_{S^{n-1}_{sr}}(\omega).
\end{equation*}
Thus, if $S' = S_1' \times \cdots \times S_\ell'$ is a product of
spheres with $\dim S_k' = \dim S_k$ for each $k$, we have $\| M_{S'}f
\|_p \le C \| f \|_p$ for all $f \in \mS$ if $\| M_Sf \|_p \le C
\| f \|_p$ for all $f \in \mS$.\\
We can not get a pointwise estimate $M_Bf(x) \le M_Sf(x)$ as in the
case $\ell = 1$, but we can indeed get an $L^p$-estimate by a similar
argument. Assume that each $B_k$ and each $S_k$ has radius $1$ (thus
$S_k = S^{n_k-1}$), and let $\sigma_k$ be the respective surface
measure for each $S_k$.
\begin{lemma}
  \label{lemma:mbms}
  We have
  \begin{equation}
    \label{eq:mbms}
    \| M_B f \|_p \le \| M_S f \|_p
  \end{equation}
  for each $f \in \mS(\R^n)$.
\end{lemma}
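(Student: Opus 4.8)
The plan is to imitate Stein's argument from the case $\ell=1$, where one writes the normalized indicator of the ball as an average of normalized spherical measures so that $M_{B^n}f\le M_{S^{n-1}}f$ even pointwise; here the same averaging will only give the $L^p$-inequality, as the statement warns.

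First I would pass to polar coordinates in each of the $\ell$ factors. Writing $y^{(k)}=\rho_k\omega_k$ with $\omega_k\in S^{n_k-1}$, $\rho_k\in[0,1]$, and using $\abs{S^{n_k-1}}/\abs{B^{n_k}}=n_k$, one obtains
\begin{equation*}
  M_Bf(x)=\sup_{t>0}\int_{[0,1]^\ell}w(\rho)\,A_{t\rho}\abs{f}(x)\,\dd\rho,\qquad w(\rho)\ce\prod_{k=1}^\ell n_k\rho_k^{\,n_k-1},
\end{equation*}
where $w$ is a probability density on $[0,1]^\ell$ and, for a radius vector $r=(r_1,\dots,r_\ell)$,
\begin{equation*}
  A_r g(x)\ce\frac1{\abs{S}}\int_S g\bigl(x+(r_1\omega_1,\dots,r_\ell\omega_\ell)\bigr)\,\dd\sigma_S(\omega)
\end{equation*}
is the average of $g$ over the (in general non-round) product sphere $S_r\ce\prod_{k=1}^\ell S^{n_k-1}_{r_k}$; in particular $M_Sf(x)=\sup_{s>0}A_{(s,\dots,s)}\abs{f}(x)$. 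Moving the supremum in $t$ inside the $\rho$-integral gives the pointwise bound
\begin{equation*}
  M_Bf(x)\le\int_{[0,1]^\ell}w(\rho)\,M_{S_\rho}f(x)\,\dd\rho,\qquad M_{S_\rho}f(x)\ce\sup_{t>0}A_{t\rho}\abs{f}(x),
\end{equation*}
the exact analogue of ``ball average $\le$ spherical maximal function'', with $M_{S_\rho}$ the maximal operator attached to the dilates of $S_\rho$. Taking $L^p$-norms and using that $w$ is a probability density (Minkowski's integral inequality) reduces the lemma to the $L^p$-comparison $\norm{M_{S_\rho}f}_p\le\norm{M_Sf}_p$, uniformly in $\rho\in(0,1]^\ell$.

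This last comparison is the main obstacle, and it is precisely here that the estimate must be made at the level of $L^p$-norms rather than pointwise — indeed $M_{S_\rho}f\le M_Sf$ fails, which is already why there is no pointwise bound $M_Bf\le M_Sf$. To handle it I would use the rescaling identity recorded just before the lemma: the anisotropic dilation $\delta_\rho(y^{(1)},\dots,y^{(\ell)})=(\rho_1y^{(1)},\dots,\rho_\ell y^{(\ell)})$ carries $S$ onto $S_\rho$, and, after accounting for the surface-measure Jacobians the identity supplies, one gets $M_{S_\rho}f=\bigl(M_S(f\circ\delta_\rho)\bigr)\circ\delta_\rho^{-1}$; feeding this into the $L^p$-norm, and exploiting — as in the earlier passage between products of spheres of different radii — that such anisotropic rescalings do not enlarge the relevant $L^p$-quantity, gives the desired inequality. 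I expect the only delicate point to be the normalization bookkeeping that keeps the constant exactly $1$ rather than merely the operator norm $\norm{M_S}_{p\to p}$; everything else is the one-factor argument translated factor by factor.
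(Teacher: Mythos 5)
Your proposal is correct and is essentially the paper's own proof: factorwise polar coordinates give the pointwise domination $M_Bf(x)\le\int_{[0,1]^\ell}\prod_k n_k s_k^{n_k-1}\,M_{S_s}f(x)\,\dd s$ against a probability density, and Minkowski's integral inequality together with the radius-rescaling remark preceding the lemma finishes the argument. The normalization point you flag is glossed over in the paper in exactly the same way — the anisotropic rescaling trades $f$ for $f\circ\delta_\rho$, so strictly speaking this last step delivers $\|M_Bf\|_p\le\|M_S\|_{p\to p}\|f\|_p$, the operator-norm form in which the lemma is in fact applied later — so your route does not deviate from the paper's.
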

\begin{proof}
  Using polar coordinates and the fact that
  $|S_k| = n_k|B_k|$, we can estimate
  \begin{IEEEeqnarray*}{rClr}
    M_Bf(x)
    &=& \frac1{|B|} \sup_{t>0} \int_{[0, 1]^\ell} \prod_{k=1}^\ell
    s_k^{n_k-1} \int_{S_1} \cdots \int_{S_\ell} |f(x + t(s_1\omega_1,
    \ldots, s_\ell\omega_\ell))| \dd \sigma_\ell(\omega_\ell) \ldots &
    \dd \sigma_1(\omega_1) \dd s \\
    &\le& \int_{[0, 1]^\ell} \prod_{k=1}^\ell s_k^{n_k-1} \frac1{|B|}
    \prod_{k=1}^\ell s_k^{-n_k+1} \sup_{t>0} \int_{S^{n_1-1}_{s_1}}
    \cdots \int_{S^{n_\ell-1}_{s_\ell}} |f(x + t(\omega_1, \ldots,
    \omega_\ell))|\\
    \IEEEeqnarraymulticol{3}{r}{ \dd \sigma_\ell(\omega_\ell)
      \ldots }&\dd \sigma_1(\omega_1) \dd s \\
    &=& \int_{[0, 1]^\ell} \prod_{k=1}^\ell n_k s_k^{n_k-1}
    M_{S^{n_1-1}_{s_1} \times \cdots \times S^{n_\ell-1}_{s_\ell}} f(x)
    \dd s.
  \end{IEEEeqnarray*}
  A simple application of
  Minkowski's integral inequality yields
  \begin{equation*}
    %\label{eq:from2to1}
    \| M_Bf \|_p
    \le \int_{[0, 1]^\ell} \prod_{k=1}^\ell n_k s_k^{n_k-1} \cdot
    \|M_Sf\|_p \dd s 
    = \|M_Sf\|_p,
  \end{equation*}
  which is \eqref{eq:mbms}.
\end{proof}
We now generalize Stein's method of rotations from \cite{ste} for our
situation with the following Lemma 
\begin{lemma}
  \label{lemma:stein}
  Let $S = S^{n_1 - 1} \times \cdots \times S^{n_\ell - 1}$, $k \in
  \ito \ell$, and set
  \begin{equation*}
    S^+ = \prod_{j=1}^{k-1} S^{n_j-1} \times S^{n_k} \times
    \prod_{j=k+1}^\ell S^{n_j-1}.
  \end{equation*}
  Let $1< p \le \infty$ and assume that there is a constant $C>0$ such
  that $\| M_S f \|_p \le C \| f \|_p$ for every $f \in
  \mS(\R^n)$. Then also
  \begin{equation}
    \label{eq:lemmasec3final}
    \| M_{S^+} f \|_p \le C \| f \|_p
  \end{equation}
  for every $f \in \mS(\R^n)$.
\end{lemma}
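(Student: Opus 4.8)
The plan is to localize Stein's method of rotations \cite{ste} to the $k$-th factor. Since $S^{n_k}\subset\R^{n_k+1}$, the enlarged product $S^+$ lives in the ambient space $\R^{n_1}\times\ldots\times\R^{n_k+1}\times\ldots\times\R^{n_\ell}$ of dimension $n+1$; accordingly write $x=(x^{(1)},\ldots,x^{(\ell)})$ with $x^{(k)}\in\R^{n_k+1}$ and $x^{(j)}\in\R^{n_j}$ for $j\neq k$. The one ingredient specific to spheres is the classical identity: if $\iota\colon S^{n_k-1}\hookrightarrow S^{n_k}$, $\iota(\omega')=(\omega',0)$, realizes $S^{n_k-1}$ as an equator of $S^{n_k}$, then the normalized surface measure of $S^{n_k}$ is the image of the product of the normalized Haar measure $\dd\rho$ on $SO(n_k+1)$ with the normalized surface measure of $S^{n_k-1}$ under $(\rho,\omega')\mapsto\rho\,\iota(\omega')$ --- indeed the right-hand side is an $SO(n_k+1)$-invariant probability measure on $S^{n_k}$, hence equals the normalized surface measure.

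First I would insert this identity into the $k$-th variable of the product average: by Fubini, for $f\in\mS(\R^{n+1})$, $x\in\R^{n+1}$ and $t>0$,
\[
\frac1{|S^+|}\int_{S^+}|f(x+t\omega)|\dd\sigma_{S^+}(\omega)
=\int_{SO(n_k+1)}\bigg(\frac1{|S|}\int_S|f(x+t(\omega_1,\ldots,\rho\,\iota(\omega_k),\ldots,\omega_\ell))|\dd\sigma_S\bigg)\dd\rho.
\]
Taking $\sup_{t>0}$, and using that the supremum of an average over $SO(n_k+1)$ is at most the average of the suprema, gives
\[
M_{S^+}f(x)\le\int_{SO(n_k+1)}M_S^{\rho}f(x)\dd\rho,
\]
where $M_S^{\rho}$ is the maximal operator attached to the rotated product $S^{n_1-1}\times\ldots\times\rho\,\iota(S^{n_k-1})\times\ldots\times S^{n_\ell-1}\subset\R^{n+1}$.

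Next I would reduce $M_S^{\rho}$ to the hypothesis. Let $\tilde\rho\in SO(n+1)$ act as $\rho$ on the $k$-th block and as the identity on the other blocks; then $M_S^{\rho}f(x)=M_S^{\mathrm{id}}(f\circ\tilde\rho)(\tilde\rho^{-1}x)$, so invariance of Lebesgue measure under $\tilde\rho$ gives $\norm{M_S^{\rho}f}_p=\norm{M_S^{\mathrm{id}}(f\circ\tilde\rho)}_p$. The operator $M_S^{\mathrm{id}}$ is attached to $S^{n_1-1}\times\ldots\times\iota(S^{n_k-1})\times\ldots\times S^{n_\ell-1}$, and this average never displaces the last coordinate of the $k$-th block; hence, writing $x^{(k)}=(y^{(k)},s)$ with $y^{(k)}\in\R^{n_k}$ and $s\in\R$, on the slice $\{x^{(k)}_{n_k+1}=s\}$ the function $M_S^{\mathrm{id}}f$ coincides with $M_S$ applied to the $\R^n$-function obtained by freezing that coordinate to $s$. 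Integrating the $p$-th powers over $s$, Fubini and the hypothesis $\norm{M_Sg}_p\le C\norm{g}_p$ on $\R^n$ yield $\norm{M_S^{\mathrm{id}}h}_{L^p(\R^{n+1})}\le C\norm{h}_{L^p(\R^{n+1})}$ for every $h$; applying this with $h=f\circ\tilde\rho$ gives $\norm{M_S^{\rho}f}_p\le C\norm{f}_p$ uniformly in $\rho$.

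Finally, since $SO(n_k+1)$ carries a probability measure, Minkowski's integral inequality applied to the nonnegative kernel $\rho\mapsto M_S^{\rho}f$ gives
\[
\norm{M_{S^+}f}_p\le\int_{SO(n_k+1)}\norm{M_S^{\rho}f}_p\dd\rho\le C\norm{f}_p,
\]
which is \eqref{eq:lemmasec3final}. Apart from the rotation identity this is bookkeeping; the only mildly delicate points are the reduction of $M_S^{\mathrm{id}}$ to $M_S$ on $\R^n$ through the frozen extra coordinate and the measurability of $\rho\mapsto M_S^{\rho}f$ needed for Minkowski's inequality, the latter being clear since for $f\in\mS$ the map $\rho\mapsto M_S^{\rho}f(x)$ is lower semicontinuous. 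I do not anticipate a genuine obstacle: the lemma says precisely that Stein's rotation argument passes through a single coordinate block of the product.
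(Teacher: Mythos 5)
Correct, and essentially the same as the paper's proof: you implement Stein's method of rotations on the $k$-th factor, the only cosmetic difference being that you average over $SO(n_k+1)$ while the paper averages over poles $u\in S^{n_k}$ and the associated equatorial spheres $S^{n_k-1}_u$. The paper leaves the Fubini/frozen-coordinate reduction from $M_{S_u}$ on $\R^{n+1}$ to the hypothesis on $\R^{n}$ implicit, whereas you spell it out; that step is correct and fills in exactly what the terse phrase ``$\|M_{S^{n_1-1}_u}f\|_p=\|M_{S^{n_1-1}}f\|_p\le C\|f\|_p$'' elides.
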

\begin{proof}
  We can assume $k=1$. For any $u \in S^{n_1}$, denote by
  \begin{equation*}
    S^{n_1-1}_u \ce \{ x \in S^{n_1} : x \perp u \}
  \end{equation*}
  the rotated $(n_1-1)$-dimensional spheres in $\R^{n_1+1}$, and let
  $\sigma^u$ be the surface measure of $S^{n_1-1}_u$ so that
  $\sigma^u(S^{n_1-1}_u) = |S^{n_1-1}|$. Furthermore, let $\sigma_1^+$
  be the surface measure of $S^{n_1}$. Define a new measure $\mu$ on
  $S^{n_1}$ by putting for every Lebesgue-measurable set $A \subset
  \R^{n_1+1}$
  \begin{equation*}
    \mu(A) \ce \int_{S^{n_1}} \int_{S^{n_1-1}_u} \chi_A(\omega) \dd
    \sigma^u(\omega) \dd \sigma_1^+(u).
  \end{equation*}
  By \cite{ste}, we have $\mu = |S^{n_1-1}| \cdot \sigma_1^+$, and
  \begin{equation*}
    \| M_{S^{n_1-1}_u} f \|_p = \| M_{S^{n_1-1}} f \|_p \le C \|f\|_p,
  \end{equation*}
  where $S_u = S^{n_1-1}_u \times S_2 \times \cdots \times
  S_\ell$. Hence we can calculate 
  \begin{IEEEeqnarray*}{rClr}
    M_{S^+}f(x)
    &=& \frac1{|S^+|} \sup_{t>0} \frac1{|S^{n_1-1}|} \int_{S^{n_1}}
    \int_{S^{n_1-1}_u} \int_{S_2} \cdots \int_{S_\ell} | f (x +
    t(\omega_1, \ldots \omega_\ell)) |& \\
    \IEEEeqnarraymulticol{3}{r}{ \dd \sigma_\ell(\omega_\ell)
      \ldots \dd \sigma_2(\omega_2) }& \dd
    \sigma_1^u(\omega_1) \dd \sigma_1^+(u)\\
    &\le& \frac1{|S^{n_1}|} \int_{S^{n_1}} \frac1{|S|} \sup_{t>0}
    \int_{S^{n_1-1}_u} \int_{S_2} \cdots \int_{S_\ell} | f (x +
    t(\omega_1, \ldots \omega_\ell)) | &\\
    \IEEEeqnarraymulticol{3}{r}{\dd \sigma_\ell(\omega_\ell)
      \ldots \dd \sigma_2(\omega_2) }&\dd
    \sigma_1^u(\omega_1) \dd \sigma_1^+(u) \\
    &=& \frac1{|S^{n_1}|} \int_{S^{n_1}} M_{S_u}f(x) \dd
    \sigma_1^+(u),
  \end{IEEEeqnarray*}
  By Minkowski's integral inequality, we then get
  \begin{equation*}
    \| M_{S^+}f \|_p
    \le \frac1{|S^{n_1}|} \int_{S^{n_1}} \| M_{S_u}f \|_p \dd
    \sigma_1^+(u)
    \le C \| f \|_p.
  \end{equation*}
  This concludes the lemma.
\end{proof}
Now assume that we've already shown Theorem \ref{theo:twopr} and take
$B$ as in Theorem \ref{theo:one}. Fix $1 < p < \infty$ and let $N_0 =
\lceil \frac{p}{p-1} \rceil$. By change of coordinates, we can assume
$B = B' \times B''$, where
\begin{equation*}
  B' = \prod_{n_k \ge N_0} B_k \quad\text{and}\quad
  B'' = \prod_{n_k < N_0} B_k.
\end{equation*}
Then
\begin{equation*}
  \| M_{B} \|_{p \to p} \le \| M_{B'} \|_{p \to p} \cdot \| M_{B''}
  \|_{p \to p}. 
\end{equation*}
Furthermore, assume that for $n_k \ge N_0$, each $B_k$ has radius $1$,
while for $n_k < N_0$, each $B_k$ has volume $1$. Since $N_0$ only
depends on $p$, we get
\begin{equation*}
  \| M_{B''} \|_{p \to p} \le C_p
\end{equation*}
from Proposition \ref{prop:bouN}. Let $S' = \prod_{n_k \ge N_0}
S^{n_k-1}$. Then by Lemma \ref{lemma:mbms} and successive application of
Lemma \ref{lemma:stein}, we obtain
\begin{equation*}
  \| M_{B'} \|_{p \to p} \le \| M_{S'} \|_{p \to p} \le \|
  M_{(S^{N_0-1})^{\ell'}} \|_{p \to p},
\end{equation*}
where $\ell' = |\{k \in \ito \ell : n_k \ge N_0\}|$. But since we can
freely vary the radius of each sphere, Theorem
\ref{theo:twopr} implies 
\begin{equation*}
  \| M_{(S^{N_0-1})^{\ell'}} \|_{p \to p} \le C_{N_0} = C_p.
\end{equation*}
This concludes Theorem \ref{theo:one}.
\section{Higher Fourier derivatives of spherical measures}
\label{part4}
We are left with proving Theorem \ref{theo:twopr}. Let $N > 2$, $p >
\frac N{N-1}$, and $S=(S^{N-1}_R)^\ell$ with $R$ so that $|S|=1$,
i.e. $R^{N-1} = \frac{\Gamma(\nicefrac N2)}{2\pi^{\nicefrac N2}}$.\\
First, we use the approach from \cite{car} to show that we only have
to bound
\begin{equation}
  \label{eq:tonystrick}
  \big\| \sup\limits_{1 \le t \le 2} \int_{S} |f(x + t \omega)| \dd
  \sigma_S(\omega) \big \|_p \le C_{p, N} \| f \|_p
\end{equation}
for $p < 2$. For our setting, we use a different proof, which can be
found in Lemma 6.15 and the argument in subsection 6.5.1 from
\cite{dgm}. This result makes use of Lemma 3 from \cite{bou1}. Since
the proofs only rely on the properties of the respective Fourier
transforms, a short inspection of them shows that these lemmas still
hold when we take finite (signed) Borel measures on $\R^n$ instead of
$L^1$-kernels, in the following sense.
\begin{lemma}[Lemma 3 from \cite{bou1}]
  \label{lemma:l3b1}
  Let $\nu$ be a finite Borel measure on $\R^n$ so that $\hat \nu$ is
  differentiable, and put
  \begin{equation*}
    %\label{eq:alphabeta}
    \alpha_j \ce \sup_{2^j \le |\xi| \le 2^{j+2}}|\hat \nu(\xi)|,\quad
    \beta_j \ce \sup_{2^j \le |\xi| \le 2^{j+2}} |\scp{\nabla\hat \nu(\xi)}\xi|
  \end{equation*}
  for every $j\in\Z$. Then for every $f\in L^2$, we have
  \begin{equation*}
    %\label{eq:l2}
    \big\|\sup_{t>0} |f\ast \nu_t| \big\|_2 \le C\Gamma(\nu)\|f\|_2
  \end{equation*}
  with
  \begin{equation}
    \label{eq:gamma}
    \Gamma(\nu) \ce \sum_{j\in \Z} \alpha_j^{\nicefrac12}
    (\alpha_j^{\nicefrac12}+\beta_j^{\nicefrac12}).
  \end{equation}
\end{lemma}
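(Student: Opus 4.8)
The plan is to run the proof of Lemma~3 in \cite{bou1} essentially unchanged, the point being that that argument \emph{never uses that the kernel is an $L^1$ function}: it uses only that $m\ce\hat\nu$ is a bounded continuous function, that it is differentiable with the size and derivative control recorded by the $\alpha_j$ and $\beta_j$, and that $\nu$ has finite total variation $\norm\nu$, so that $f\mapsto f\ast\nu_t$ is bounded on $L^2$ uniformly in $t>0$ (with norm $\le\norm\nu$) and $\widehat{f\ast\nu_t}(\xi)=\hat f(\xi)\,m(t\xi)$. For a finite signed Borel measure all of these are automatic once one observes $\abs{m(\xi)}\le\norm\nu$; moreover we may assume $\Gamma(\nu)<\infty$ (otherwise there is nothing to prove), which forces $\alpha_j\to0$ as $j\to\pm\infty$, hence $m(\xi)\to0$ as $\abs\xi\to\infty$, hence $\widehat{f\ast\nu_t}\to0$ in $L^2$ as $t\to\infty$ for every $f\in L^2$.

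First I would fix a nonnegative smooth radial partition of unity $\sum_{k\in\Z}\phi_k\equiv1$ on $\R^n\setminus\{0\}$ with $\phi_k(\xi)=\phi(2^{-k}\xi)$ and $\supp\phi_k\subset\{2^{k-1}\le\abs\xi\le2^{k+1}\}$, set $m_k\ce m\,\phi_k$, let $\nu_k$ be the tempered distribution with $\hat\nu_k=m_k$, and reduce, via $\sup_{t>0}\abs{f\ast\nu_t}\le\sum_k\sup_{t>0}\abs{f\ast(\nu_k)_t}$, to bounding each single scale. Writing $F(s)=f\ast(\nu_k)_s(x)$, using $\widehat{(\nu_k)_s}(\xi)=m_k(s\xi)$ together with $\widehat{\del_s(\nu_k)_s}(\xi)=\scp{\xi}{(\nabla m_k)(s\xi)}=\tfrac1s\,\widehat{(g^{(k)})_s}(\xi)$, where $g^{(k)}$ is defined by $\widehat{g^{(k)}}(\xi)=\scp{\xi}{\nabla m_k(\xi)}$, and the elementary identity $\sup_{s>0}\abs{F(s)}^2\le 2\int_0^\infty\abs{F(s)}\,\abs{F'(s)}\,\dd s$ (valid since $F(\infty)=0$), then Cauchy--Schwarz in $s$ and in $x$ and Plancherel in $x$ yield
\[
  \norm{\sup_{t>0}\abs{f\ast(\nu_k)_t}}_2^2
  \le 2\Big(\int\abs{\hat f(\xi)}^2 I_k(\xi)\,\dd\xi\Big)^{\nicefrac12}
  \Big(\int\abs{\hat f(\xi)}^2 J_k(\xi)\,\dd\xi\Big)^{\nicefrac12},
\]
with $I_k(\xi)=\int_0^\infty\abs{m_k(s\xi)}^2\,\frac{\dd s}{s}$ and $J_k(\xi)=\int_0^\infty\abs{\scp{s\xi}{\nabla m_k(s\xi)}}^2\,\frac{\dd s}{s}$. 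After the substitution $u=s\abs\xi$ both of these become radial integrals over $u\in[2^{k-1},2^{k+1}]$, a set of logarithmic length $\ln4$; on $\supp\phi_k$ one has $\abs{m_k}\le\alpha_{k-1}$, and, by the product rule $\nabla m_k=(\nabla m)\phi_k+m\,\nabla\phi_k$ together with $\abs{\scp{\xi}{\nabla\phi_k(\xi)}}\le C$ (a bound independent of $k$, since $\scp{\xi}{\nabla\phi_k(\xi)}=\scp{2^{-k}\xi}{(\nabla\phi)(2^{-k}\xi)}$), also $\abs{\scp{\xi}{\nabla m_k(\xi)}}\le\beta_{k-1}+C\alpha_{k-1}$ there. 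Hence $I_k\le C\alpha_{k-1}^2$ and $J_k\le C(\alpha_{k-1}+\beta_{k-1})^2$, so that $\norm{\sup_{t>0}\abs{f\ast(\nu_k)_t}}_2\le C\,\alpha_{k-1}^{\nicefrac12}\big(\alpha_{k-1}^{\nicefrac12}+\beta_{k-1}^{\nicefrac12}\big)\norm f_2$; summing over $k\in\Z$ and reindexing gives $\norm{\sup_{t>0}\abs{f\ast\nu_t}}_2\le C\,\Gamma(\nu)\norm f_2$ with $\Gamma(\nu)$ as in \eqref{eq:gamma}.

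The one point that is genuinely new compared with the $L^1$ case is the functional-analytic bookkeeping: one must check that $f\ast\nu_t$ is a well-defined element of $L^2$ for each $t>0$ (clear from $\norm\nu<\infty$), that $x\mapsto\sup_{t>0}\abs{f\ast\nu_t(x)}$ is measurable, and that the fundamental-theorem-of-calculus step may legitimately be applied to $F$. I expect this to be the main, though routine, obstacle. It is handled by first proving the estimate for $f$ in the dense class $\{f\in L^2:\hat f\in C_c^\infty(\R^n\setminus\{0\})\}$: for such $f$ and each fixed $k$ the curve $s\mapsto f\ast(\nu_k)_s$ is a $C^1$ curve in $L^2$ (and, one checks, pointwise in $x$) vanishing as $s\to\infty$, so the identity for $F$ applies; a standard density argument then removes the restriction on $f$. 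Beyond this, the proof is word for word that of \cite{bou1}, which only ever sees $m=\hat\nu$ and never $\nu$ itself.
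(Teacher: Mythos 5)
Your proposal is correct and simply fills in the very thing the paper only asserts: the paper dispenses with this lemma by remarking that a short inspection of the proof of Lemma~3 in \cite{bou1} shows it never uses the $L^1$-kernel hypothesis, only the finiteness of $\norm\nu$ and the Fourier-side bounds encoded in $\alpha_j,\beta_j$. Your reconstruction of that inspection — dyadic decomposition $m_k=m\phi_k$, the fundamental-theorem-of-calculus $g$-function bound $\sup_s|F(s)|^2\le 2\int_0^\infty|F|\,|F'|\,\dd s$, Cauchy--Schwarz and Plancherel, the $\alpha_{k-1},\beta_{k-1}$ controls on each annulus via the product rule and scale invariance of $\scp{\xi}{\nabla\phi_k(\xi)}$, and the density argument to legitimize the calculus step — is precisely the argument the paper has in mind and is carried out correctly.
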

\begin{lemma}[Lemma 6.15 from \cite{dgm}]
  \label{lemma:dgm615}
  Let $\nu$ be a finite Borel measure on $\R^n$ and $K \in L^1(\R^n)$
  such that $\hat \nu$ and $\hat K$ are both differentiable. Assume
  there is a constant $C$ so that for every $\theta \in S^{n-1}$ and
  every $u \in \R^*$
  \em
  \begin{IEEEeqnarray}{rCl}
    | \hat \mu(u \theta) | &\le& C \cdot \min\{|u|,
    |u|^{-1}\}, \label{eq:erstedgm} \\
    | \scp{\theta}{\nabla \hat \mu(u \theta)} | &\le& C \cdot
    \min\{1, |u|^{-1}\} \label{eq:zweitedgm}
  \end{IEEEeqnarray}
  \em
  with $\mu = \nu$, $K$.
  Then we have
  \begin{equation}
    \label{eq:carbend}
    \Gamma(\nu \ast K_{2^k}) \le C' 2^{-\nicefrac{|k|}2}
  \end{equation}
  for every $k \in \Z$, with $C'$ only depending on $C$ and
  $\Gamma(\nu \ast K_{2^k})$ as in \eqref{eq:gamma}.
\end{lemma}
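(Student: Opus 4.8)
The plan is to unwind $\Gamma(\nu\ast K_{2^k})$ by means of the identity $\widehat{\nu\ast K_{2^k}}(\xi)=\hat\nu(\xi)\,\hat K(2^k\xi)$ (with the convention $K_t(x)=t^{-n}K(x/t)$, so that $\widehat{K_t}(\xi)=\hat K(t\xi)$) and to estimate the two families of shell suprema
\[
  \alpha_j=\sup_{2^j\le|\xi|\le 2^{j+2}}|\widehat{\nu\ast K_{2^k}}(\xi)|,\qquad
  \beta_j=\sup_{2^j\le|\xi|\le 2^{j+2}}|\scp{\xi}{\nabla\widehat{\nu\ast K_{2^k}}(\xi)}|
\]
one dyadic annulus at a time, using only the pointwise bounds \eqref{eq:erstedgm}--\eqref{eq:zweitedgm} for $\nu$ and $K$, and then to sum the resulting geometric series in $j$.

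First I would record the consequences of the hypotheses on the annulus $2^j\le|\xi|\le 2^{j+2}$. Writing $\theta=\xi/|\xi|$ and applying \eqref{eq:erstedgm} with $u=|\xi|$ to $\nu$ and with $u=2^k|\xi|$ to $K$ gives, up to an absolute constant absorbed into $C$,
\[
  |\hat\nu(\xi)|\le C\min\{2^j,2^{-j}\},\qquad |\hat K(2^k\xi)|\le C\min\{2^{j+k},2^{-j-k}\}.
\]
Since $\nabla_\xi\big(\hat K(2^k\xi)\big)=2^k(\nabla\hat K)(2^k\xi)$, one has $\scp{\xi}{\nabla_\xi\hat K(2^k\xi)}=2^k|\xi|\,\scp{\theta}{(\nabla\hat K)(2^k|\xi|\theta)}$, so \eqref{eq:zweitedgm} yields $|\scp{\xi}{\nabla_\xi\hat K(2^k\xi)}|\le C\min\{2^{j+k},1\}$, and likewise $|\scp{\xi}{\nabla\hat\nu(\xi)}|\le C\min\{2^j,1\}$. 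The Leibniz rule applied to $\widehat{\nu\ast K_{2^k}}$ then produces
\[
  \alpha_j\le C^2\min\{2^j,2^{-j}\}\min\{2^{j+k},2^{-j-k}\},
\]
\[
  \beta_j\le C^2\min\{2^j,1\}\min\{2^{j+k},2^{-j-k}\}+C^2\min\{2^j,2^{-j}\}\min\{2^{j+k},1\}.
\]

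Next I would feed these bounds into $\Gamma(\nu\ast K_{2^k})=\sum_{j\in\Z}\alpha_j+\sum_{j\in\Z}\alpha_j^{\nicefrac12}\beta_j^{\nicefrac12}$ and sum shell by shell. The $\nu$-factor $\min\{2^j,2^{-j}\}$ peaks at $j=0$ while the $K$-factor $\min\{2^{j+k},2^{-j-k}\}$ peaks at $j=-k$; across the $|k|+1$ shells lying between $j=0$ and $j=-k$ the product of these two factors is identically $2^{-|k|}$, and off this range each such product decays geometrically in $|j|$ or $|j+k|$ and sums to $\mO(2^{-|k|})$. Hence $\sum_j\alpha_j\le C'(|k|+1)2^{-|k|}$. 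For the second sum I would bound $\beta_j^{\nicefrac12}$ by the sum of the square roots of its two summands and estimate each resulting cross-term the same way: after extracting the square root, the product of the surviving factors is either the constant $2^{-|k|}$ across the intervening shells, or decays through them (at ratio $\sqrt2$) from a maximum of $2^{-|k|/2}$, so each cross-term sums to at most $C'(|k|+1)2^{-|k|/2}$. Since the sequence $(|k|+1)2^{-|k|/2}$ is bounded, all three pieces are $\le C'2^{-|k|/2}$, which is precisely \eqref{eq:carbend}, with $C'$ depending only on $C$.

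The step I expect to be the main obstacle is obtaining the exponent $2^{-|k|/2}$ rather than something weaker. It is tempting to bound $\sum_j\alpha_j^{\nicefrac12}\beta_j^{\nicefrac12}\le\big(\sum_j\alpha_j\big)^{\nicefrac12}\big(\sum_j\beta_j\big)^{\nicefrac12}$ by Cauchy--Schwarz, but $\sum_j\beta_j$ is only $\mO(1)$ in $k$ --- the summand $\min\{2^j,2^{-j}\}\min\{2^{j+k},1\}$ has a plateau of height $1$ near $j=0$ --- so this route would give only $2^{-|k|/4}$. The sharp rate comes out solely from the termwise analysis, whose decisive feature is that the product of a $\nu$-factor (peaking at $|\xi|\approx1$) and a $K$-factor (peaking at $|\xi|\approx2^{-k}$) is essentially the constant $2^{-|k|}$ over the $\approx|k|$ intervening dyadic shells, together with the elementary fact that $|k|\,2^{-|k|}\lesssim 2^{-|k|/2}$.
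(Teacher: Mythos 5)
Your argument is correct and is the natural dyadic-shell computation one would expect behind this estimate. A point of context first: the paper itself does not prove Lemma~\ref{lemma:dgm615}; it cites it as Lemma~6.15 of \cite{dgm} (and only remarks that the proof in \cite{dgm}, written for $L^1$-kernels, transfers verbatim to finite Borel measures since it uses nothing beyond the Fourier-side hypotheses). So there is no in-paper proof to compare to, but your reconstruction is the argument one would give. The key steps all check out: $\widehat{\nu\ast K_{2^k}}(\xi)=\hat\nu(\xi)\hat K(2^k\xi)$, the chain rule gives $\scp{\xi}{\nabla_\xi \hat K(2^k\xi)}=2^k|\xi|\,\scp{\theta}{(\nabla\hat K)(2^k|\xi|\theta)}$, Leibniz splits $\beta_j$ into the two cross-terms, and on the shell $2^j\le|\xi|\le 2^{j+2}$ the hypotheses give $\alpha_j\lesssim 2^{-|j|}2^{-|j+k|}$ and $\beta_j\lesssim 2^{-j_-}2^{-|j+k|}+2^{-|j|}2^{-(j+k)_-}$ where $j_-=\max\{-j,0\}$. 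Summing: $\sum_j\alpha_j\lesssim(|k|+1)2^{-|k|}$ (the $|k|+1$ plateau shells between $j=-k$ and $j=0$ dominate), and for the mixed sum, using $\sqrt{x+y}\le\sqrt x+\sqrt y$ and handling each cross-term over the three regimes $j<\min\{0,-k\}$, the plateau, and $j>\max\{0,-k\}$ gives $\sum_j\alpha_j^{1/2}\beta_j^{1/2}\lesssim(|k|+1)2^{-|k|}+2^{-|k|/2}\lesssim 2^{-|k|/2}$. This is precisely \eqref{eq:carbend}.

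One small calibration in your final remark: with the sharper intermediate bound $\sum_j\alpha_j\lesssim(|k|+1)2^{-|k|}$ (rather than the weaker $2^{-|k|/2}$), Cauchy--Schwarz combined with $\sum_j\beta_j=\mathcal O(1)$ in fact yields $\mathcal O\big(\sqrt{|k|+1}\,2^{-|k|/2}\big)$, not $2^{-|k|/4}$ --- still strictly short of $2^{-|k|/2}$, so your conclusion that the termwise analysis is needed to hit the stated rate stands, but the loss through Cauchy--Schwarz is only logarithmic, not a halving of the exponent.
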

Let $P$ be the Poisson kernel, i.e. $\hat P(\xi) = e^{-|\xi|}$. We
will show later that the Borel measure $\sigma_S - P \dd x$
satisfies \eqref{eq:erstedgm} and \eqref{eq:zweitedgm}. By Stein's
maximal theorem for semigroups, $\| \sup_{t>0} P_t \ast f \|_q <
C\|f\|_q$ for $1<q<\infty $, and one can take $\hat K(\xi) =
e^{-|\xi|}-e^{-2|\xi|}$ in Lemma \ref{lemma:dgm615} to proceed as in
\cite{dgm}, getting the strong $L^2$-boundedness property from
Carbery's proof as required, and being left with showing
\eqref{eq:tonystrick}. Here, we use part (ii) of the proposition in
\cite{car}, which also holds for any finite Borel measure with bounded
Fourier transform. For this, we need to consider fractional
derivatives. For a finite Borel measure $\nu$ on $\R^n$ and $z \in
\C$, denote the fractional derivative of $\hat \nu$ of order $z$ by
\begin{equation}
  \label{eq:fracderi}
  (\scp \xi \nabla)^z \hat \nu(\xi) = \bigg(\frac{\dd}{\dd r}\bigg)^z
  \hat \nu(r\xi)\big|_{r=1} = \int (2\pi i \scp x
  \xi)^z e^{2\pi i \scp x \xi} \dd \nu(x)
\end{equation}
whenever the right hand side is well-defined. Let
\begin{equation*}
  m(\xi) = \widehat{ \sigma_S}(\xi).
\end{equation*}
According to the proposition in
\cite{car}, we need to show that there is $1 > \alpha >
\nicefrac1p$ so that the fractional derivative $\scp \xi \nabla
^\alpha m$ has bounded $L^p$-multiplier norm independent of $\ell
$. Our basic idea will be to estimate the fractional derivatives of
$m$ of order $z$ with $\Re z = 0$ as $L^1$-multipliers, and with $\Re
z = \frac{N-1}2$ as $L^2$-multipliers, followed by applying Stein's
interpolation theorem. It turns out that $\frac{N-1}2$ is the best
possible upper bound on $\Re z$ for that estimate, and that we can
also establish \eqref{eq:erstedgm} and \eqref{eq:zweitedgm} while
bounding these fractional derivatives. However, we encounter some
technical difficulties like in \cite{mue}. To deal with these, we
introduce the Riesz fractional derivative of a function $f \colon
\mathopen] 0, 2 \mathclose] \to \C$, defined as 
\begin{equation*}
  I^{-z}f(t) = \frac{-1}{\Gamma(-z)} \int_t^2 (u-t)^{-z-1}f(u) \dd u,
\end{equation*}
for $\Re z < 0$ and $0 < t \le 2$. This operator can be extended
analytically to the complex plane. For any $k \in \N_{>0}$, assume
that $f$ as above is $k$ times differentiable and let $\Re z <
k$. Then
\begin{equation}
  \label{eq:rieszfd}
  I^{-z}f(t) = E_{k,f}(z, t) + (-1)^k \frac1{\Gamma(k-z)} \int_t^2
  (u-t)^{-z+k-1}f^{(k)}(u) \dd u,
\end{equation}
where
\begin{equation}
  \label{eq:ekzt}
  E_{k,f}(z,t) = \sum_{j=0}^{k-1} (-1)^j \frac{(2-t)^{-z+j}
    f^{(j)}(2)}{\Gamma(j+1-z)}.
\end{equation}
It follows that
\begin{equation*}
  I^{-k}f(t) = (-1)^kf^{(k)}(t)
\end{equation*}
if $f$ is $k$ times differentiable.
Now consider the holomorphic family of multipliers $(m_z)_{z \in \C}$
defined by
\begin{equation}
  m_z(\xi) \ce I^{-z}m(t \xi) \big\vert_{t=1}.
\end{equation}
From Müller's work (see also Lemma 7.3 in \cite{dgm}), it follows that
for every $0 < \alpha < 1$,
\[
  m_\alpha(\xi) - (\scp \xi \nabla)^\alpha m(\xi)
\]
is an $L^q$-multiplier for $1 \le q \le \infty $, bounded by
$\frac{1}{\Gamma(1-\alpha)}$. Thus we only need to bound the
$m_z$, and from \eqref{eq:rieszfd} and \eqref{eq:ekzt}, it follows
that we need to bound the usual derivatives $\frac{\dd^k}{\dd r^k}
m(r\xi)$ for $1 \le r \le 2$.\\\\
Let us give a quick idea of our approach to bound these derivatives.
We have
\begin{equation*}
  m(\xi) = \prod_{j=1}^\ell \tilde m(\zeta_j),
\end{equation*}
writing again $\xi = (\zeta_1, \ldots, \zeta_\ell)$ with each $\zeta_j
\in \R^N$, and
\begin{equation*}
  \tilde m(\zeta) \ce \widehat{ \sigma_{S^{N-1}_R}}(\zeta).
\end{equation*}
By the general Leibniz formula, we have
\begin{equation}
  \label{eq:leibnizder}
  \bigg( \frac{\dd}{\dd r} \bigg)^k m(r\xi)
  = \sum_{\substack{\alpha \in \N^\ell \\ |\alpha| = k}} {k \choose
    \alpha} \prod_{j=1}^\ell \bigg( \frac{\dd}{\dd r}
  \bigg)^{\alpha_j} \tilde m(r \zeta_j).
\end{equation}
It turns out that the well known bound $\tilde m(\zeta) =
\mO(|\zeta|^{-\frac{N-1}2})$ as $|\zeta| \to \infty$ extends to
$\frac{\dd^\alpha}{\dd r^\alpha} \tilde m(r \zeta) =
\mO(|\zeta|^{\alpha-\frac{N-1}2})$, $|\zeta| \to \infty$. Hence we can
get a good estimate on the factors where $|\zeta_j|$ is sufficiently
large. One can also use the integral representation of Bessel
functions to see that for $k > 0$, $\frac{\dd^k}{\dd r^k}
\tilde m(r \zeta)$ is close to $0$ if $\zeta$ is close to $0$.\\
However, we also need get some estimate for the derivatives of $\tilde
m$ if $\zeta$ is neither sufficiently close to $0$, nor sufficiently
large. All the necessary estimates are established in the following
lemma.
\begin{lemma}
  \label{lemma:derisofsphere}
  % For each $\alpha \in \N_{>0}$, $r \ge 0$, and $\zeta \in \R^N$, we have 
  % \begin{equation}
  %   \label{eq:derisofsphere}
  %   \bigg(\frac{\dd}{\dd r}\bigg)^\alpha \tilde m(r\zeta) = 2
  %   \pi^{\alpha+1} r^{-\frac{N-2}2} R^{\frac N2 + \alpha} |\zeta|^{-\frac
  %     {N-2}2+\alpha} \sum_{k=0}^\alpha |\pi r R \zeta|^{-k}
  %   \frac{\Gamma(\frac{N-2}2+k)}{\Gamma(\frac{N-2}2)} {\alpha \choose
  %     k} B_{\alpha, k}(r, \zeta),
  % \end{equation}
  % with
  % \begin{equation*}
  %   B_{\alpha, k}(r, \zeta) = \sum_{j=0}^{\alpha-k} (-1)^{j + k}
  %   {{\alpha -k} \choose j} J_{\frac{N-2}2 - \alpha + k + 2j}(2 \pi r
  %   R |\zeta|).
  % \end{equation*}
  For each $\alpha \in \N$ and
  $\zeta \in \R^N$, the following estimates hold.
  \begin{enumerate}[(i)]
  \item There are constants $C_{\alpha, N}, \tilde C_{\alpha, N}$ such
    that for every $|\zeta| \ge C_{\alpha, N}$ and every $r \in
    \mathopen[ 1, 2 \mathclose]$
    \begin{equation*}
      %\label{eq:derisbd1}
      \bigg|\bigg(\frac{\dd}{\dd r}\bigg)^\alpha \tilde
      m(r\zeta)\bigg|
      \le \tilde C_{\alpha, N} ( 2\pi R
      |\zeta|)^{-\frac{N-1}2+\alpha}.
    \end{equation*}
  \item If $0 \le 2\pi R |\zeta| \le C$ for some constant $C > 1$,
    then there is $c> 0$, depending only on $C$, such that for
    every $r \in \mathopen[ 1, 2 \mathclose]$, we have
    \begin{equation*}
      |\tilde m(r\zeta)| \le e^{-c (2\pi R |\zeta|)^2}.
    \end{equation*}
  \item If $\alpha > 0$ and $1 \le 2\pi R |\zeta| \le C$ for some
    constant $C > 1$, then for every $r \in \mathopen[ 1, 2
    \mathclose]$, we have 
    \begin{equation*}
      %\label{eq:derisbd2}
      \bigg|\bigg(\frac{\dd}{\dd r}\bigg)^\alpha \tilde
      m(r\zeta)\bigg|
      \le C' (2\pi R |\zeta|)^{2\alpha}
      \le (2\pi R |\zeta|)^{2\alpha} \cdot e^{-c'(2\pi R |\zeta|)^2} ,
    \end{equation*}
    with $0< C' < 1$, $C'$ only depending on $C$, and $c'$ chosen so
    that $C' \le e^{-c' C^2}$.
  \item If $\alpha > 0$ and $2\pi R |\zeta| < 1$, then there is
    $C' <1$ such that for $1 \le r \le 2$,
    \begin{equation*}
      %\label{eq:derisbd3}
      \bigg|\bigg(\frac{\dd}{\dd r}\bigg)^\alpha \tilde
      m(r\zeta)\bigg|
      \le C' (2\pi R |\zeta|)^2.
    \end{equation*}
  \end{enumerate}
\end{lemma}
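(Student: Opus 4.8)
The plan is to reduce all four estimates to one–variable bounds for a Bessel-type function. Set $\nu\ce\tfrac{N-2}2$ and $u\ce 2\pi R\abs\zeta$. From the explicit formula for the Fourier transform of spherical measure,
\[
  \tilde m(r\zeta)=\widehat{\sigma_{S^{N-1}_R}}(r\zeta)=c_N\,g(ru),\qquad g(s)\ce s^{-\nu}J_\nu(s),
\]
where the constant $c_N=2^\nu\Gamma(\nu+1)$ is forced by $\tilde m(0)=1$ (which holds since $\abs{S^{N-1}_R}=1$). For the bounded range it is more convenient to use, equivalently, the averaged-cosine representation
\[
  \tilde m(r\zeta)=\frac1{Z_N}\int_{-1}^1\cos(ru\,t)\,(1-t^2)^{\frac{N-3}2}\dd t,\qquad Z_N\ce\int_{-1}^1(1-t^2)^{\frac{N-3}2}\dd t,
\]
which exhibits $\tilde m(r\zeta)$ as an average of cosines against a probability measure on $[-1,1]$; in particular $\abs{\tilde m(r\zeta)}\le 1$, with equality only at $\zeta=0$. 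Since $\big(\tfrac{\dd}{\dd r}\big)^\alpha\tilde m(r\zeta)=c_N\,u^\alpha g^{(\alpha)}(ru)$ and $r\in[1,2]$ gives $u\le ru\le 2u$, everything follows from size bounds on $g^{(\alpha)}$ in the two regimes $ru\gg 1$ and $ru\lesssim 1$.

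For (i) the crucial point — the one that makes the later Carbery-type interpolation push up to $\Re z=\tfrac{N-1}2$, in the spirit of Müller's argument — is that differentiation does not degrade the decay of $g$. Writing $g^{(\alpha)}(s)=\sum_{i=0}^\alpha\binom\alpha i\big(s^{-\nu}\big)^{(i)}J_\nu^{(\alpha-i)}(s)$, using $\big(s^{-\nu}\big)^{(i)}=\mO\big(s^{-\nu-i}\big)$ and the fact that each $J_\nu^{(k)}$ is a finite linear combination of Bessel functions $J_{\nu+m}$, $\abs m\le k$, and hence $\mO\big(s^{-1/2}\big)$ for $s\ge 1$, one gets $\abs{g^{(\alpha)}(s)}\le D_{\alpha,N}\,s^{-\nu-1/2}$ for $s\ge 1$. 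Substituting $s=ru$ and using $ru\ge u\ge 1$ yields $\big|\big(\tfrac{\dd}{\dd r}\big)^\alpha\tilde m(r\zeta)\big|\le c_ND_{\alpha,N}\,u^\alpha(ru)^{-\nu-1/2}\le\tilde C_{\alpha,N}\,u^{\alpha-\frac{N-1}2}$, which is (i) once $C_{\alpha,N}$ is chosen so that $\abs\zeta\ge C_{\alpha,N}$ forces $u\ge 1$.

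For (ii)–(iv) one works in the range $u\le C$, where $ru\in[0,2C]$ is bounded. For (ii), the Taylor expansion $g(s)/g(0)={}_0F_1\big(;\tfrac N2;-\tfrac{s^2}4\big)=1-\tfrac{s^2}{2N}+\mO(s^4)$ gives $\tilde m(r\zeta)=1-\tfrac{(ru)^2}{2N}+\mO\big((ru)^4\big)\le e^{-u^2/(4N)}$ once $u$ is small, while on the remaining compact annulus $\{\eps_0\le u\le C,\ 1\le r\le 2\}$ the averaged-cosine form gives $\abs{\tilde m(r\zeta)}\le 1-\delta$ for some $\delta>0$; so it suffices to shrink $c$ to $\min\{\tfrac1{4N},\,C^{-2}\log\tfrac1{1-\delta}\}$. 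For (iii) and (iv) with $\alpha\ge1$, differentiating the averaged-cosine representation under the integral sign yields
\[
  \Big(\tfrac{\dd}{\dd r}\Big)^\alpha\tilde m(r\zeta)=\frac{u^\alpha}{Z_N}\int_{-1}^1 t^\alpha\cos^{(\alpha)}(ru\,t)\,(1-t^2)^{\frac{N-3}2}\dd t,
\]
whence $\big|\big(\tfrac{\dd}{\dd r}\big)^\alpha\tilde m(r\zeta)\big|\le c_{\alpha,N}\,u^\alpha$ with $c_{\alpha,N}=Z_N^{-1}\int_{-1}^1\abs t^\alpha(1-t^2)^{\frac{N-3}2}\dd t<1$ (strictly, since $\abs t^\alpha<1$ on $(-1,1)$). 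On the range of (iii) we have $u\ge1$, so $u^\alpha\le u^{2\alpha}$, and combining with $u\le C$ and choosing $c'$ with $c_{\alpha,N}e^{c'C^2}\le1$ converts this into $u^{2\alpha}e^{-c'u^2}$. On the range of (iv) we have $u<1$: for $\alpha\ge2$ the bound $c_{\alpha,N}u^\alpha\le c_{\alpha,N}u^2$ already finishes it, while for $\alpha=1$ the integral estimate points the wrong way, and instead one uses $g'(s)=-s^{-\nu}J_{\nu+1}(s)$ together with the observation that $h(s)\ce s^{-\nu-1}J_{\nu+1}(s)$ is positive and decreasing on $(0,2)$ (because $N\ge3$ forces $\nu+1\ge\tfrac32$, so the first zeros $j_{\nu+1,1},j_{\nu+2,1}$ exceed $2$), hence $\abs{g'(ru)}=(ru)\,h(ru)\le 2u\,h(0^+)$ and $\big|\tfrac{\dd}{\dd r}\tilde m(r\zeta)\big|\le 2c_Nh(0^+)\,u^2=\tfrac2N u^2$, with $\tfrac2N<1$ as $N\ge3$.

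The genuinely delicate step is (i): one must see that the sharp decay exponent $\tfrac{N-1}2$ of $\widehat{\sigma_{S^{N-1}_R}}$ survives every differentiation $\big(\tfrac{\dd}{\dd r}\big)^\alpha$ — only the $i=0$ term above decays exactly like $s^{-\nu-1/2}$, and this is sharp, which is precisely why $\tfrac{N-1}2$ will be the best possible upper bound on $\Re z$ in the subsequent estimate of the $m_z$. The bounded-range estimates (ii)–(iv) are more routine, but require care to keep all constants strictly below $1$ (the $C'<1$ in (iii) and (iv)), since these factors get multiplied across the $\ell$ variables via the Leibniz formula \eqref{eq:leibnizder}; the single place where the naïve integral bound is insufficient is the first derivative in (iv), where one must exploit that $g'(0)=0$.
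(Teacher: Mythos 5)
Your proof is correct, and parts (i)--(iii) follow essentially the same route as the paper's: expand $g(s)=s^{-\nu}J_{\nu}(s)$ by Leibniz and use the $O(s^{-1/2})$ decay of each $J_{\nu+m}$ for (i), the power-series/compactness dichotomy for (ii), and the differentiated averaged-cosine representation for (iii). In (iii) you are in fact slightly cleaner than the paper: you observe directly that
\[
  Z_N^{-1}\int_{-1}^{1}\lvert t\rvert^{\alpha}(1-t^{2})^{\frac{N-3}{2}}\,\mathrm{d}t<1
\]
because $\lvert t\rvert^{\alpha}<1$ on $(-1,1)$, whereas the paper evaluates the integral as a ratio of Gamma functions and then invokes Stirling's inequalities to push the ratio below $1$; the two evaluations coincide, but your one-line observation replaces a short computation.

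Part (iv) is where you genuinely diverge from the paper, and I think your route is the nicer one. The paper returns to the power series for $\tilde m$, differentiates it termwise, factors out $(2\pi R\lvert\zeta\rvert)^{2\lfloor(\alpha+1)/2\rfloor}$, and then runs a case distinction on the parity of $\alpha$, controlling each resulting alternating series by inspecting its leading terms (obtaining the constants $\tfrac{2}{N}$ for odd $\alpha$ and $\tfrac{5}{3N}$ for even $\alpha$). You instead split on the size of $\alpha$: for $\alpha\ge 2$ the cosine-integral bound from (iii) already gives $c_{\alpha,N}u^{\alpha}\le c_{\alpha,N}u^{2}$ with $c_{\alpha,N}<1$, and the only place that bound fails is $\alpha=1$, which you repair with the Bessel identity $g'(s)=-s^{-\nu}J_{\nu+1}(s)$ and the monotonicity of $h(s)=s^{-\nu-1}J_{\nu+1}(s)$ on $(0,2]$ (valid since $j_{\nu+1,1},j_{\nu+2,1}>2$), giving the sharp constant $2c_{N}h(0^{+})=\tfrac{2}{N}$. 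This isolates the one real obstruction — the vanishing $g'(0)=0$ must be exploited at order one — instead of distributing the work across a parity case split, and it avoids the somewhat delicate termwise estimates the paper needs for even $\alpha$. Both proofs yield constants strictly below $1$, which, as you note, is exactly what the $\ell$-fold Leibniz product in Lemma \ref{lemma:44} requires. I verified your constant $c_{N}=2^{\nu}\Gamma(\nu+1)$ (from $R^{N-1}=\Gamma(N/2)/(2\pi^{N/2})$), the identity $2c_{N}h(0^{+})=1/(\nu+1)=2/N$, and the claim that $h$ is positive and decreasing on $(0,2]$; everything checks out.
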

\begin{proof}
  (i) \enskip We will show that
  \begin{equation}
    \label{eq:derisofsphere}
    \bigg(\frac{\dd}{\dd r}\bigg)^\alpha \tilde m(r\zeta)
    = 2 \pi^{\alpha+1} r^{-\frac{N-2}2} R^{\frac N2 + \alpha}
    |\zeta|^{-\frac {N-2}2+\alpha} \sum_{k=0}^\alpha {\alpha \choose
      k} |\pi r R \zeta|^{-k} B_{\alpha, k}(2\pi r R |\zeta|),
  \end{equation}
  with
  \begin{equation}
    \label{eq:balphak}
    B_{\alpha, k}(t) =
    \frac{\Gamma(\frac{N-2}2+k)}{\Gamma(\frac{N-2}2)}
    \sum_{j=0}^{\alpha-k} (-1)^{j + k} {{\alpha -k} \choose j}
    J_{\frac{N-2}2 - \alpha + k + 2j}(t).
  \end{equation}
  Since
  \begin{equation*}
    \tilde m(\zeta) = 2\pi R^{\frac N2} |\zeta|^{-\frac{N-2}2}
    J_{\frac{N-2}2}(2\pi R |\zeta|),
  \end{equation*}
  we obtain
  \begin{equation}
    \label{eq:bloed}
    \bigg(\frac{\dd}{\dd r}\bigg)^\alpha \tilde m(r\zeta)
    = (2\pi)^{\frac N2} R^{N-1+\alpha} |\zeta|^\alpha
    \bigg(\frac{\dd}{\dd t}\bigg)^\alpha \big[ (2\pi t)^{-\frac{N-2}2}
      J_{\frac{N-2}2}(2\pi t) \big]\bigg|_{t=rR|\zeta|}.
  \end{equation}
  It is well-known that for every $\nu \in \R$, we have
  \begin{equation*}
    \frac{\dd}{\dd t} J_\nu(t) = \frac12(J_{\nu-1}(t) - J_{\nu+1}(t)),
  \end{equation*}
  which extends to
  \begin{equation*}
    %\label{eq:schloemilch}
    \bigg(\frac{\dd}{\dd t}\bigg)^\alpha J_\nu(t) = \frac1{2^\alpha}
    \sum_{j=0}^\alpha (-1)^j {\alpha \choose j} J_{\nu - \alpha +2j}(t).
  \end{equation*}
  By the Leibniz formula, we thus get
  \begin{IEEEeqnarray*}{rCl}
    \bigg(\frac{\dd}{\dd t}\bigg)^\alpha \big[t^{-\nu}J_\nu(t)\big]
    &=& \sum_{k=0}^\alpha (-1)^k {\alpha \choose k}
    \frac{\Gamma(\nu+k)}{\Gamma(\nu)} t^{-\nu-k} J_\nu^{(\alpha-k)}(t)
    \\
    &=& \sum_{k=0}^\alpha \sum_{j=0}^{\alpha-k} 2^{-\alpha+k}
    (-1)^{j+k} {\alpha \choose k} \frac{\Gamma(\nu+k)}{\Gamma(\nu)}
    t^{-\nu-k} {{\alpha - k} \choose j} J_{\nu-\alpha+k+2j}(t),
    %\label{eq:schlanw}
  \end{IEEEeqnarray*}
  which we can insert into \eqref{eq:bloed}. For $\nu=\frac{N-2}2$,
  combining this equation with \eqref{eq:balphak} yields
  \begin{equation*}
    \bigg(\frac{\dd}{\dd t}\bigg)^\alpha \big[ (2\pi t)^{-\frac{N-2}2}
    J_{\frac{N-2}2}(2\pi t) \big]
    = (2\pi)^\alpha \sum_{k=0}^\alpha 2^{-\alpha+k} {\alpha \choose k}
    (2\pi t)^{-\frac{N-2}2 +k} B_{\alpha, k}(2\pi t).
  \end{equation*}
  Thus \eqref{eq:bloed} gives us
  \begin{IEEEeqnarray*}{rCl}
    \bigg(\frac{\dd}{\dd r}\bigg)^\alpha \tilde m(r\zeta)
    &=& (2\pi)^{\frac N2 + \alpha} R^{N-1+\alpha} |\zeta|^\alpha
    \sum_{k=0}^\alpha 2^{-\alpha+k} {\alpha \choose k} (2\pi rR
    |\zeta|)^{-\frac{N-2}2 +k} B_{\alpha, k}(2\pi rR|\zeta|) \\
    &=& 2 \pi^{\alpha+1} r^{-\frac{N-2}2} R^{\frac N2 + \alpha}
    |\zeta|^{-\frac {N-2}2+\alpha} \sum_{k=0}^\alpha {\alpha \choose
      k} |\pi r R \zeta|^{-k} B_{\alpha, k}(2\pi r R |\zeta|),
  \end{IEEEeqnarray*}
  which is \eqref{eq:derisofsphere}. Since $R$ depends
  only on $N$, all the parameters in \eqref{eq:derisofsphere} depend
  only on $N$ and $\alpha$. Since for every half-integer $\nu$,
  $J_\nu(t) = \mO(t^{-\nicefrac12})$ as $t \to \infty$,
  (i) follows for each fixed $r \in \mathopen[1,
  2\mathclose]$, hence by continuity uniformly in $r$.\\
  (ii) \enskip From the series expansion for Bessel functions
  \eqref{eq:bessel}, we get 
  \begin{IEEEeqnarray*}{rCl}
    \tilde m(\zeta)
    &=& 2\pi R^{\frac N2}
    \frac{|\zeta|^{-\frac{N-2}2}}{\Gamma(\nicefrac12)} (\pi R
    |\zeta|)^{\frac{N-2}2} \sum_{k=0}^\infty (-1)^k
    \frac{\Gamma(k+\nicefrac12)}{\Gamma(k + \nicefrac N2)} \frac{(2\pi
      R |\zeta|)^{2k}}{(2k)!} \\
    &=& \sum_{k=0}^\infty (-1)^k \frac{(2\pi R |\zeta|)^{2k}}{(2k)!}
    \prod_{j=0}^{k-1} \frac{j+\nicefrac12}{j+\nicefrac N2}.
    \yesnumber \label{eq:expdinge} 
  \end{IEEEeqnarray*}
  Hence we can apply an argument similar to \eqref{eq:2ndderis} to see
  that
  \begin{equation*}
    |\tilde m(\zeta)| \le e^{-c'(2\pi R |\zeta|)^2}
  \end{equation*}
  holds for some $c'>0$ if $|\zeta|$ is close to
  $0$, say $2 \pi R|\zeta|<\eps$ for some $\eps > 0$. Otherwise, the
  integral representation of Bessel functions gives us 
  \begin{equation}
    \label{eq:derisofsphere2}
    \tilde m(\zeta)
    = \frac{\Gamma(\nicefrac N2)}{\Gamma(\frac{N-1}2) \sqrt \pi}
    \int_{-1}^1 e^{2 \pi i R |\zeta| t} (1- t^2)^{\frac{N-3}2} \dd t.
  \end{equation}
  But we have 
  \begin{IEEEeqnarray*}{rCl}
    \bigg| \int_{-1}^1 e^{2 \pi i R |\zeta| t} (1-
    t^2)^{\frac{N-3}2} \dd t \bigg| 
    &\le& 2 \int_0^1 | e^{2 \pi i R |\zeta| t} (1-
    t^2)^{\frac{N-3}2} | \dd t \\
    &\le& 2 \int_0^1 (1-t^2)^{\frac{N-3}2} \dd t \\
    &=& \frac{\Gamma(\nicefrac12) \Gamma(\frac{N-1}2)}
    {\Gamma(\nicefrac{N}2)},
  \end{IEEEeqnarray*}
  with equality holding if and only if $\zeta = 0$. Combining this
  with \eqref{eq:derisofsphere2} leads to
  \begin{equation*}
    | \tilde m(\zeta) |
    < 1
  \end{equation*}
  for $\zeta \neq 0$, implying $|\tilde m(\zeta)| \le C' < 1$ for
  $\eps \le 2 \pi R |\zeta| \le 2C$. Choosing a positive $c < c'$
  such that $C' \le e^{-c \cdot 4C^2}$ implies
  \begin{equation*}
    |\tilde m(r\zeta)| \le e^{-c(2\pi R |\zeta|)^2}
  \end{equation*}
  for $2\pi R|\zeta| \le C$ and $1 \le r \le 2$. \\
  (iii) \enskip
  Similarly as in (ii), we get
  \begin{equation}
    \label{eq:derisofsphere3}
    \bigg(\frac{\dd}{\dd r}\bigg)^\alpha \tilde m(r\zeta)
    = \frac{\Gamma(\nicefrac N2)}{\Gamma(\frac{N-1}2) \sqrt \pi} (2
    \pi i R |\zeta|)^\alpha \int_{-1}^1 e^{2 \pi i R r |\zeta| t}
    t^\alpha (1- t^2)^{\frac{N-3}2} \dd t.
  \end{equation}
  But we have 
  \begin{IEEEeqnarray*}{rCl}
    \bigg| \int_{-1}^1 e^{2 \pi i R r |\zeta| t} t^\alpha (1-
    t^2)^{\frac{N-3}2} \dd t \bigg|
    &\le& 2 \int_0^1 t^\alpha (1-t^2)^{\frac{N-3}2} \dd t \\
    &=& \frac{\Gamma(\frac{\alpha+1}2) \Gamma(\frac{N-1}2)}
    {\Gamma(\frac{N+\alpha}2)}.
  \end{IEEEeqnarray*}
  Together with \eqref{eq:derisofsphere3}, this gives us
  \begin{equation*}
    \bigg| \bigg(\frac{\dd}{\dd r}\bigg)^\alpha \tilde m(r\zeta)
    \bigg|
    \le (2 \pi R |\zeta|)^\alpha \frac{\Gamma(\frac{\alpha+1}2)
      \Gamma(\nicefrac{N}2)} {\Gamma(\nicefrac12)
      \Gamma(\frac{N+\alpha}2)}.
  \end{equation*}
  Hence (iii) follows if we can estimate
  \begin{equation*}
    \frac{\Gamma(\frac{\alpha+1}2)
      \Gamma(\nicefrac{N}2)} {\Gamma(\nicefrac12)
      \Gamma(\frac{N+\alpha}2)} < 1,
  \end{equation*}
  But this can be established using Stirling's formula: For each $x >
  0$, we have
  \begin{equation*}
    \bigg( \frac{2\pi}x \bigg)^{\nicefrac12} \bigg( \frac xe \bigg)^x
    \le \Gamma(x)
    \le \bigg( \frac{2\pi}x \bigg)^{\nicefrac12} \bigg( \frac xe
    \bigg)^x e^{\frac1{12x}}.
  \end{equation*}
  Thus
  \begin{IEEEeqnarray*}{rCl}
    \frac{\Gamma(\frac{\alpha+1}2) \Gamma(\nicefrac{N}2)}
    {\Gamma(\nicefrac12) \Gamma(\frac{N+\alpha}2)}
    &\le& \frac1{\sqrt \pi} \bigg( \frac{\pi(\alpha +
      N)}{N(\alpha+1)} \bigg)^{\nicefrac12} \bigg( \frac{\alpha+1}{2e}
    \bigg)^{\frac{\alpha+1}2} \bigg( \frac{N}{2e} \bigg)^{\frac{N}2}
    \bigg( \frac{\alpha+N}{2e} \bigg)^{-\frac{\alpha+N}2}
    e^{\frac{1}{6(\alpha+1)}+\frac1{6N}} \\ 
    &\le& \bigg( \frac{\alpha + N}{N(\alpha+1)} \bigg)^{\nicefrac12}
    \bigg( \frac{\alpha+1}{2e} \bigg)^{\frac{\alpha+1}2} \bigg( \frac{N}{2e}
    \bigg)^{\frac{N}2} \bigg( \frac{\alpha+N}{2e}
    \bigg)^{-\frac{\alpha+N}2} e^{\frac1{12}+\frac1{18}} \\
    &=& \frac1{\sqrt 2} \bigg( \frac{\alpha+1}2 \bigg)^{\frac\alpha2}
    \bigg( \frac N2 \bigg)^{\frac{N-1}2} \bigg( \frac{\alpha+N}2
    \bigg)^{-\frac{\alpha+N-1}2} e^{\frac1{12}+\frac1{18}-\frac12} \\
    &<& \frac{e^{-\nicefrac13}}{\sqrt2}.
  \end{IEEEeqnarray*}
  (iv) \enskip By deriving the series expansion \eqref{eq:expdinge} in
  the proof of (ii), we get
  \begin{equation*}
    % \label{eq:smallesti}
    \bigg(\frac{\dd}{\dd r}\bigg)^\alpha \tilde m(r\zeta)
    = (2 \pi R |\zeta|)^{2\lfloor \frac{\alpha+1}2 \rfloor} \sum_{2k
      \ge \alpha} (-1)^k r^{2k-\alpha} \frac{(2\pi R
      |\zeta|)^{2(k-\lfloor \frac{\alpha+1}2 \rfloor)}}{(2k-\alpha)!}
    \prod_{j=0}^{k-1} \frac{j+\nicefrac12}{j+\nicefrac N2}.
  \end{equation*}
  Since $\alpha > 0$, we use that
  \begin{equation*}
    (2 \pi R |\zeta|)^{2\lfloor \frac{\alpha+1}2 \rfloor} \le (2 \pi R
    |\zeta|)^2,
  \end{equation*}
  since $2 \pi R |\zeta| < 1$. To conclude the proof, we have to
  estimate 
  \begin{equation}
    \label{eq:echtdasbloedeste}
    \sum_{k= \lfloor \frac{\alpha+1}2 \rfloor}^\infty (-1)^k
    r^{2k-\alpha} \frac{t^{2(k-\lfloor \frac{\alpha+1}2
        \rfloor)}}{(2k-\alpha)!} \prod_{j=0}^{k-1}
    \frac{j+\nicefrac12}{j+\nicefrac N2}
    \le C' < 1
  \end{equation}
  for $t = 2 \pi R |\zeta| < 1$. If $\alpha$ is odd, then $\lfloor
  \frac{\alpha+1}2 \rfloor = \frac{\alpha+1}2$, and we get
  \begin{equation}
    \label{eq:nichtnocheine}
    \sum_{k= \frac{\alpha+1}2}^\infty (-1)^k r^{2k-\alpha}
    \frac{t^{2(k- \frac{\alpha+1}2)}}{(2k-\alpha)!}
    \prod_{j=0}^{k-1} \frac{j+\nicefrac12}{j+\nicefrac N2}
    = r \sum_{k=0}^\infty (-1)^{k+\frac{\alpha+1}2}
    \frac{(rt)^{2k}}{(2k+1)!} \prod_{j=0}^{k+\frac{\alpha-1}2} \frac{j
      + \nicefrac12}{j + \nicefrac N2}.
  \end{equation}
  But since $rt < 2$, one can use $(rt)^{2k+2} \le 4(rt)^{2k}$ and
  $(2k+3)! \ge 6\cdot (2k+1)!$ for each $k \in \N$ to see that the
  summands of the alternating right hand side series in
  \eqref{eq:nichtnocheine} are strictly decreasing in $k$. Hence
  \begin{equation*}
    \bigg| r \sum_{k=0}^\infty (-1)^{k+\frac{\alpha+1}2}
    \frac{(rt)^{2k}}{(2k+1)!} \prod_{j=0}^{k+\frac{\alpha-1}2} \frac{j
      + \nicefrac12}{j + \nicefrac N2} \bigg|
    \le r \prod_{j=0}^{\frac{\alpha-1}2} \frac{j + \nicefrac12}{j +
      \nicefrac N2} 
    \le \frac2N
    < 1.
  \end{equation*}
  If $\alpha$ is even, then $\lfloor \frac{\alpha+1}2 \rfloor =
  \frac\alpha2$, and similarly, we get
  \begin{equation}
    \label{eq:nichtnochzweine}
    \sum_{k= \nicefrac{\alpha}2}^\infty (-1)^k r^{2k-\alpha}
    \frac{t^{2k- \alpha}}{(2k-\alpha)!}
    \prod_{j=0}^{k-1} \frac{j+\nicefrac12}{j+\nicefrac N2}
    = \sum_{k=0}^\infty (-1)^{k+\nicefrac{\alpha}2}
    \frac{(rt)^{2k}}{(2k)!}
    {\prod_{j=0}^{k+\nicefrac{\alpha}2-1} \frac{j + \nicefrac12}{j +
        \nicefrac N2}}.
  \end{equation}
  Here, a similar argument provides that since $rt < 2$, the summands
  of the alternating right hand side series of
  \eqref{eq:nichtnochzweine} are decreasing from $k \ge 1$. Thus
  \begin{IEEEeqnarray*}{rCl}
    \bigg| \sum_{k=0}^\infty (-1)^{k+\frac{\alpha}2}
    \frac{(rt)^{2k}}{(2k)!} \prod_{j=0}^{k+\nicefrac{\alpha}2-1}
    \frac{j + \nicefrac12}{j + \nicefrac N2} \bigg|
    &\le& \max\bigg\{ \frac1N + \frac{(rt)^4}{4!} \frac1N \frac3{N+2}
    \frac5{N+4}, \frac{(rt)^2}{2!} \frac1N \frac3{N+2} \bigg\} \\
    &\le& \max\bigg\{\frac53 \frac1N, \frac6{N+2} \frac1N \bigg\} \\
    &=& \frac5{3N}
    < 1.
  \end{IEEEeqnarray*}
\end{proof}
\begin{rem}
  The proofs of (iii) and (iv) even work for $0 < r \le 2$, which will
  be necessary to show the conditions in Lemma \ref{lemma:dgm615}.
\end{rem}
Other than the Leibniz formula \eqref{eq:leibnizder}, we need another
form of $\frac{\dd^j}{\dd r^j} m(r\xi)$ to apply the estimates from
Lemma \ref{lemma:derisofsphere}.
\begin{lemma}
  \label{lemma:altform}
  For each $j \in \N$, and $a = (a_1, \ldots, a_j) \in \ito \ell ^j$,
  define the sequence $\alpha(a) \in \N^\ell$ by
  \begin{equation*}
    \alpha(a)_k = |\{j' : a_{j'} = k\}|.
  \end{equation*}
  Then
  \begin{equation}
    \label{eq:altform}
    \bigg(\frac{\dd}{\dd r}\bigg)^j m(r\xi)
    = \sum_{a \in \ito \ell ^j} \prod_{k=1}^\ell \bigg( \frac{\dd}{\dd
      r} \bigg)^{\alpha(a)_k} \tilde m(r \zeta_k).
  \end{equation}
\end{lemma}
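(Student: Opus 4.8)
The statement \eqref{eq:altform} is just the general Leibniz formula \eqref{eq:leibnizder} rewritten with the multinomial coefficients absorbed into a sum over ordered tuples, so the plan is to reduce it to \eqref{eq:leibnizder} by a counting argument. The product $\prod_{k=1}^\ell (\dd/\dd r)^{\alpha(a)_k}\tilde m(r\zeta_k)$ depends on the tuple $a \in \ito \ell^j$ only through the occupation vector $\alpha(a) \in \N^\ell$, and $|\alpha(a)| = \alpha(a)_1 + \cdots + \alpha(a)_\ell = j$ by construction. Hence grouping the terms on the right-hand side of \eqref{eq:altform} according to the value $\alpha = \alpha(a)$ turns it into
\[
\sum_{\substack{\alpha \in \N^\ell \\ |\alpha| = j}} \bigl|\{ a \in \ito \ell^j : \alpha(a) = \alpha \}\bigr| \prod_{k=1}^\ell \bigg(\frac{\dd}{\dd r}\bigg)^{\alpha_k} \tilde m(r\zeta_k).
\]
The elementary fact that $|\{ a \in \ito \ell^j : \alpha(a) = \alpha \}| = {j \choose \alpha} = j!/(\alpha_1!\cdots\alpha_\ell!)$ — one chooses which of the $j$ coordinates of $a$ carry each value $k$ — then identifies this expression, term by term, with the right-hand side of \eqref{eq:leibnizder} (taken with $k$ replaced by $j$), proving the lemma.

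Alternatively, one can prove \eqref{eq:altform} directly by induction on $j$, bypassing \eqref{eq:leibnizder}. The base case $j = 0$ is the definition $m(r\xi) = \prod_{k=1}^\ell \tilde m(r\zeta_k)$, the sum being over the single empty tuple, for which every $\alpha(a)_k = 0$. For the inductive step one differentiates the assumed formula for $(\dd/\dd r)^j m(r\xi)$ and applies the ordinary product rule to each summand $\prod_{k=1}^\ell (\dd/\dd r)^{\alpha(a)_k}\tilde m(r\zeta_k)$; appending to $a \in \ito \ell^j$ the index $k_0 \in \ito \ell$ of the factor that was differentiated defines a bijection $\ito \ell^j \times \ito \ell \to \ito \ell^{j+1}$ under which the occupation vector increases by one in coordinate $k_0$ and is unchanged elsewhere, which is exactly what is needed to reassemble the result as the sum over $\ito \ell^{j+1}$.

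There is no genuine obstacle here: the content is purely the bookkeeping behind the Leibniz rule, and the only point requiring any care is the combinatorial identity in the first route (respectively, checking that appending a last coordinate is a bijection, in the second). We record this form because, unlike \eqref{eq:leibnizder}, it presents $(\dd/\dd r)^j m(r\xi)$ as an unweighted sum of $\ell^j$ products of single-factor derivatives $(\dd/\dd r)^{\alpha(a)_k}\tilde m(r\zeta_k)$, which is the shape into which the pointwise estimates of Lemma \ref{lemma:derisofsphere} plug directly when bounding the derivatives $(\dd/\dd r)^k m(r\xi)$ that enter the subsequent multiplier estimates.
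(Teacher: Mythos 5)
Your proposal is correct, and your second route is precisely the paper's proof: induction on $j$, differentiating the assumed formula with the product rule and identifying the resulting double sum over $(a,k) \in \ito\ell^j \times \ito\ell$ with the sum over $\ito\ell^{j+1}$ via the occupation-vector identity $\alpha((a,k))_{k'} = \alpha(a)_{k'} + \delta_{k,k'}$. Your first route---grouping the tuples $a \in \ito\ell^j$ by their occupation vector $\alpha(a)$ and recognizing the multiplicity as the multinomial coefficient $\binom{j}{\alpha}$---is an equally valid and arguably shorter argument that reduces \eqref{eq:altform} directly to the general Leibniz formula \eqref{eq:leibnizder}; it buys you brevity at the cost of invoking that formula as a black box, whereas the induction the paper uses is self-contained. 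Either way, your closing observation about why this unweighted form is preferable to \eqref{eq:leibnizder} (so that the pointwise estimates of Lemma \ref{lemma:derisofsphere} plug in term by term) is exactly the motivation in the text.
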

\begin{proof}
  We use induction on $j$, where the case $j=0$ is obvious. So assume
  that \eqref{eq:altform} is true for some $j \in \N$. Then
  \begin{IEEEeqnarray*}{rCl}
    \bigg(\frac{\dd}{\dd r}\bigg)^{j+1} m(r\xi)
    &=& \sum_{a \in \ito \ell ^j} \frac{\dd}{\dd r} \bigg[
    \prod_{k=1}^\ell \bigg( \frac{\dd}{\dd r} \bigg)^{\alpha(a)_k}
    \tilde m(r \zeta_k) \bigg] \\
    &=& \sum_{a \in \ito \ell ^j} \sum_{k=1}^\ell \bigg(
    \frac{\dd}{\dd r} \bigg)^{\alpha(a)_k+1} \tilde m(r \zeta_k)
    \prod_{k' \neq k} \bigg( \frac{\dd}{\dd r} \bigg)^{\alpha(a)_{k'}}
    \tilde m(r \zeta_{k'}) \\
    &=& \sum_{a \in \ito \ell ^j} \sum_{k=1}^\ell \prod_{k'=1}^\ell
    \bigg( \frac{\dd}{\dd r} \bigg)^{\alpha(a)_{k'}+ \delta_{k, k'}}
    \tilde m(r \zeta_{k'}).
  \end{IEEEeqnarray*}
  Since for each $a \in \ito \ell ^j$, $k \in \ito \ell$, the sequence
  $(a, k) \in \ito \ell^{j+1}$ has the property
  \begin{equation*}
    \alpha((a, k))_{k'} = \alpha(a)_{k'} + \delta_{k, k'},
  \end{equation*}
  it follows that
  \begin{equation*}
    \bigg(\frac{\dd}{\dd r}\bigg)^{j+1} m(r\xi)
    = \sum_{a \in \ito \ell ^{j+1}} \prod_{k=1}^\ell \bigg( \frac{\dd}{\dd
      r} \bigg)^{\alpha(a)_k} \tilde m(r \zeta_k).
  \end{equation*}
\end{proof}
With this, we can estimate the derivatives of $m$ straight forward.
%%%%%%%%%%%%%%%%%%%%%%%%%%%%%%%%%%%%%%%%%%%%%%%%%
%%%%%%%%%%%%%%%%%%%%%%%%%%%%%%%%%%%%%%%%%%%%%%%%%
\begin{lemma}
  \label{lemma:44}
  Let $K = \lceil \frac{N-1}2 \rceil$. For every $\xi \in \R^n$ and every
  $r \in \mathopen[ 1, 2 \mathclose]$, we have
  \begin{equation}
    \label{eq:deriineed}
    (1+|\xi|)^{\frac{N-1}2-K} \bigg|\bigg(\frac{\dd}{\dd
      r}\bigg)^K m(r\xi)\bigg| \le C_N.
  \end{equation}
\end{lemma}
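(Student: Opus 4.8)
The plan is to differentiate the product $m(r\xi)=\prod_{k=1}^\ell\tilde m(r\zeta_k)$ by the Leibniz rule \eqref{eq:leibnizder}, to estimate each summand with Lemma \ref{lemma:derisofsphere}, and to organise the bookkeeping so that the number of summands of a given shape (which grows only polynomially in $\ell$) is absorbed by the decay of the many factors. Fix $r\in\mathopen[1,2\mathclose]$; all constants below will depend only on $N$, since $K=K(N)$, $R=R(N)$, and every constant supplied by Lemma \ref{lemma:derisofsphere} is uniform in such $r$. Write $g_\alpha(\zeta)\ce\big(\tfrac{\dd}{\dd r}\big)^\alpha\tilde m(r\zeta)$. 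I would first repackage Lemma \ref{lemma:derisofsphere} as follows: there is $\rho_0=\rho_0(N)\ge1$ (large enough to apply (i) for all $\alpha\le K$), together with constants $c,C>0$ and $0<\kappa<1$, such that for $0\le\alpha\le K$ one has, if $|\zeta|\le\rho_0$, the bounds $|g_0(\zeta)|\le e^{-c|\zeta|^2}$ and $|g_\alpha(\zeta)|\le C|\zeta|^2$ for $\alpha\ge1$ (from (ii), (iii), (iv)), while if $|\zeta|\ge\rho_0$ one has $|g_\alpha(\zeta)|\le C(2\pi R|\zeta|)^{\alpha-\frac{N-1}2}$ (from (i)), and in particular $|g_0(\zeta)|\le\kappa$ after enlarging $\rho_0$ so that $\tilde C_{0,N}(2\pi R\rho_0)^{-\frac{N-1}2}<1$. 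Then I split $\{1,\dots,\ell\}$ into the large block $\mathcal L\ce\{k:|\zeta_k|\ge\rho_0\}$, of cardinality $L$, and the small block $\mathcal S\ce\{k:|\zeta_k|<\rho_0\}$, and put $\sigma^2\ce\sum_{k\in\mathcal S}|\zeta_k|^2\le|\xi|^2$.

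The structural observation — this is where $K=\lceil\frac{N-1}2\rceil$ enters — is that each $\alpha\in\N^\ell$ with $|\alpha|=K$ is of one of two types: either exactly one entry is nonzero, and then it equals $K$ (\emph{concentrated} multi-indices), or at least two entries are nonzero, and then each is $\le K-1<\frac{N-1}2$ (\emph{spread} multi-indices). For a spread $\alpha$ with support $P=P_S\sqcup P_L$ ($P_S\subset\mathcal S$, $P_L\subset\mathcal L$), every differentiated factor carries exponent $\alpha_k-\frac{N-1}2<0$ and is thus bounded by $C_N$ (by (i) on $\mathcal L$; with the sharper bound $|g_{\alpha_k}(\zeta_k)|\le C|\zeta_k|^2$ on $\mathcal S$). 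The undifferentiated factors over $\mathcal L\setminus P_L$ contribute at most $\kappa^{L-|P_L|}$, and $\sum_{|P_L|=s_L}\kappa^{L-s_L}=\binom{L}{s_L}\kappa^{L-s_L}$ is bounded uniformly in $L$; the undifferentiated factors over $\mathcal S\setminus P_S$ contribute at most $\exp\big({-}c(\sigma^2-\sum_{k\in P_S}|\zeta_k|^2)\big)\le e^{c|P_S|\rho_0^2}e^{-c\sigma^2}$, and $\sum_{|P_S|=s_S}\prod_{k\in P_S}|\zeta_k|^2=e_{s_S}\big((|\zeta_k|^2)_{k\in\mathcal S}\big)\le\tfrac1{s_S!}\sigma^{2s_S}$. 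Since $\sigma^{2s_S}e^{-c\sigma^2}$ is bounded and there are only finitely many spread shapes, the total contribution of the spread multi-indices is $\le C_N$.

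For a concentrated $\alpha$ the summand is $g_K(\zeta_{k_0})\prod_{k\neq k_0}g_0(\zeta_k)$, and I sum over $k_0$. If $k_0\in\mathcal S$ I use $|g_K(\zeta_{k_0})|\le C|\zeta_{k_0}|^2$ and $\prod_{k\neq k_0}|g_0(\zeta_k)|\le e^{c\rho_0^2}e^{-c\sigma^2}$, so the sum over $k_0\in\mathcal S$ is $\le Ce^{c\rho_0^2}\sigma^2e^{-c\sigma^2}\le C_N$. If $k_0\in\mathcal L$ I use $|g_K(\zeta_{k_0})|\le C(2\pi R|\zeta_{k_0}|)^{K-\frac{N-1}2}$, $\prod_{k\in\mathcal L\setminus\{k_0\}}|g_0(\zeta_k)|\le\kappa^{L-1}$ and $\prod_{k\in\mathcal S}|g_0(\zeta_k)|\le1$; since $K-\frac{N-1}2\in\mathopen[0,\tfrac12\mathclose]$, $|\zeta_{k_0}|\le|\xi|$, and $L\kappa^{L-1}$ is bounded over $L\ge1$, the sum over $k_0\in\mathcal L$ is $\le C_N(1+|\xi|)^{K-\frac{N-1}2}$. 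Combining the two types and using $K-\frac{N-1}2\ge0$ gives $\big|\big(\tfrac{\dd}{\dd r}\big)^K m(r\xi)\big|\le C_N(1+|\xi|)^{K-\frac{N-1}2}$, which is \eqref{eq:deriineed}.

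The estimates of the individual factors are handed to us by Lemma \ref{lemma:derisofsphere}; the real work, and the main obstacle, is the combinatorial bookkeeping, since a naive term-by-term bound would diverge like $\ell^{|P|}$. Making it converge requires using all three decay mechanisms of Lemma \ref{lemma:derisofsphere} simultaneously — the Gaussian decay $|\tilde m(r\zeta)|\le e^{-c|\zeta|^2}$ of undifferentiated factors with small argument, the strict contraction $|\tilde m(r\zeta)|\le\kappa<1$ of those with large argument, and the quadratic vanishing $|g_\alpha(\zeta)|=\mO(|\zeta|^2)$ of differentiated factors near the origin — feeding respectively into the elementary-symmetric-polynomial bound over $\mathcal S$, the binomial-tail bound $\binom{L}{s_L}\kappa^{L-s_L}$ over $\mathcal L$, and the convergence of $\sum_{k_0\in\mathcal S}|\zeta_{k_0}|^2$. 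The choice $K=\lceil\frac{N-1}2\rceil$ is used twice: it makes $\frac{N-1}2-K\le0$, so the right-hand side of \eqref{eq:deriineed} is a nonnegative power of $1+|\xi|$, and it guarantees that every differentiated factor in a spread multi-index has a strictly negative exponent.
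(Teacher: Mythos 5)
Your proof is correct and uses the same core ideas as the paper — split the factor index set by the size of $|\zeta_k|$, expand by the Leibniz rule, invoke the pointwise bounds of Lemma~\ref{lemma:derisofsphere}, and tame the combinatorics through the Gaussian decay of the small factors and the geometric contraction of the large ones — but the bookkeeping is organised differently. You use a two-block split and classify multi-indices globally as concentrated (one entry $=K$) or spread (all entries $\le K-1$, hence with strictly negative exponent under~(i)); the paper instead splits into \emph{three} blocks ($t_k>A_N$, $1\le t_k\le A_N$, $t_k<1$), iterates the Leibniz rule to produce the three separate sub-estimates \eqref{eq:wassolldas}, \eqref{eq:wassolldas2pr}, \eqref{eq:wassolldas3}, and makes the concentrated/spread distinction only inside the large block. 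For the very small block the paper appeals to the sequence form of the Leibniz rule, Lemma~\ref{lemma:altform}, which your elementary-symmetric-polynomial count over supports $P_S\subset\mathcal S$ replaces; and for the middle block the paper exploits the full exponent $(2\pi R|\zeta|)^{2\alpha}$ in Lemma~\ref{lemma:derisofsphere}(iii) to apply the multinomial theorem cleanly, whereas you flatten (iii)--(iv) to the cruder $|g_\alpha(\zeta_k)|\le C_N|\zeta_k|^2$ and again lean on the symmetric-function estimate. Neither simplification costs anything, since the constants lost in the flattening appear only in finitely many ($\le K$) factors. Your version is thus a tidier reorganisation of essentially the same argument, bypassing Lemma~\ref{lemma:altform} and the middle-block multinomial identity.
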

\begin{proof}
  Fix $0 \neq \xi = (\zeta_1, \ldots, \zeta_\ell)$ and put $t = (t_1,
  \ldots, t_\ell)$, $t_k \ce 2\pi R |\zeta_k|$, $1 \le k \le \ell$. By
  Lemma \ref{lemma:derisofsphere} (i), we can find a constant $A_N$
  such that for $|2 \pi R \zeta| > A_N$ and $0 \le j < K$,
  \begin{equation}
    \label{eq:derisbd12}
    \bigg|\bigg(\frac{\dd}{\dd
      r}\bigg)^j \tilde m(r\zeta)\bigg| \le \frac12.
  \end{equation}
  By the Leibniz formula, we get
  \begin{equation}
    \label{eq:varsplitting}
    \bigg|\bigg(\frac{\dd}{\dd r}\bigg)^K m(r\xi)\bigg|
    = \sum_{j=0}^K {K \choose j} \bigg( \frac{\dd}{\dd r} \bigg)^j
    \bigg[ \prod_{t_k \le A_N} \tilde m(r \zeta_k) \bigg] \cdot \bigg(
    \frac{\dd}{\dd r} \bigg)^{K-j} \bigg[ \prod_{t_k > A_N} \tilde m(r
    \zeta_k) \bigg].
  \end{equation}
  By rearranging the variables, we can assume that $\{ k : t_k
  \le A_N \} = \ito{\ell'}$ for some $\ell' \le \ell$.\\
  Assume further that $K = \nicefrac N2$. In that case, we thus have
  to find bounds
  \begin{equation}
    \label{eq:wassolldas}
    (1 + |\xi|)^{-\nicefrac12} \bigg| \bigg( \frac{\dd}{\dd r}
    \bigg)^j \bigg[ \prod_{k=\ell'+1}^\ell \tilde m(r \zeta_k) \bigg]
    \bigg|
    \le C_N 
  \end{equation}
  and
  \begin{equation}
    \label{eq:wassolldas2}
    \bigg| \bigg( \frac{\dd}{\dd r} \bigg)^j \bigg[
    \prod_{k=1}^{\ell'} \tilde m(r \zeta_k) \bigg] \bigg| \le C_N
  \end{equation}
  for each $j \le K$, with $C_N$ being independent of $\ell$ and
  $\ell'$. As of \eqref{eq:wassolldas2}, we shall assume that also $\{
  k : t_k < 1 \} = \ito{\ell''}$ for some $\ell'' \le \ell'$. Then by
  using the Leibniz formula as in \eqref{eq:varsplitting}, instead of
  \eqref{eq:wassolldas2} we have to find bounds
  \begin{equation}
    \tag{\ref*{eq:wassolldas2}${}^\prime$} \label{eq:wassolldas2pr}
    \bigg| \bigg( \frac{\dd}{\dd r} \bigg)^j \bigg[
    \prod_{k=\ell''+1}^{\ell'} \tilde m(r \zeta_k) \bigg] \bigg| \le
    C_N
  \end{equation}
  and
  \begin{equation}
    \label{eq:wassolldas3}
    \bigg| \bigg( \frac{\dd}{\dd r} \bigg)^j \bigg[
    \prod_{k=1}^{\ell''} \tilde m(r \zeta_k) \bigg] \bigg| \le C_N,
  \end{equation}
  $j \le K$, $C_N$ independent of $\ell$, $\ell'$, and
  $\ell''$. One further assumption will be $\ell'' < \ell' < \ell$,
  since otherwise, at least one of our required estimates would be
  trivial. We will now show \eqref{eq:wassolldas}.
  \begin{IEEEeqnarray*}{rCl}
    \bigg(\frac{\dd}{\dd r}\bigg)^j \bigg[ \prod_{k=\ell'+1}^\ell
    \tilde m(r \zeta_k) \bigg]
    &=& \sum_{\substack{\alpha \in \N^{\ell-\ell'}\\ |\alpha|=j}} {j
      \choose \alpha} \prod_{k=1}^{\ell-\ell'} \bigg(\frac{\dd}{\dd
      r}\bigg)^{\alpha_k} \tilde m(r \zeta_{k+\ell'}) \\
    &=& \sum_{\substack{|\alpha|=j \\ \forall k\, \alpha_k < K}} {j
        \choose \alpha} \prod_{k=1}^{\ell-\ell'} \bigg(\frac{\dd}{\dd
        r} \bigg)^{\alpha_k} \tilde m(r \zeta_{k+\ell'}) \\
    && +\> \sum_{\substack{|\alpha|=j \\ \exists k\, \alpha_k = K}} {j
        \choose \alpha} \prod_{k=1}^{\ell-\ell'} \bigg(\frac{\dd}{\dd
        r} \bigg)^{\alpha_k} \tilde m(r
      \zeta_{k+\ell'}). \yesnumber \label{eq:trick17}
    %\bigg(\frac{\dd}{\dd r} \bigg)^j \tilde m(r \zeta_k) \bigg| \cdot
    %\prod_{k' \neq k} | \tilde m(r \zeta_{k'}) |,
  \end{IEEEeqnarray*}
  Due to the choice of $A_N$, \eqref{eq:derisbd12} yields
  \begin{equation}
    \label{eq:blargh}
    (1+ |\xi|)^{-\nicefrac12} \sum_{\substack{|\alpha|=j \\ \forall
        k\, \alpha_k < K}} {j \choose \alpha} \prod_{k=1}^{\ell-\ell'}
    \bigg| \bigg(\frac{\dd}{\dd r} \bigg)^{\alpha_k} \tilde m(r
    \zeta_{k+\ell'}) \bigg|
    \le \sum_{\substack{|\alpha|=j \\ \forall k\, \alpha_k < K}} {j
      \choose \alpha} \frac1{2^{\ell-\ell'}}
    < \frac{(\ell-\ell')^j}{2^{\ell-\ell'}}
    < C.
  \end{equation}
  The second sum in \eqref{eq:trick17} is equal to $0$ if $j <
  K$. If $j=K$, we use part (i) of Lemma \ref{lemma:derisofsphere} to
  get
  \begin{IEEEeqnarray*}{rCl}
    \IEEEeqnarraymulticol{3}{l}{
      (1 + |\xi|)^{-\nicefrac12} \sum_{\substack{|\alpha|=j \\ \exists
        k\, \alpha_k = K}} {j \choose \alpha} \prod_{k=1}^{\ell-\ell'}
    \bigg| \bigg(\frac{\dd}{\dd r} \bigg)^{\alpha_k} \tilde m(r
    \zeta_{k+\ell'}) \bigg| } \\\quad
    &=& (1 + |\xi|)^{-\nicefrac12} \sum_{k=\ell'+1}^\ell \bigg|
    \bigg(\frac{\dd}{\dd r} \bigg)^j \tilde m(r \zeta_k) \bigg| \cdot
    \prod_{\substack{k' > \ell'\\ k' \neq k}} | \tilde m(r \zeta_{k'})
    | \\
    &\le& \sum_{k=\ell'+1}^\ell (1 + |\xi|)^{-\nicefrac12}
    \frac{\tilde C_N |\zeta_k|^{\nicefrac12}}{2^{\ell-\ell'-1}} \\
    &<& \frac{\tilde C_N(\ell-\ell')}{2^{\ell-\ell'-1}}\\
    &<& C_N. \yesnumber \label{eq:blargh2}
  \end{IEEEeqnarray*}
  Combining \eqref{eq:trick17}, \eqref{eq:blargh}, and
  \eqref{eq:blargh2} gives us \eqref{eq:wassolldas}. \\
  %If $j = \frac{N-1}2$, we can estimate analoguously
  %\begin{IEEEeqnarray*}{rCl}
  %  \bigg|\bigg(\frac{\dd}{\dd r}\bigg)^i m(r\xi)\bigg|
  %  &\le& \sum_{\substack{|\alpha|=i \\ \forall k\, \alpha_k < j}} {i
  %      \choose \alpha} \prod_{k=1}^\ell \bigg| \bigg(\frac{\dd}{\dd
  %      r} \bigg)^{\alpha_k} \tilde m(r \zeta_k) \bigg| \\
  %  && +\> \sum_{k=1}^\ell \bigg| \bigg(\frac{\dd}{\dd r} \bigg)^j
  %  \tilde m(r \zeta_k) \bigg| \cdot \prod_{k' \neq k} | \tilde m(r
  %  \zeta_{k'}) |,
  %\end{IEEEeqnarray*}
  %and we can estimate both sums similarly as before. Hence we are left
  %with the case $t_k \le A_N$ for every $k$. As before, we can assume
  %that either $1 \le t_k \le A_N$ for each $k$ or $t_k < 1$ for each
  Coming to \eqref{eq:wassolldas2pr}, let $t' = (t_{\ell''+1}, \ldots,
  t_{\ell'})$. By Lemma
  \ref{lemma:derisofsphere} (ii) and (iii), we get
  \begin{IEEEeqnarray*}{rCl}
    \bigg| \bigg( \frac{\dd}{\dd r} \bigg)^j \bigg[
    \prod_{k=\ell''+1}^{\ell'} \tilde m(r \zeta_k) \bigg] \bigg|
    &=& \sum_{\substack{\alpha \in \N^{\ell'-\ell''}\\ |\alpha|=j}} {j
      \choose \alpha} \prod_{k=1}^{\ell'-\ell''} \bigg|
    \bigg(\frac{\dd}{\dd r}\bigg)^{\alpha_k} \tilde m(r
    \zeta_{k+\ell''}) \bigg| \\
    &\le& \sum_{|\alpha|=j} {j \choose \alpha}
    \prod_{k=1}^{\ell'-\ell''} t_{k+\ell''}^{2\alpha_k} e^{-c_N
      t_{k+\ell''}^2} \\
    &=& e^{-c_N |t'|^2} |t'|^{2i}\\
    &<& C_N.
  \end{IEEEeqnarray*}  
  To show \eqref{eq:wassolldas3}, we apply Lemma \ref{lemma:altform}
  to get
  \begin{equation*}
    \bigg(\frac{\dd}{\dd r}\bigg)^j \bigg[
    \prod_{k=1}^{\ell''} \tilde m(r \zeta_k) \bigg]
    = \sum_{a \in \ito{\ell''}^j} \prod_{k=1}^{\ell''} \bigg(
    \frac{\dd}{\dd r} \bigg)^{\alpha(a)_k} \tilde m(r \zeta_k).
  \end{equation*}
  Let $t'' = (t_1, \ldots, t_{\ell''})$. Lemma
  \ref{lemma:derisofsphere} (ii) and (iv) now lead to
  \begin{equation}
    \label{eq:allerletzteimlemma}
    \bigg|\bigg(\frac{\dd}{\dd r}\bigg)^j m(r\xi)\bigg|
    \le \sum_{a \in \ito{\ell''}^j} e^{- c_N |t''|^2}
    \prod_{\alpha(a)_k \neq 0} t_k^2.
  \end{equation}
  To conclude the estimate, we will partition the set $\ito{\ell''}^j$
  as follows. For each $a \in \ito{\ell''}^j$, we have $|\{k :
  \alpha(a)_k \neq 0\}| \le j$. For $i \in \ito j$, put
  \begin{equation*}
    A_i \ce \{ a \in \ito{\ell''}^j : |\{k : \alpha(a)_k \neq 0\}| = i
    \}.
  \end{equation*}
  Then clearly
  \begin{equation*}
    \sum_{a \in A_i} \prod_{\alpha(a)_k \neq 0} t_k^2
    \le \sum_{k_1, \ldots, k_i = 1}^{\ell''} t_{k_1}^2 \cdots t_{k_i}^2.
  \end{equation*}
  Hence by \eqref{eq:allerletzteimlemma}, we get
  \begin{IEEEeqnarray*}{rCl}
    \bigg(\frac{\dd}{\dd r}\bigg)^j \bigg[
    \prod_{k=1}^{\ell''} \tilde m(r \zeta_k) \bigg]
    &\le& e^{- c_N |t''|^2} \sum_{i=1}^j \sum_{a \in A_i}
    \prod_{\alpha(a)_k \neq 0} t_k^2 \\
    &\le& e^{-c_N |t''|^2} \bigg( \sum_{k_1 = 1}^{\ell''} t_{k_1}^2
    + \sum_{k_1, k_2 = 1}^{\ell''} t_{k_1}^2 t_{k_2}^2 + \ldots +
    \sum_{k_1, \ldots, k_j = 1}^{\ell''} t_{k_1}^2 \cdots t_{k_j}^2 \bigg)
    \\
    &=& e^{- c_N |t''|^2} \sum_{k=1}^j |t''|^{2k}\\
    &<& C_N.
  \end{IEEEeqnarray*} 
  This concludes the proof for $K = \nicefrac N2$. The arguments for
  the case $K = \frac{N-1}2$ are basically the same. We also have to
  estimate \eqref{eq:wassolldas2pr} and \eqref{eq:wassolldas3}, but we
  have to drop the factor $(1+|\xi|)^{-\nicefrac12}$ in
  \eqref{eq:wassolldas}. Then the only difference is that the sum
  \begin{equation*}
    \sum_{k=\ell'+1}^\ell (1 + |\xi|)^{-\nicefrac12}
    \frac{\tilde C_N |\zeta_k|^{\nicefrac12}}{2^{\ell-\ell'-1}}
  \end{equation*}
  in \eqref{eq:blargh2} becomes
  \begin{equation*}
    \sum_{k=\ell'+1}^\ell \frac{\tilde C_N }{2^{\ell-\ell'-1}},
  \end{equation*}
  which makes the estimate even slightly easier.
\end{proof}
Now, we are able to show that $\sigma_S - P \dd x$ fulfills
\eqref{eq:erstedgm} and \eqref{eq:zweitedgm}, which allows us to apply
Carbery's interpolation argument.
\begin{lemma}
  \label{lemma:zweitletzt}
  There is a constant $C_N$ such that for every $\theta \in S^{n-1}$
  and every $u \in \R^*$ \em
  \begin{IEEEeqnarray}{rCl}
    | m(u \theta) - \hat P(u \theta) | &\le& C_N \cdot \min\{|u|,
    |u|^{-1}\}, \label{eq:drittedgm} \\
    | \scp{\theta}{\nabla (m -\hat P)(u \theta)} | &\le& C_N \cdot
    \min\{1, |u|^{-1}\}. \label{eq:viertedgm}
  \end{IEEEeqnarray}\em
\end{lemma}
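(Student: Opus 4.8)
The plan is to prove \eqref{eq:drittedgm} and \eqref{eq:viertedgm} separately for $|u|\le1$ and for $|u|\ge1$. Write $u\theta=(\zeta_1,\dots,\zeta_\ell)$ with $\zeta_k=u\theta^{(k)}\in\R^N$, so that $m(u\theta)=\prod_{k=1}^\ell\tilde m(u\theta^{(k)})$. Since $\hat P(\xi)=e^{-|\xi|}$ we have $\hat P(u\theta)=e^{-|u|}$ and $\scp{\theta}{\nabla\hat P(u\theta)}=-\sgn(u)\,e^{-|u|}$, hence $|\hat P(u\theta)|$ and $|\scp{\theta}{\nabla\hat P(u\theta)}|$ are both $\le e^{-|u|}\le\min\{1,|u|^{-1}\}$. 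Using $\scp{\theta}{\nabla m(u\theta)}=\frac{\dd}{\dd u}m(u\theta)$, it therefore suffices to show: for $|u|\le1$, that $|m(u\theta)-\hat P(u\theta)|\le C_N|u|$ and $\bigl|\frac{\dd}{\dd u}m(u\theta)\bigr|\le C_N$; and for $|u|\ge1$, that $|m(u\theta)|\le C_N|u|^{-1}$ and $\bigl|\frac{\dd}{\dd u}m(u\theta)\bigr|\le C_N|u|^{-1}$. The two ranges overlap at $|u|=1$, where $\min\{|u|,|u|^{-1}\}$ and $\min\{1,|u|^{-1}\}$ are comparable to $1$, so there is no gap.

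For $|u|\le1$ I would use the series \eqref{eq:expdinge}, which gives $\tilde m(\zeta)=1-\frac{(2\pi R)^2}{2N}|\zeta|^2+\mO(|\zeta|^4)$ uniformly for $|\zeta|\le1$. Writing $\tilde m(\zeta_k)=1+a_k$, this yields $|a_k|\le C_N|u|^2|\theta^{(k)}|^2$, hence $\sum_k|a_k|\le C_N|u|^2$ since $\sum_k|\theta^{(k)}|^2=1$, and therefore $|m(u\theta)-1|=\bigl|\prod_k(1+a_k)-1\bigr|\le e^{\sum_k|a_k|}-1\le C_N|u|^2$. Combined with $|\hat P(u\theta)-1|=|e^{-|u|}-1|\le|u|$, this gives $|m(u\theta)-\hat P(u\theta)|\le C_N|u|$. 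For the derivative, write $\tilde m(\zeta)=h(|\zeta|)$ with $h$ smooth and even; Lemma \ref{lemma:derisofsphere}(iv) (equivalently, \eqref{eq:expdinge}) gives $|h'(t)|\le C_N t$ for $0\le t\le1$, so, using $|h|\le1$,
\[
  \Bigl|\tfrac{\dd}{\dd u}m(u\theta)\Bigr|=\Bigl|\tfrac{\dd}{\dd u}\prod_k h(|u|\,|\theta^{(k)}|)\Bigr|\le\sum_k|\theta^{(k)}|\,\bigl|h'(|u|\,|\theta^{(k)}|)\bigr|\le C_N|u|\sum_k|\theta^{(k)}|^2=C_N|u|\le C_N.
\]

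For $|u|\ge1$ the point is the dimension-free decay. First I would establish the \emph{per-factor} bound
\[
  |\tilde m(\zeta)|\le(1+c_N|\zeta|^2)^{-(N-1)/4}\qquad\text{for all }\zeta\in\R^N,
\]
with a suitably small $c_N>0$ and, crucially, constant exactly $1$: on a bounded range of $2\pi R|\zeta|$, Lemma \ref{lemma:derisofsphere}(ii) gives $|\tilde m(\zeta)|\le e^{-c_N(2\pi R|\zeta|)^2}$, and one uses $e^{-x}\le(1+\tfrac{4x}{N-1})^{-(N-1)/4}$; for $2\pi R|\zeta|$ large, Lemma \ref{lemma:derisofsphere}(i) (with $\alpha=0$) gives $|\tilde m(\zeta)|\le C_N(2\pi R|\zeta|)^{-(N-1)/2}$, which is $\le(c_N|\zeta|^2)^{-(N-1)/4}$ once $c_N$ is small; choosing $c_N$ small and the threshold in (i) large reconciles the regimes. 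Then, since $\prod_k(1+c_N|u|^2|\theta^{(k)}|^2)\ge1+c_N|u|^2\sum_k|\theta^{(k)}|^2=1+c_N|u|^2$, we get
\[
  |m(u\theta)|\le(1+c_N|u|^2)^{-(N-1)/4}\le c_N^{-(N-1)/4}|u|^{-(N-1)/2}\le C_N|u|^{-1},
\]
using $N\ge3$. For the radial derivative, put $\xi=u\theta$ and $f(r)=m(r\xi)$ on $[1,2]$, so that $\scp{\theta}{\nabla m(u\theta)}=|u|^{-1}f'(1)$. Then $\|f\|_{\infty,[1,2]}\le C_N/|\xi|$ by the bound just proved (as $|r\xi|\ge|\xi|\ge1$), and $\|f^{(K)}\|_{\infty,[1,2]}\le C_N(1+|\xi|)^{K-(N-1)/2}$ by Lemma \ref{lemma:44} with $K=\lceil\frac{N-1}2\rceil$; the Landau--Kolmogorov interpolation inequality on $[1,2]$ gives
\[
  \|f'\|_\infty\le C_N\bigl(\|f\|_\infty+\|f\|_\infty^{1-1/K}\|f^{(K)}\|_\infty^{1/K}\bigr)\le C_N\bigl(|\xi|^{-1}+|\xi|^{(3-N)/(2K)}\bigr)\le C_N
\]
for $|\xi|\ge1$, since $(3-N)/(2K)\le0$ when $N\ge3$. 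Hence $|\scp{\theta}{\nabla m(u\theta)}|\le C_N|u|^{-1}$, which completes the argument.

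The two steps I expect to be the real work are: (a) pinning down the per-factor bound $|\tilde m(\zeta)|\le(1+c_N|\zeta|^2)^{-(N-1)/4}$ with constant $1$, since it is precisely this that keeps the product estimate for $|m(u\theta)|$ independent of $\ell$; it requires a careful (if elementary) choice of $c_N$ matching the behaviour of $\tilde m$ near $0$, in the bulk, and at infinity, as captured by Lemma \ref{lemma:derisofsphere}. And (b) the radial-derivative estimate for large $|u|$: the naive Leibniz bound $\bigl|\frac{\dd}{\dd u}m(u\theta)\bigr|\le\sum_k|\theta^{(k)}|\,|h'(\cdot)|\prod_{k'\ne k}|h(\cdot)|$ fails to be uniform in $\ell$, which forces one to pass through the higher-order estimate of Lemma \ref{lemma:44} together with an interpolation inequality — the step that genuinely uses the analysis of Section \ref{part4}.
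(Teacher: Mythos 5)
Your proof is correct, and it takes a genuinely different route from the paper's for the $|u|\ge1$ regime. The paper there simply invokes \eqref{eq:bourgaindecay} to get $|u\,m(u\theta)|\le C_N$ and $|\scp{u\theta}{\nabla m(u\theta)}|\le C$; but \eqref{eq:bourgaindecay}, as stated in the introduction, is Bourgain's decay estimate for $\widehat{\chi_B}$ with $B$ an isotropic convex body, and in Section~\ref{part4} $m=\widehat{\sigma_S}$ with $\sigma_S$ a surface measure on a non-convex set (a product of spheres, not the boundary of the product of balls), so the citation as written is at best a shortcut. You argue instead from the tools actually built in Section~\ref{part4}: for the size of $m(u\theta)$ you establish the per-factor bound $|\tilde m(\zeta)|\le(1+c_N|\zeta|^2)^{-(N-1)/4}$ with constant exactly $1$ (reconciling Lemma~\ref{lemma:derisofsphere}(i) and (ii) by tuning $c_N$) and then multiply, exploiting $\prod_k(1+a_k)\ge1+\sum_k a_k$; for the radial derivative you pass through the $K$-th derivative bound in Lemma~\ref{lemma:44} via a Landau--Kolmogorov interpolation on $[1,2]$, since the naive Leibniz expansion would produce an $\ell$-dependent constant. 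This is more self-contained and arguably fills a gap in the paper's citation. For the $|u|\le1$ regime your argument and the paper's are essentially the same (both come down to the quadratic Taylor behaviour of $\tilde m$ and a product estimate), with only cosmetic differences (you expand $\tilde m=1+a_k$ and use $|\prod(1+a_k)-1|\le e^{\sum|a_k|}-1$, the paper uses $|m(u\theta)|\ge e^{-c_Nu^2}$). One small thing to flag on close reading: your near-zero derivative bound $|h'(t)|\le C_N t$ for $0\le t\le1$ is best justified directly from the series \eqref{eq:expdinge} rather than attributed to Lemma~\ref{lemma:derisofsphere}(iv), since (iv) bounds $\partial_r\tilde m(r\zeta)=|\zeta|h'(r|\zeta|)$ rather than $h'$ itself; the series argument you also mention handles it cleanly.
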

\begin{proof}
  We clearly have
  \begin{equation*}
    |u \cdot \hat P(u\theta)| \le C,\quad |1 - \hat P(u\theta)| \le
    C|u|,\quad \text{and} \quad | \scp{\theta}{\nabla\hat P (u
      \theta)} | \le C \cdot \min\{1, |u|^{-1}\}.
  \end{equation*}
  From \eqref{eq:bourgaindecay}, we also know that
  \begin{equation*}
    |u \cdot m(u\theta)| \le C_N.
  \end{equation*}
  We know that $m$ and $\hat P$ are both bounded. Also,
  from \eqref{eq:expdinge}, a similar argument as in
  \eqref{eq:2ndderis} yields that
  \begin{equation*}
    |m(u \theta)| \ge e^{-c_N u^2}
  \end{equation*}
  for $|u| \le 2$ and some $c_N > 0$ chosen sufficiently small. Hence
  \begin{equation*}
    |1 - m(u\theta)| \le 1 - e^{-c_N u^2} \le C_N' u^2 \le C_N |u|
  \end{equation*}
  if $|u| \le 2$, while $|1 - m(u\theta)| < |u|$ is obvious for $|u| >
  2$. Since
  \begin{equation*}
    |m(u \theta) - \hat P(u \theta)| \le |m(u \theta)| + |\hat P(u
    \theta)| \quad \text{and} \quad |m(u \theta) - \hat P(u \theta)| \le 
    |1-m(u \theta)| + |1 - \hat P(u \theta)|,
  \end{equation*}
  \eqref{eq:drittedgm} follows. For \eqref{eq:viertedgm}, it remains
  to show that
  \begin{equation*}
    | \scp{\theta}{\nabla m(u \theta)} | \le C_N \cdot \min\{1,
    |u|^{-1}\},
  \end{equation*}
  If $|u| \ge 1$, this follows from \eqref{eq:bourgaindecay}, since
  \begin{equation*}
    | u \scp{\theta}{\nabla m(u \theta)} |
    = | \scp{u \theta}{\nabla m(u \theta)} |
    \le C.
  \end{equation*}
  Otherwise, assume that $0 < u < 1$. Write $\theta = (\zeta_1,
  \ldots, \zeta_\ell)$ with $\zeta_k \in \R^N$ as before. Since
  $|\theta| = 1$, we can find $C_N$ such that $2 \pi R |\zeta_k| \le
  C_N$ for each $k$. By Lemma \ref{lemma:derisofsphere} (iii) and
  (iv) and the remark after its proof, we have
  \begin{equation*}
    \big| \frac{\dd}{\dd u} \tilde m(u \zeta_k) \big| < (2\pi R
    |\zeta_k|)^2.
  \end{equation*}
  Hence
  \begin{equation*}
    | \scp{\theta}{\nabla m(u \theta)} |
    = \big| \frac{\dd}{\dd u} m(u \theta) \big|
    = \sum_{k=1}^\ell \big| \frac{\dd}{\dd u} \tilde m(u \zeta_k) \big|
    \prod_{k' \neq k} |\tilde m(u \zeta_{k'}) |
    \le \sum_{k=1}^\ell |2 \pi R \zeta_k|^2
    = (2 \pi R)^2.
  \end{equation*}
  This concludes the estimates.
\end{proof}
This leaves us with finding some $\alpha$ with $\nicefrac1p < \alpha <
1$ such that $m_\alpha$ is an $L^p$-multiplier. We can't interpolate
the corresponding multiplier operators of the family $(m_z)_{0 \le \Re
  z \le \frac{N-1}2}$ directly, but interpolation is still
possible. For this, assume that $\frac{N}{N-1} < p < \frac{N-1}{N-2}$,
and let $\nicefrac1p = 1 - \theta + \nicefrac\theta2$, i.e. $\theta = 2 -
\nicefrac2p$. Then $\frac{N-1}2\theta < 1 < \frac N2 \theta$. Fix
$\eps$ such that $0 < \eps < \frac N2 \theta - 1$, and set
\begin{equation*}
  \alpha \ce \frac{N-1}2\theta - \eps.
\end{equation*}
Then $\alpha > 1 - \nicefrac\theta2 = \nicefrac1p$.
If $N$ is odd, $\frac{N-1}2$ is an integer, and one can use the
formulas \eqref{eq:rieszfd} and \eqref{eq:ekzt} to argue as in
Section 7.3 and Lemma 7.5 of \cite{dgm}, leaving us with bounding
\[
  \bigg( \frac{\dd}{\dd r} \bigg)^j m(r \xi)
\]
for $j \in \oto{\frac{N-1}2-1}$, which we technically did not prove
yet. But this is covered by the estimates \eqref{eq:wassolldas},
\eqref{eq:wassolldas2pr}, and \eqref{eq:wassolldas3} in the proof of
Lemma \ref{lemma:44}. We get that $m_z$ is an
$L^1$-multiplier for $\Re z = -\eps$ and an $L^2$-multiplier for
$\Re z = \frac{N-1}2-\eps$ so that the family
\[
(m_{\frac{N-1}2z-\eps})_{0 \le \Re z \le 1}
\]
is analytic and of admissible growth in the sense of Stein's
interpolation theorem. For $p> \frac{N-1}{N-2}$, we can interpolate
with the endpoint $\infty$. This already suffices to prove Theorem
\ref{theo:one}, since one could simply go from $N$ to $N+1$ if $N$ is
even, but for even $N$, Theorem \ref{theo:twopr} still holds. In that
case, we need to interpolate twice, paying further attention to the
upcoming bounds of the multiplier operators. Suppose we have an
analytic family of operators $(T_z)$ with $0 \le \Re z \le 1$ so that
\begin{equation*}
  \|T_{it}\|_{p_0 \to p_0} \le M_0(t) \quad \text{and} \quad
  \|T_{1+it} \|_{p_1 \to p_1} \le M_1(t),
\end{equation*}
and that we have $b < \pi $ such that
\begin{equation*}
  \sup_{t \in \R} e^{-b|t|} \log M_j(t) < \infty,
\end{equation*}
$j = 0, 1$. Then $\|T_\theta\|_{p\to p} \le M(\theta)$ for
$\frac1p = \frac{1-\theta}{p_0} + \frac\theta{p_1}$ and for instance
from \cite[p.~38]{gra}, it follows that
\begin{equation}
  \label{eq:steinint}
  M(t) = \exp\bigg[ \frac{\sin(\pi t)}2 \int_{\R} \bigg(\frac{\log
    M_0(y)}{\cosh(\pi y) - \cos{\pi t}} + \frac{\log
    M_1(y)}{\cosh(\pi y) + \cos{\pi t}} \bigg) \dd y \bigg].
\end{equation}
For $0 < \eps < 1$, we consider the multipliers $m^\eps_z(\xi) =
(1+|\xi|)^{\frac{N-1}2-\eps-z}m_z(\xi)$, similarly as in
\cite{mue}. For $-\eps \le \Re z \le \frac{N}2$, Lemma 7.4 in
\cite{dgm} gives us the precise bounds
\begin{equation*}
  |m^\eps_{r + it}(\xi)| \le C_N \eps^{-1} (1+t^2)^{\frac{N+1}4}
  e^{\frac{\pi t}2},
\end{equation*}
where $C_N$ depends on the bounds of
\[
  (1+|\xi|)^{-\nicefrac12} \bigg( \frac{\dd}{\dd r} \bigg)^j m(r \xi)
\]
for $j \in \oto{\frac{N-1}2-1}$, which again can be achieved using the
arguments in the proof of Lemma \ref{lemma:44}.
Hence, for every $t \in \R$, the family
$(m^\eps_{\frac{N}2z-\eps+it})_z$ of $L^2$-multipliers has admissible
growth for $0 \le \Re z \le 1$, and by interpolation,
\eqref{eq:steinint} implies
\begin{equation*}
  |m_{\frac{N-1}2-\eps + it}(\xi)| \le e^{C_N |t|}.
\end{equation*}
Thus we also have admissible growth for $\Re z = \frac{N-1}2 - \eps$,
$N$ even, and taking $\eps$ and $\alpha$ as above, this proves Theorem
\ref{theo:twopr}.

%\newpage
%
\bibliographystyle{abbrv}
\bibliography{bib}{}

\Addresses
\end{document}